\numberwithin{equation}{section}
\theoremstyle{plain}
\newtheorem{thm}{Theorem}[section]
\newtheorem{cor}[thm]{Corollary}
\theoremstyle{definition}
\newtheorem{defn}[thm]{Definition}
\def\R{\mathcal R}
\def\Q{\Omega}
\def\G{{\mathbb G}}
\def\L{\Delta_{\mathbb{H}}}
\def\Rn{\mathbb{R}^{n}}
\def\H{\mathbb{H}^{n}}
\def\Ls{\mathcal{L}}
\def\N{\mathcal{R}^{\frac{a}{\nu}}}
\def\na{\nabla_{\mathbb{H}}}
\begin{document}

\title [Liouville theorems on the Heisenberg groups]
{Liouville theorems for Kirchhoff-type hypoelliptic Partial Differential Equations and systems. I.  Heisenberg group}

\author[A. Kassymov]{Aidyn Kassymov}
\address{
  Aidyn Kassymov:
  \endgraf
  Department of Mathematics: Analysis, Logic and Discrete Mathematics
  \endgraf
  Ghent University, Belgium
   \endgraf
  and
  \endgraf
  Institute of Mathematics and Mathematical Modeling
  \endgraf
  Almaty, Kazakhstan
  \endgraf
  {\it E-mail address} {\rm kassymov@math.kz} and {\rm aidyn.kassymov@ugent.be}}

  \author[M. Ruzhansky]{Michael Ruzhansky}
\address{
	Michael Ruzhansky:
	 \endgraf
  Department of Mathematics: Analysis, Logic and Discrete Mathematics
  \endgraf
  Ghent University, Belgium
  \endgraf
  and
  \endgraf
  School of Mathematical Sciences
    \endgraf
    Queen Mary University of London
  \endgraf
  United Kingdom
	\endgraf
  {\it E-mail address} {\rm michael.ruzhansky@ugent.be}
}

\author[N. Tokmagambetov]{Niyaz Tokmagambetov}
\address{
  Niyaz Tokmagambetov:
  \endgraf
  Department of Mathematics: Analysis, Logic and Discrete Mathematics
  \endgraf
  Ghent University, Belgium
  \endgraf
  and
  \endgraf
  Al-Farabi Kazakh National University
  \endgraf
   71 Al-Farabi avenue
   \endgraf
   050040 Almaty
   \endgraf
   Kazakhstan
   \endgraf
  {\it E-mail address} {\rm niyaz.tokmagambetov@ugent.be}
  }

\author[B. Torebek]{Berikbol Torebek}
\address{
	Berikbol Torebek:
   \endgraf
  Department of Mathematics: Analysis, Logic and Discrete Mathematics
  \endgraf
  Ghent University, Belgium
  \endgraf
  and
  \endgraf
 Institute of Mathematics and Mathematical Modeling
   \endgraf
  Almaty, Kazakhstan
   \endgraf
  {\it E-mail address} {\rm berikbol.torebek@ugent.be}}

\date{\today}

\thanks{The authors were supported by the FWO Odysseus 1 grant G.0H94.18N: Analysis and Partial Differential Equations. MR was supported in parts by the EPSRC Grant EP/R003025/2 and  by the Methusalem programme of the Ghent University Special Research Fund (BOF) (Grant number 01M01021).}

\keywords{Kohn-Laplacian, blow-up, Kato-type exponent, Fujita-type exponent, Kirchhoff type equation, Heisenberg group, heat equation, wave equation, pseudo-parabolic equation, pseudo-hyperbolic equation.} 

\subjclass[2010]{35R11,	34B10, 35R03.}

\begin{abstract}
In this paper, we show the nonexistence results for the Kirchhoff elliptic, parabolic, and hyperbolic type equations on the Heisenberg groups.  Also, the pseudo-parabolic and pseudo-hyperbolic equations of the Kirchhoff-type are under consideration. To prove these results we use the  test function method. In addition, the analogous results are transferred to the cases of systems. Also, we give some examples of non-local nonlinearities.
\end{abstract}

\maketitle

\tableofcontents
\section{Introduction}

\subsection{Heisenberg groups}
Let us briefly recall the definition of the Heisenberg group. For $n\in\mathbb N$, $\mathbb{H}^{n}:=(\mathbb{R}^{2n+1}, \cdot)$ is called the Heisenberg group if the group law is given by
\begin{equation}
\begin{split}
x \cdot x'=(\xi_{1}+\xi_{2},\tilde{\xi}_{1}+\tilde{\xi}_{2},\tau_{1}+\tau_{2}+2(\xi_{1}\tilde{\xi}_{2}-\xi_{2}\tilde{\xi}_{1})),
\end{split}
\end{equation}
for any $x=(\xi_{1},\tilde{\xi}_{1},\tau_{1})$ and $x'=(\xi_{2},\tilde{\xi}_{2},\tau_{2}),$ where $\xi_{1}, \xi_{2}, \tilde{\xi}_{1}, \tilde{\xi}_{2}\in \Rn$ and $\tau_{1}, \tau_{2}\in \mathbb{R}$.
The family of dilations has the following form
\begin{equation}
\delta_{\lambda}(x):=(\lambda \xi, \lambda \tilde{\xi}, \lambda^{2}\tau),\,\,\,\,\,\forall \lambda>0.
\end{equation}
So, the homogeneous dimension of $\mathbb{H}^{n}$ is $Q=2n+2$ and the topological dimension is $2n+1$.
The Lie algebra $\mathfrak{h}$ of the left-invariant vector fields on the Heisenberg group $\mathbb{H}^{n}$ is spanned by
$$
X_{i}=\frac{\partial}{\partial\xi_{i}}+2\tilde{\xi}_{i}\frac{\partial}{\partial \tau}, \,\,\, Y_{i}=\frac{\partial}{\partial\widetilde{\xi_{i}}}-2\xi_{i}\frac{\partial}{\partial \tau},\,\,\,x=(\xi,\tilde{\xi},\tau),
$$
for $i=1,\ldots,n,$ with their non-zero commutator
$$[X_{i},Y_{i}]=-4\partial_{\tau}.$$
The horizontal gradient on $\mathbb{H}^{n}$ is given by
$$\na:=(X_{1},\ldots,X_{n},Y_{1},\ldots,Y_{n}).$$
So, the Kohn-Laplacian or sub-Laplacian is given by
$$\L:=\sum_{i=1}^{n}\left(X_{i}^{2}+Y_{i}^{2}\right).$$

Finally, we define the (Kaplan) distance on the Heisenberg group by
$$
|x|_{H}^{4}=\tau^{2}+\sum_{i=1}^{n}\left(|\xi|^{2}+|\tilde{\xi}|^{2}\right)^{2},
$$
for $x = (\xi, \tilde{\xi}, \tau)\in \H.$ For further details, we refer to the book \cite{FR16}.

Now let us denote by $S^2_1(\H)$ the Sobolev space  on the Heisenberg group:
\begin{equation}
  S^2_1(\H):=\{u:u\in L^{2}(\H),\,\,|\na u|\in L^{2}(\H)\}.
\end{equation}
Similarly, let us denote $S^{2}_{2}(\H)$ in the following form:
\begin{equation}
  S^{2}_{2}(\H):=\{u:u\in L^{2}(\H),\,\,|\na u|\in L^{2}(\H),\,\,|\L u|\in L^{2}(\H)\}.
\end{equation}

\subsection{Historical background} 
In the celebrated paper \cite{Fuj66}, Fujita studied the following nonlinear heat equation
\begin{equation}\label{EQ: 01}
\begin{cases}
u_{t}(x,t)-\Delta u(x,t)=u^{1+p}(x, t), \,\,\, (x,t)\in \mathbb{R}^{N}\times (0,\infty), \\ u(x,0)=u_{0}(x)\geq0, \,\,\, x\in \mathbb{R}^{N}.
 \end{cases}
\end{equation}
He showed that if $0<p<\frac{2}{N}$ then a solution of the problem \eqref{EQ: 01} blows up in finite time  for  $N>2$ while being globally well-posed for $p>\frac{2}{N}$, causing to call this critical exponent ``the critical exponent of Fujita" or just ``Fujita's exponent". One of the further generalisations of the problem \eqref{EQ: 01} is considering the fractional Laplacian $(-\Delta)^{s}$ instead of the classical one $(-\Delta)$. That is, in \cite{GK99, S75} the authors considered the Cauchy problem
\begin{equation*}
 \begin{cases}
   u_{t}(x,t)+(-\Delta)^{s}u(x,t)=a(x,t)|u(x,t)|^{1+p},\,\,\,(x,t)\in \mathbb{R}^{N}\times (0,\infty),\\
   u(x,0)=u_{0}(x)\geq0,\,\,\,x\in \mathbb{R}^{N},
 \end{cases}
\end{equation*}
for $s>0$.

Consequently, in \cite{PohVer}, V\'{e}ron and Pohozhaev found a critical exponent for the following non-linear diffusion equation with the Kohn-Laplacian on Heisenberg groups
$$
\frac{\partial u(x,t)}{\partial t}-\L u(x,t)=|u(x,t)|^{p},\,\,\,\,\,(x,t)\in\H\times(0,+\infty).
$$

Also, the critical exponents for other equations with the Kohn-Laplacian on the Heisenberg groups were derived in \cite{AAK1, AAK2,DL1,JKS}.
In addition, in \cite{RY} the authors found the Fujita exponent on general unimodular Lie groups. 

In the paper \cite{Kato}, T. Kato showed the nonexistence of the global solutions  for the Cauchy problem for a  non-linear wave equation. This work was a pioneering work in this field for the hyperbolic type equations. The corresponding order is called the Kato exponent.  As a generalisation to the Heisenberg group $\H$, V\'{e}ron and Pohozhaev in \cite{PohVer} considered the following problem
$$ \frac{\partial^{2} u(x,t)}{\partial t^{2}}-\L u(x,t)=|u(x,t)|^{p},\,\,\,\,\,(x,t)\in\H\times(0,+\infty),$$
with the Cauchy data
$$u(x,0)=u_{0}(x),\,\,\,\,u_{t}(x,0)=u_{1}(x),$$
where $p>1$.
Also, in the work \cite{AAK1} a fractional analogue was studied, namely, the following non-linear wave equation 
$$ 
\frac{\partial^{2} u(x,t)}{\partial t^{2}}+(-\Delta_{\mathbb{H}})^{s}u(x,t)=|u(x,t)|^{p},
\,\,\,\,\,(x,t)\in\H\times(0,+\infty),
$$
with the Cauchy data
$$u(x,0)=u_{0}(x),\,\,\,\,u_{t}(x,0)=u_{1}(x),$$
where $(-\Delta_{\mathbb{H}})^{s}$ is the fractional Kohn-Laplacian, $s\in(0,1),\,\,\,p>1$.
The wave equation with nonlinear damping on the Heisenberg and graded groups was studied in \cite{RT18}.  In \cite{Geor1, Geor}, one found a critical exponents for the semilinear heat equation and wave equation with damping term on the Heisenberg group.
Also, comparison principles for heat equations with Rockland operators on general graded groups were considered  in \cite{RS1} and \cite{RY1}.

In \cite{AR73}, under certain assumptions on $f$ for the semilinear equation
\begin{equation}
\label{eqar}
\begin{cases}
-\Delta u=f(x,u),\,\,x\in\Omega\subset \mathbb{R}^{n},\\
u(x)=0,\,\,x\in\partial \Omega,
\end{cases}
\end{equation}
Ambrosetti and Rabinowitz proved the existence of solutions by the mountain pass theorem, leading to a number of extensions and generalisations. Indeed, the authors proved the existence results of the weak solutions of \eqref{eqar} in the model case $f(x,u)=a(x)|u|^{q-1}u$, where $1<q<\frac{n+2}{n-2}$.

By the fibering method the authors of the work \cite{DP97} proved the existence of the weak solutions to
\begin{equation}
\begin{cases}
-\Delta_{p}u(x)=\lambda|u|^{p-2}u+f(x)|u|^{q-2}u,\,\,x\in\Omega\subset \mathbb{R}^{n},\\
u(x)=0,\,\,x\in\partial \Omega,
\end{cases}
\end{equation}
where $p,\lambda,q\in\mathbb{R}$ and $1<p<q<p^{*}=\frac{np}{n-p}$. 

In the work \cite{C95}, the author considered and showed that there is at least one weak solution to the following problem on Heisenberg groups
\begin{equation}
 \begin{cases}
 -\L u+au=|u|^{\frac{Q+2}{Q-2}},\,\,x\in\Omega\subset \H,\\
 u(x)=0,\,\,x\in\partial \Omega.
 \end{cases}
\end{equation}
Also, see \cite{GL}.

An important generalisation of the semilinear elliptic type equations is the Kirchhoff type equations, which arise in the description of nonlinear vibrations of an elastic string. Also, the study of Kirchhoff-type problems has been receiving considerable attention in recent years, see \cite{APS, AP, HY, KTT, KTT1, MR1, MR2,  MR3,  Rad, Rad1} and \cite{XZF}. This interest arises from their contributions to the modeling of many physical and biological phenomena. The main aim of this note is to extend the above results to the Heisenberg group case, or more general cases. Thus, we will consider nonlinear Kirchhoff type problems on the Heisenberg group. 
Note that there is already a number of results related to the non-existence of  solutions to the semilinear equations, mentioning only few of the pioneering works on the Heisenberg group, for instance, \cite{AAK1}, \cite{AAK2}  and  \cite{PohVer}. In the present paper, we will use the test function approach, and new interesting results on the Heisenberg group will be proved. Some results obtained here can be  extended to the general stratified Lie groups and graded Lie groups.


In this paper, we also study the Kirchhoff type nonlinear systems of elliptic, parabolic, and hyperbolic type equations. In all of our equations the coefficient which expresses the Kirchhoff type non-linearity will be denoted by $M$. In each individual case we will ask different properties on $M$. But the following particular cases could be a few examples of the function $M$:

    a) $M=1$,
	
    b) $M=\frac{1}{1+a\int_{\H}|\na u|^{2}dx+b\int_{\H}|\L u|^{2}dx}$, where $a,b$ are some real constants.
	
    c) $M=(1+c|t|^{\gamma})^{-\beta}$, where $ c, \gamma$ are some real constants and $\beta>0$.

    d)  $M=\exp(-(a\int_{\H}|\na u|^{2}dx+b\int_{\H}|\L u|^{2}dx))$, where $a,b$ are real constants.

    e) $M=\log\left(\frac{a+b\int_{\H}|\na u|^{2}dx+c\int_{\H}|\L u|^{2}dx}{a_{1}+b_{1}\int_{\H}|\na u|^{2}dx+c_{1}\int_{\H}|\L u|^{2}dx}\right)$, $a,b,c,a_{1},b_{1},c_{1}$ are some real constants.

\section{Kirchhoff type elliptic equations and systems}
\label{S-E}

In this section we find the critical exponents for the Kirchhoff type elliptic equations on the Heisenberg group.

\subsection{Elliptic type equations}
Let us consider the following Kirchhoff type elliptic equation
\begin{equation}\label{ell}
-M\left(\int_{\H}|\na u(x)|^{2}dx,\int_{\H}|\L u(x)|^{2}dx\right) \L u(x)= |u(x)|^{p},   \,\, x \in \H,\,\, p > 1,
\end{equation}
where $u\in C^{2}(\H)$ and $M:\mathbb{R}_{+}\times\mathbb{R}_{+}\rightarrow\mathbb{R}$ is a bounded function such that
\begin{equation}\label{Mel}
0<M(\cdot,\cdot)\leq C_{0}.
\end{equation}

Let us introduce a notion of the weak solution to the equation \eqref{ell}.
\begin{defn}\label{defnweakel}
We say that $u\in S^{2}_{2}(\H)\cap L^{p}(\H)$ with $p>1$ is a  weak solution of the Kirchhoff type elliptic equation \eqref{ell} on $\H$, if the following identity
\begin{equation*}
\int_{\H}|u|^p\varphi dx
=  - \int_{\H}M\left(\int_{\H}|\na u|^{2}dx,\int_{\H}|\L u|^{2}dx\right) u\L \varphi dx
\end{equation*}
holds true, for any test function $0\leq\varphi\in C_{0}^{2}(\H)$.
\end{defn}

Then we have the following result.
\begin{thm}\label{ellthm}
Assume that the function $M(\cdot,\cdot)$ satisfies \eqref{Mel} and
\begin{equation}
1<p\leq\frac{Q}{Q-2},
\end{equation}
with $Q=2n+2$ the homogeneous dimension of $\H$.
Then the problem \eqref{ell} does not have a global non-trivial weak solution.
\end{thm}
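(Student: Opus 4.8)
The plan is to use the classical test-function (nonlinear capacity) method of Mitidieri--Pohozaev, adapted to the Heisenberg group. Suppose, for contradiction, that $u$ is a global nontrivial weak solution. Fix a cut-off function $\phi\in C_0^\infty(\mathbb{R})$ with $0\le\phi\le 1$, $\phi(s)=1$ for $|s|\le 1$ and $\phi(s)=0$ for $|s|\ge 2$, and for $R>0$ set
\begin{equation*}
\varphi_R(x):=\phi^{2p'}\!\left(\frac{|x|_H^4}{R^4}\right),
\end{equation*}
where $p'=p/(p-1)$ is the H\"older conjugate; raising $\phi$ to a high power guarantees that $|\L\varphi_R|^{p'}/\varphi_R^{p'-1}$ stays bounded near the boundary of the support. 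Plugging $\varphi_R$ into the weak formulation gives
\begin{equation*}
\int_{\H}|u|^p\varphi_R\,dx=-\int_{\H} M(\cdots)\,u\,\L\varphi_R\,dx\le C_0\int_{\H}|u|\,|\L\varphi_R|\,dx,
\end{equation*}
using $0<M(\cdot,\cdot)\le C_0$. The first main step is thus to record this inequality and to reduce the problem to estimating the right-hand integral.

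Next I would apply H\"older's inequality with exponents $p$ and $p'$ on the support of $\L\varphi_R$:
\begin{equation*}
\int_{\H}|u|\,|\L\varphi_R|\,dx\le\left(\int_{\H}|u|^p\varphi_R\,dx\right)^{1/p}\left(\int_{\H}\varphi_R^{-p'/p}\,|\L\varphi_R|^{p'}\,dx\right)^{1/p'}.
\end{equation*}
The key technical point is the scaling estimate for the second factor. Because $|x|_H$ is $\delta_\lambda$-homogeneous of degree $1$ and $\L$ is homogeneous of degree $2$ (i.e. $\L(v\circ\delta_\lambda)=\lambda^2(\L v)\circ\delta_\lambda$), a change of variables $x=\delta_R(y)$ — whose Jacobian is $R^Q$ with $Q=2n+2$ — shows that
\begin{equation*}
\int_{\H}\varphi_R^{-p'/p}\,|\L\varphi_R|^{p'}\,dx = R^{\,Q-2p'}\int_{\H}\phi_1^{-p'/p}\,|\L\phi_1|^{p'}\,dy =: C\,R^{\,Q-2p'},
\end{equation*}
where $\phi_1$ is the $R=1$ cut-off and the integral $C$ is finite precisely because of the chosen power $2p'$ (so that no negative power of $\phi$ survives where $\nabla\phi\neq 0$); one needs here that $|x|_H$ is smooth away from the origin, which it is. Combining the last three displays and writing $I_R:=\int_{\H}|u|^p\varphi_R\,dx$ yields
\begin{equation*}
I_R\le C_0\,C^{1/p'}\,I_R^{1/p}\,R^{(Q-2p')/p'},
\end{equation*}
hence $I_R\le C'\,R^{(Q-2p')/p'\cdot p/(p-1)}=C'\,R^{(Q-2p')/(p-1)}$ after absorbing $I_R^{1/p}$.

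Finally I would analyze the exponent $\theta:=\dfrac{Q-2p'}{p-1}$. The hypothesis $p\le\frac{Q}{Q-2}$ is equivalent to $p'=\frac{p}{p-1}\ge\frac{Q}{2}$, i.e. $Q-2p'\le 0$, so $\theta\le 0$. In the strict subcritical range $p<\frac{Q}{Q-2}$ one has $\theta<0$, so letting $R\to\infty$ forces $I_R\to 0$, and since $\varphi_R\uparrow 1$ and $|u|^p\ge 0$, monotone convergence gives $\int_{\H}|u|^p\,dx=0$, i.e. $u\equiv 0$, a contradiction. In the critical case $p=\frac{Q}{Q-2}$ one has $\theta=0$, so $\int_{\H}|u|^p\,dx<\infty$; then the estimate $\int_{\H}|u|\,|\L\varphi_R|\,dx\le C\big(\int_{\{R\le|x|_H\le 2R\}}|u|^p\,dx\big)^{1/p}$ (keeping the H\"older integral restricted to the annular support of $\L\varphi_R$) shows the right-hand side tends to $0$ as $R\to\infty$ by absolute continuity of the integral, and the same monotone-convergence argument again yields $u\equiv 0$. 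The main obstacle I anticipate is the bookkeeping in the critical case — making sure the limiting argument is run on the annulus where $\L\varphi_R$ is supported rather than on all of $\H$, since only that refinement closes the borderline exponent; the subcritical case is routine once the homogeneity-based scaling of the capacity term is set up correctly.
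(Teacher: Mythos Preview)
Your argument is correct and follows essentially the same test-function/rescaling strategy as the paper (the paper uses the homogeneous polynomial $|\xi|^4+|\tilde\xi|^4+\tau^2$ in place of $|x|_H^4$ and $\varepsilon$-Young's inequality in the subcritical step, but the structure---including the annulus refinement at the critical exponent---is otherwise identical). One harmless slip: after absorbing $I_R^{1/p}$ the exponent should be $Q-2p'$, not $(Q-2p')/(p-1)$, but since $p>1$ the sign is unaffected and your conclusion stands.
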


\begin{proof} In the proof we widely use the method of test functions. We start from Definition \ref{defnweakel}. From the identity \eqref{ell} one has
\begin{equation}
\label{elldoyou}
\begin{split}
\int_{\H}|u|^p\varphi dx
&=  - \int_{\H} M\left(\int_{\H}|\na u|^{2}dx,\int_{\H}|\L u|^{2}dx\right) u\L \varphi dx
\\&\leq\left|\int_{\H}M\left(\int_{\H}|\na u|^{2}dx,\int_{\H}|\L u|^{2}dx\right) u\L \varphi dx\right|
\\&\leq\int_{\H} \left|M\left(\int_{\H}|\na u|^{2}dx,\int_{\H}|\L u|^{2}dx\right)\right| |u||\L \varphi| dx
\\&\leq C_{0} \int_{\H}|u||\L \varphi| dx.
\end{split}
\end{equation}
Now, we choose the following test function
\begin{equation}
\label{testfuncell}
\varphi_{R} (x) = \Phi \left(\frac{|\xi|^{4}+|\tilde{\xi}|^{4}+\tau^{2}}{R^{4}}\right), \,\,x=(\xi,\tilde{\xi},\tau)\in\H,\,\,R>0,
\end{equation}
with $\Phi\in C^{\infty}_{0}(\mathbb{R}_{+})$ satisfying the following structure
\begin{equation}\label{testell}
\Phi(r) =
    \begin{cases}
1, & \text{if $ 0\leq r \leq 1 $,} \\
\searrow, & \text{if $1< r\leq 2$,}\\
0, & \text{if $r>  2$,}
\end{cases}
\end{equation}
with $\text{supp}\,\Phi=[0,2]$. We note that $\text{supp}(\varphi_{R})$ and $\text{supp}(\L\varphi_{R})$ are subsets of $\Omega_{1}$, where
\begin{equation*}
\Omega_{1}:=\{x=(\xi,\tilde{\xi},\tau)\in\H:|\xi|^{4}+|\tilde{\xi}|^{4}+\tau^{2}\leq 2R^{4}\}.
\end{equation*}

By using $\varepsilon$-Young's inequality in \eqref{elldoyou}, we obtain
\begin{equation}
\begin{split}
\int_{\H}|u|^p\varphi_{R} dx&\leq  C_{0}\int_{\H} |u||\L \varphi_{R}| dx\\&
\stackrel{\eqref{testell}}=C_{0}\int_{\Omega_{1}} |u||\L \varphi_{R}| dx
\\&=C_{0} \int_{\Omega_{1}}|u|\varphi_{R}^{\frac{1}{p}}\varphi_{R}^{-\frac{1}{p}}|\L \varphi_{R}| dx
\\& =C_{0} \left(\frac{\int_{\Omega_{1}}|u|^{p}\varphi_{R} dx}{\varepsilon ^{p}p}+\frac{(p-1)\varepsilon^{\frac{p}{p-1}}\int_{\Omega_{1}}\varphi_{R}^{-\frac{1}{p-1}}|\L \varphi_{R}|^{\frac{p}{p-1}} dx}{p}\right)
\\&=\frac{C_{0}}{p\varepsilon^{p}} \left(\int_{\Omega_{1}}|u|^{p}\varphi_{R} dx+(p-1)\varepsilon^{p+\frac{p}{p-1}}\int_{\Omega_{1}}\varphi_{R}^{-\frac{1}{p-1}}|\L \varphi_{R}|^{\frac{p}{p-1}} dx\right).
\end{split}
\end{equation}
By choosing $\varepsilon$ such that $C_{0}<p\varepsilon^{p}$   we get
\begin{equation}
\label{26}
\begin{split}
\int_{\H}|u|^{p}\varphi_{R} dx\leq C\left(\int_{\Omega_{1}}\varphi_{R}^{-\frac{1}{p-1}}|\L \varphi_{R}|^{\frac{p}{p-1}} dx\right).
\end{split}
\end{equation}
Let us denote by 
$$\rho=\frac{|\xi|^{4}+|\tilde{\xi}|^{4}+\tau^{2}}{R^{4}}.$$
Then from \cite{PohVer}, one has the following property
\begin{equation}\label{poxxell}
\begin{split}
\L\varphi_{R}&(x)=\frac{4(n+4)}{R^{4}}\left(|\xi|^{2}+|\tilde{\xi}|^{2}\right)\Phi'\left(\rho\right)
\\&+\frac{16}{R^{8}}\left((|\xi|^{6}+|\tilde{\xi}|^{6})+2\tau(|\xi|^{2}-|\tilde{\xi}|^{2})\xi\cdot\tilde{\xi}+\tau^{2}(|\xi|^{2}+|\tilde{\xi}|^{2})\right)\Phi''\left(\rho\right).
\end{split}
\end{equation}
Taking into account \eqref{poxxell}, we obtain
\begin{equation}\label{ocenell123}
|\L \varphi_{R}|\leq CR^{-2}.
\end{equation}
Let us denote 
$$\Theta:=\{y=(\overline{\xi},\widehat{\xi},\tilde{\tau})\in \H: |\overline{\xi}|^{4}+|\widehat{\xi}|^{4}+\tilde{\tau}^{2}\leq 2\},$$
and 
$$\mu:=|\overline{\xi}|^{4}+|\widehat{\xi}|^{4}+\tilde{\tau}^{2}.$$
By changing variables $R\overline{\xi}=\xi$, $R\widehat{\xi}=\tilde{\xi}$, $R^{2}\tilde{\tau}=\tau$   in \eqref{26} and by using \eqref{ocenell123}, we get
\begin{equation}
\label{210}
\begin{split}
&\int_{\H}|u|^{p}\varphi_{R} dx\leq C\left(\int_{\Omega_{1}}\varphi_{R}^{-\frac{1}{p-1}}|\L \varphi_{R}|^{\frac{p}{p-1}} dx\right)
\\&\leq C R^{-\frac{2p}{p-1}+Q}\left(\int_{\Theta}(\Phi\circ\mu)^{-\frac{1}{p-1}} dy\right)\\&
\leq  CR^{-\frac{2p}{p-1}+Q}.
\end{split}
\end{equation}
Let us consider the case $p<\frac{Q}{Q-2},$ and we note that $-\frac{2p}{p-1}+Q<0$ is equivalent to $p<\frac{Q}{Q-2}.$ Then letting $R\rightarrow \infty$ in \eqref{210}, we arrive at
\begin{equation}\label{osnellrav}
\begin{split}
\int_{\H}|u|^{p} dx\leq C \lim_{R\rightarrow \infty}R^{-\frac{2p}{p-1}+Q}=0,
\end{split}
\end{equation}
concluding that $u=0$. This is a contradiction.

Let us consider the case $p=\frac{Q}{Q-2}>1$. In this case, from \eqref{osnellrav}, we get 
\begin{equation}\label{upell}
\begin{split}
\int_{\H}|u|^{p} \varphi_{R} dx\leq C,
\end{split}
\end{equation}
where $C>0$.

Let us introduce the following domain:
\begin{equation}
    \overline{\Omega}_{R}:=\{x=(\xi,\tilde{\xi},\tau)\in\H:R^{4}\leq |\xi|^{4}+|\tilde{\xi}|^{4}+\tau^{2}\leq 2R^{4},
\end{equation}
then by combining \eqref{upell} and the Lebesgue dominated convergence theorem, we have
\begin{equation}\label{00}
    \lim_{R\rightarrow \infty}\int_{\overline{\Omega}_{R}}|u|^{p}\varphi_{R} dx=0.
\end{equation}

Let us denote by 
$$\Theta_{1}:=\{y=(\overline{\xi},\widehat{\xi},\tilde{\tau})\in \H:1\leq |\overline{\xi}|^{4}+|\widehat{\xi}|^{4}+\tilde{\tau}^{2}\leq 2\},$$
By using H\"{o}lder's inequality in \eqref{elldoyou}, we get

\begin{equation*}
\begin{split}
\int_{\H}|u|^p\varphi_{R} dx&\leq C_{0} \int_{\Omega_{1}}|u||\L \varphi_{R}| dx\\&
\leq C_{0} \int_{\Omega_{1}\setminus\overline{\Omega}_{R}}|u||\L \varphi_{R}| dx+C_{0} \int_{\overline{\Omega}_{R}}|u||\L \varphi_{R}| dx
\\&
\stackrel{\eqref{testell}}=C_{0} \int_{\overline{\Omega}_{R}}|u||\L \varphi_{R}| dx
\\&=C_{0} \int_{\overline{\Omega}_{R}}|u|\varphi_{R}^{\frac{1}{p}}\varphi_{R}^{-\frac{1}{p}}|\L \varphi_{R}| dx
\\&\leq C_{0}\left(\int_{\overline{\Omega}_{R}}|u|^{p}\varphi_{R} dx\right)^{\frac{1}{p}}\left(\int_{\overline{\Omega}_{R}}\varphi_{R}^{-\frac{1}{p-1}}|\L \varphi_{R}|^{\frac{p}{p-1}} dx\right)^{\frac{p-1}{p}}\\&
\stackrel{\eqref{210}}=  C_{0}R^{-2+\frac{Q(p-1)}{p}}\left(\int_{\overline{\Omega}_{R}}|u|^{p}\varphi_{R} dx\right)^{\frac{1}{p}}\left(\int_{\Theta_{1}}(\Phi\circ\mu)^{-\frac{1}{p-1}}dy\right)^{\frac{p-1}{p}}\\&
\stackrel{p=\frac{Q}{Q-2}}=C\left(\int_{\overline{\Omega}_R}|u|^{p}\varphi_{R} dx\right)^{\frac{1}{p}}
\left(\int_{\Theta_{1}}(\Phi\circ\mu)^{-\frac{1}{p-1}}dy\right)^{\frac{p-1}{p}}\\&
\leq C\left(\int_{\overline{\Omega}_{R}}|u|^{p}\varphi_{R} dx\right)^{\frac{1}{p}}.
\end{split}
\end{equation*}
Finally, we get 
\begin{equation*}
\begin{split}
\int_{\H}|u|^p\varphi_{R} dx&\leq C\left(\int_{\overline{\Omega}_{R}}|u|^{p}\varphi_{R} dx\right)^{\frac{1}{p}}.
\end{split}
\end{equation*}
By letting $R\rightarrow \infty$ and using \eqref{00}, we get
$$
\int_{\H}|u|^p dx=\lim_{R\rightarrow \infty}\int_{\H}|u|^p \varphi_{R}dx\leq C\lim_{R\rightarrow \infty}\left(\int_{\overline{\Omega}_{R}}|u|^{p}\varphi_{R} dx\right)^{\frac{1}{p}}=0,
$$
that is, $u=0,$ ending the proof.
\end{proof}

\subsection{Elliptic type systems}

In this subsection we consider the following system of Kirchhoff type elliptic equations
\begin{equation}\label{ellsys}
\begin{cases}
-M_{1}\left(\int_{\H}|\na u|^{2}dx,\int_{\H}|\L u|^{2}dx\right)\L u = |v|^{q},   \,\, x \in \H,\,\, q > 1,\\
-M_{2}\left(\int_{\H}|\na v|^{2}dx,\int_{\H}|\L v|^{2}dx\right)\L v = |u|^{p},   \,\, x\in \H,\,\, p > 1,\\

\end{cases}
\end{equation}
where $u,v\in C^{2}(\H)$ and $M_{1},M_{2}:\mathbb{R}_{+}\times\mathbb{R}_{+}\rightarrow\mathbb{R}$ are  bounded functions such that
\begin{equation}\label{sysell1}
0<M_{1}(\cdot,\cdot)\leq C_{1},
\end{equation}
and
\begin{equation}\label{sysell2}
0<M_{2}(\cdot,\cdot)\leq C_{2}.
\end{equation}

Now, we formulate a definition of weak solutions to the system \eqref{ellsys}.
\begin{defn}
We say that a pair of functions $(u, v)\in S^{2}_{2}(\H)\cap L^{p}(\H)\times S^{2}_{2}(\H)\cap L^{q}(\H)$ with $p,q>1$ is a  weak solution of the system of equations \eqref{ellsys} on the group $\H$,  if the following identities 
\begin{equation}
\int_{\H}|v|^q\varphi dx
=  - \int_{\H} M_{1}\left(\int_{\H}|\na u|^{2}dx,\int_{\H}|\L u|^{2}dx\right) u\L \varphi dx,
\end{equation}
and
\begin{equation}
\int_{\H}|u|^p\psi dx
=  - \int_{\H} M_{2}\left(\int_{\H}|\na v|^{2}dx,\int_{\H}|\L v|^{2}dx\right) v\L \psi dx,
\end{equation}
hold true, for any test functions $0\leq\psi,\varphi\in C_{0}^{2}(\H)$.
\end{defn}

In the following statement, we give a nonexistence result for the system \eqref{ellsys}.
\begin{thm}
Let $p,q>1$. Suppose $M_{1}$ and $M_{2}$ satisfy \eqref{sysell1} and \eqref{sysell2}, respectively. Assume that  the following inequality
\begin{equation}
Q\left(1-\frac{1}{pq}\right)\leq 2\max\left\{1+\frac{1}{q},1+\frac{1}{p}\right\},
\end{equation}
holds with $Q=2n+2$ the homogeneous dimension of $\H$. Then the system of elliptic type equations \eqref{ellsys} does not have global non-trivial weak solution.
\end{thm}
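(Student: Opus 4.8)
The plan is to mimic the proof of Theorem \ref{ellthm}, but now the test-function estimates for the two equations have to be coupled. First I would fix the same family of test functions $\varphi_R$ as in \eqref{testfuncell}–\eqref{testell}, so that $\mathrm{supp}(\varphi_R)$ and $\mathrm{supp}(\L\varphi_R)$ lie in $\Omega_1$ and $|\L\varphi_R|\le CR^{-2}$ holds. Applying the weak formulation of the first equation, together with the bound \eqref{sysell1} on $M_1$, H\"older's inequality with exponents $q$ and $q/(q-1)$, and the substitution $\varphi_R=\varphi_R^{1/q}\varphi_R^{-1/q}$, I get
\begin{equation*}
\int_{\H}|v|^q\varphi_R\,dx\le C_1\int_{\Omega_1}|u|\,|\L\varphi_R|\,dx
\le C_1\Big(\int_{\Omega_1}|u|^q\varphi_R\,dx\Big)^{\frac1q}
\Big(\int_{\Omega_1}\varphi_R^{-\frac{1}{q-1}}|\L\varphi_R|^{\frac{q}{q-1}}\,dx\Big)^{\frac{q-1}{q}}.
\end{equation*}
Wait — here the right-hand side involves $\|u\|$, not $\|v\|$, so I should instead apply H\"older so that the power of $u$ matches the exponent in the \emph{second} equation. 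That is, use exponents $p$ and $p/(p-1)$:
\begin{equation*}
\int_{\H}|v|^q\varphi_R\,dx\le C_1\Big(\int_{\Omega_1}|u|^p\varphi_R\,dx\Big)^{\frac1p}
\Big(\int_{\Omega_1}\varphi_R^{-\frac{1}{p-1}}|\L\varphi_R|^{\frac{p}{p-1}}\,dx\Big)^{\frac{p-1}{p}},
\end{equation*}
and, by the scaling change of variables $R\overline\xi=\xi$, $R\widehat\xi=\tilde\xi$, $R^2\tilde\tau=\tau$ exactly as in \eqref{210}, the last integral is bounded by $CR^{-2+\frac{Q(p-1)}{p}}$. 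Symmetrically, from the second equation with $M_2$ bounded as in \eqref{sysell2} and H\"older with exponents $q,q/(q-1)$,
\begin{equation*}
\int_{\H}|u|^p\psi_R\,dx\le C_2\Big(\int_{\Omega_1}|v|^q\psi_R\,dx\Big)^{\frac1q} CR^{-2+\frac{Q(q-1)}{q}}.
\end{equation*}

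Next I would chain the two inequalities. Taking $\psi_R=\varphi_R$ and denoting $A_R=\int_{\H}|u|^p\varphi_R\,dx$, $B_R=\int_{\H}|v|^q\varphi_R\,dx$, the two estimates read $B_R\le C A_R^{1/p}R^{\alpha}$ and $A_R\le C B_R^{1/q}R^{\beta}$ with $\alpha=-2+\frac{Q(p-1)}{p}$ and $\beta=-2+\frac{Q(q-1)}{q}$. Substituting the first into the second gives $A_R\le C\,A_R^{1/(pq)}R^{\beta+\alpha/q}$, hence $A_R^{1-\frac{1}{pq}}\le C R^{\beta+\alpha/q}$; symmetrically $B_R^{1-\frac{1}{pq}}\le C R^{\alpha+\beta/p}$. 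A computation shows $\beta+\alpha/q = Q(1-\frac1{pq})-2(1+\frac1q)$ and $\alpha+\beta/p=Q(1-\frac1{pq})-2(1+\frac1p)$. Under the hypothesis $Q(1-\frac{1}{pq})\le 2\max\{1+\frac1q,1+\frac1p\}$, at least one of these two exponents is $\le 0$; say $\beta+\alpha/q\le 0$. In the strict case, letting $R\to\infty$ forces $\int_{\H}|u|^p\,dx=0$, hence $u\equiv0$, and then the second equation forces $v\equiv0$.

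For the borderline (equality) case I would copy the argument from the second half of the proof of Theorem \ref{ellthm}: when the relevant exponent is exactly $0$, first conclude $\int_{\H}|u|^p\,dx\le C<\infty$, then use the Lebesgue dominated convergence theorem over the annular region $\overline\Omega_R$ to get $\int_{\overline\Omega_R}|u|^p\varphi_R\,dx\to0$; re-running the H\"older estimate but integrating only over $\overline\Omega_R$ (the part where $\L\varphi_R$ is supported, since $\L\varphi_R\equiv0$ on $\{\rho\le1\}$) produces $A_R\le C\big(\int_{\overline\Omega_R}|u|^p\varphi_R\,dx\big)^{1/(pq)}\cdot(\text{bounded})$, and $R\to\infty$ gives $u\equiv0$, whence $v\equiv0$. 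The main obstacle I anticipate is purely bookkeeping: making sure H\"older is applied with the exponent that matches the \emph{other} equation's nonlinearity so that the two estimates actually compose, and then checking that the algebra of the exponents reproduces exactly the stated condition $Q(1-\frac1{pq})\le 2\max\{1+\frac1q,1+\frac1p\}$ — including correctly handling which of the two maxima is active and thus which of $u,v$ is eliminated first. Nothing beyond the techniques already used in Theorem \ref{ellthm} is needed.
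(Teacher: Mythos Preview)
Your proposal is correct and follows essentially the same route as the paper: apply H\"older with exponents $(p,p')$ and $(q,q')$ to the two weak formulations so that the resulting integrals cross-reference each other, compose the two estimates to obtain $A_R^{1-1/(pq)}\le CR^{\beta+\alpha/q}$ and $B_R^{1-1/(pq)}\le CR^{\alpha+\beta/p}$, verify that these exponents equal $Q(1-\tfrac1{pq})-2(1+\tfrac1q)$ and $Q(1-\tfrac1{pq})-2(1+\tfrac1p)$ respectively, and then treat the strict and equality cases exactly as in Theorem~\ref{ellthm}. The only cosmetic difference is that the paper carries two separate test functions $\varphi_R,\psi_R$ of identical form, while you take $\psi_R=\varphi_R$; also, once you conclude $u\equiv0$ the paper re-runs the symmetric argument, whereas you (equivalently and slightly more efficiently) read $v\equiv0$ directly from the first weak identity.
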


\begin{proof}
In analogy with the single equation case, one can  show that
\begin{equation}
\int_{\H}|v|^{q}\varphi dx\leq C_{0}\int_{\H}|u||\L\varphi| dx,\,\,\,\,\,\int_{\H}|u|^{p}\psi dx\leq C_{1}\int_{\H}|v||\L\psi| dx.
\end{equation}
Now, we choose the following test functions
\begin{equation*}
\varphi_R (x) = \Phi\left(\frac{|\xi|^{4}+|\tilde{\xi}|^{4}+\tau^{2}}{R^{4}}\right), \,\,\,x=(\xi,\tilde{\xi},\tau),\,\, R>0,
\end{equation*}
and 
\begin{equation*}
\psi_R (x) = \Psi \left(\frac{|\xi|^{4}+|\tilde{\xi}|^{4}+\tau^{2}}{R^{4}}\right), \,\,\,x=(\xi,\tilde{\xi},\tau),\,\, R>0,
\end{equation*}
where $\Phi,\Psi\in C^{\infty}_{0}(\mathbb{R}_{+})$ have the following property
\[
\Phi(r),\Psi(r) =
\begin{cases}
1, & \text{if $ 0\leq r \leq 1 $,} \\
\searrow, & \text{if $1< r\leq 2$,}\\
0, & \text{if $r>  2$,}
\end{cases}
\]
with $\text{supp}\,\Psi=\text{supp}\,\Phi=[0,2]$. We note that $\text{supp}(\varphi_{R})$, $\text{supp}(\psi_{R})$,  $\text{supp}(\L\varphi_{R})$ and $\text{supp}(\L\psi_{R})$  are subsets of $\Omega_{1}$, where
\begin{equation*}
\Omega_{1}:=\{x=(\xi,\tilde{\xi},\tau)\in\H: |\xi|^{4}+|\tilde{\xi}|^{4}+\tau^{2}\leq2R^{4}\}.
\end{equation*}
Then for the first inequality, by using H\"{o}lder's inequality, one obtains
\begin{equation}\label{1ocenellsys}
\begin{split}
\int_{\H}|v|^{q}\varphi_{R} dx&\leq C_{0}\int_{\H}|u||\L\varphi_{R}| dx\\&
=C_{0}\int_{\Omega_{1}}|u||\L\varphi_{R}| dx\\&
=C_{0}\int_{\Omega_{1}}|u|\psi_{R}^{\frac{1}{p}}\psi_{R}^{-\frac{1}{p}}|\L\varphi_{R}| dx
\\&\leq C_{0}\left(\int_{\Omega_{1}}|u|^{p}\psi_{R} dx\right)^{\frac{1}{p}} \left(\int_{\Omega_{1}}\psi_{R}^{-\frac{1}{p-1}}|\L\varphi_{R}|^{\frac{p}{p-1}} dx\right)^{\frac{p-1}{p}}\\&
= C_{0}\left(\int_{\H}|u|^{p}\psi_{R} dx\right)^{\frac{1}{p}} \left(\int_{\Omega_{1}}\psi_{R}^{-\frac{1}{p-1}}|\L\varphi_{R}|^{\frac{p}{p-1}} dx\right)^{\frac{p-1}{p}}.
\end{split}
\end{equation}
Similarly, we have
\begin{equation}\label{2ocenellsys}
\int_{\H}|u|^{p}\psi_{R} dx\leq C_{1}\left(\int_{\H}|v|^{q}\varphi_{R} dx\right)^{\frac{1}{q}} \left(\int_{\Omega_{1}}\varphi_{R}^{-\frac{1}{q-1}}|\L\psi_{R}|^{\frac{q}{q-1}} dx\right)^{\frac{q-1}{q}}.
\end{equation}
By combining \eqref{1ocenellsys} and \eqref{2ocenellsys}, one finds
\begin{equation}\label{ellsys1ocen}
\begin{split}
&\int_{\H}|v|^{q}\varphi_{R} dx\leq  C_{0}\left(\int_{\H}|u|^{p}\psi_{R} dx\right)^{\frac{1}{p}} \left(\int_{\Omega_{1}}\psi_{R}^{-\frac{1}{p-1}}|\L\varphi_{R}|^{\frac{p}{p-1}} dx\right)^{\frac{p-1}{p}}
\\&\leq C_{0}C^{\frac{1}{p}}_{1}\left(\int_{\H}|v|^{q}\varphi_{R} dx\right)^{\frac{1}{pq}}\left(\int_{\Omega_{1}}\varphi_{R}^{-\frac{1}{q-1}}|\L\psi_{R}|^{\frac{q}{q-1}} dx\right)^{\frac{q-1}{pq}}\\&
\times\left(\int_{\Omega_{1}}\psi_{R}^{-\frac{1}{p-1}}|\L\varphi_{R}|^{\frac{p}{p-1}} dx\right)^{\frac{p-1}{p}}.
\end{split}
\end{equation}
Finally, we obtain
\begin{equation}
\label{pqsys1}
\begin{split}
\left(\int_{\H}|v|^{q}\varphi_{R} dx\right)^{1-\frac{1}{pq}}
&\leq C_{0}C^{\frac{1}{p}}_{1}\left(\int_{\Omega_{1}}\varphi_{R}^{-\frac{1}{q-1}}|\L\psi_{R}|^{\frac{q}{q-1}} dx\right)^{\frac{q-1}{pq}}\\&
\times\left(\int_{\Omega_{1}}\psi_{R}^{-\frac{1}{p-1}}|\L\varphi_{R}|^{\frac{p}{p-1}} dx\right)^{\frac{p-1}{p}}.
\end{split}
\end{equation}
Also, we have
\begin{equation}
\label{pqsys2}
\begin{split}
\left(\int_{\H}|u|^{p}\psi_{R} dx\right)^{1-\frac{1}{pq}}&\leq C_{1}C^{\frac{1}{q}}_{0}\left(\int_{\Omega_{1}}\varphi_{R}^{-\frac{1}{q-1}}|\L\psi_{R}|^{\frac{q}{q-1}} dx\right)^{\frac{q-1}{q}}\\&
\times\left(\int_{\Omega_{1}}\psi_{R}^{-\frac{1}{p-1}}|\L\varphi_{R}|^{\frac{p}{p-1}} dx\right)^{\frac{p-1}{pq}}.
\end{split}
\end{equation}

Taking into account \eqref{poxxell}, one obtains
\begin{equation}
|\L \psi_{R}|\leq CR^{-2},\,\,\,|\L \varphi_{R}|\leq CR^{-2}.
\end{equation}
Similarly to equation case, let us denote 
$$\Theta:=\{y=(\overline{\xi},\widehat{\xi},\tilde{\tau})\in \H:|\overline{\xi}|^{4}+|\widehat{\xi}|^{4}+\tilde{\tau}^{2}\leq 2\},$$
and 
$$\mu:=|\overline{\xi}|^{4}+|\widehat{\xi}|^{4}+\tilde{\tau}^{2}.$$
By changing variables $R\overline{\xi}=\xi$, $R\widehat{\xi}=\tilde{\xi}$ and $R^{2}\tilde{\tau}=\tau$ in \eqref{pqsys1}, we get
\begin{equation}\label{pqsys3}
\begin{split}
&\left(\int_{\H}|v|^{q}\varphi_{R} dx\right)^{1-\frac{1}{pq}}\\&\leq C_{0}C^{\frac{1}{p}}_{1}\left(\int_{\Omega_{1}}\varphi_{R}^{-\frac{1}{q-1}}|\L\psi_{R}|^{\frac{q}{q-1}} dx\right)^{\frac{q-1}{pq}}\left(\int_{\Omega_{1}}\psi_{R}^{-\frac{1}{p-1}}|\L\varphi_{R}|^{\frac{p}{p-1}} dx\right)^{\frac{p-1}{p}}
\\&\leq C_{0}C^{\frac{1}{p}}_{1}R^{-\frac{2}{p}+\frac{Q(q-1)}{pq}+\frac{Q(p-1)}{p}-2}\left(\int_{\Theta}(\Phi\circ\mu)^{-\frac{1}{q-1}} dy\right)^{\frac{q-1}{pq}}\left(\int_{\Theta}(\Psi\circ\mu)^{-\frac{1}{p-1}} dy\right)^{\frac{p-1}{p}}
\\&=C_{0}C^{\frac{1}{p}}_{1}R^{l_{1}}\left(\int_{\Theta}(\Phi\circ\mu)^{-\frac{1}{q-1}} dy\right)^{\frac{q-1}{pq}}\left(\int_{\Theta}(\Psi\circ\mu)^{-\frac{1}{p-1}} dy\right)^{\frac{p-1}{p}}\\&
\leq CR^{l_{1}},
\end{split}
\end{equation}
where $l_{1}=-\frac{2}{p}+\frac{Q(q-1)}{pq}+\frac{Q(p-1)}{p}-2$.

Also, we have
\begin{equation*}\label{pqsys4}
\begin{split}
&\left(\int_{\H}|u|^{p}\psi_{R} dx\right)^{1-\frac{1}{pq}} \\&
\leq C_{1}C_{0}^{\frac{1}{q}}\left(\int_{\Omega_{1}}\varphi_{R}^{-\frac{1}{q-1}}|\L\psi_{R}|^{\frac{q}{q-1}} dx\right)^{\frac{q-1}{q}}\left(\int_{\Omega_{1}}\psi_{R}^{-\frac{1}{p-1}}|\L\varphi_{R}|^{\frac{p}{p-1}} dx\right)^{\frac{p-1}{pq}}\\&
=C_{1}C_{0}^{\frac{1}{q}}R^{l_{2}}\left(\int_{\Theta}(\Phi\circ\mu)^{-\frac{1}{q-1}} dy\right)^{\frac{q-1}{q}}\left(\int_{\Theta}(\Psi\circ\mu)^{-\frac{1}{p-1}} dy\right)^{\frac{p-1}{pq}}
\\&\leq CR^{l_{2}},
\end{split}
\end{equation*}
where $l_{2}=-\frac{2}{q}+\frac{Q(p-1)}{pq}+\frac{Q(q-1)}{q}-2$.

Simple calculations show that
\begin{equation}
\begin{split}
l_{1}&=-\frac{2}{p}+\frac{Q(q-1)}{pq}+\frac{Q(p-1)}{p}-2
\\&=-\frac{2}{p}-2+Q\left(\frac{1}{p}-\frac{1}{pq}+1-\frac{1}{p}\right)
\\&=-\frac{2}{p}-2+Q\left(1-\frac{1}{pq}\right)<0,
\end{split}
\end{equation}
provided that
$$
Q<\frac{2+\frac{2}{p}}{1-\frac{1}{pq}}.
$$

Then letting $R\rightarrow\infty$, we obtain
$$
\left(\int_{\H}|v|^{q}\varphi_{R} dx\right)^{1-\frac{1}{pq}}\leq0,
$$
showing that $\int_{\H}|v|^{q} dx=0$.

Similar calculations works in the case $l_{2}<0$, i.e.,
$$
Q\left(1-\frac{1}{pq}\right)< 2\left(1+\frac{1}{q}\right).
$$
Finally, letting $R\rightarrow\infty$, we get
$$
\int_{\H}|u|^{p} dx=0.
$$

Let us consider the case $l_{1}=0$ (or $l_{2}=0$).
By denoting
\begin{equation*}
\overline{\Omega}_{R}:=\{x=(\xi,\tilde{\xi},\tau)\in\H:R^{4}\leq |\xi|^{4}+|\tilde{\xi}|^{4}+\tau^{2}\leq2R^{4}\},
\end{equation*}
we have 
\begin{equation}
  \lim\limits_{R\rightarrow \infty}  \int_{\overline{\Omega}_{R}}|v|^{q}\varphi_{R} dx=0,
\end{equation}
and 
\begin{equation}
  \lim\limits_{R\rightarrow \infty}  \int_{\overline{\Omega}_{R}}|u|^{p}\psi_{R} dx=0.
\end{equation}
Let us denote by 
$$\Theta_{1}:=\{y=(\overline{\xi},\widehat{\xi},\tilde{\tau})\in \H:1\leq |\overline{\xi}|^{4}+|\widehat{\xi}|^{4}+\tilde{\tau}_{1}^{2}\leq 2\}.$$
From \eqref{ellsys1ocen}, \eqref{pqsys2} and changing variables $R\overline{\xi}=\xi$, $R\widehat{\xi}=\tilde{\xi}$ and $R^{2}\tilde{\tau}=\tau$, we establish
\begin{equation}\label{pqsysel3}
\begin{split}
\int_{\H}|v|^{q}\varphi_{R} dx&\leq C_{0}\int_{\H}|u||\L\varphi_{R}| dx\\&
=C_{0}\int_{\overline{\Omega}_{R}}|u||\L\varphi_{R}| dx\\&
\leq C_{0}\left(\int_{\overline{\Omega}_{R}}|v|^{q}\varphi_{R} dx\right)^{\frac{1}{pq}}\left(\int_{\overline{\Omega}_{R}}\varphi_{R}^{-\frac{1}{q-1}}|\L\psi_{R}|^{\frac{q}{q-1}} dx\right)^{\frac{q-1}{pq}}\\&
\times\left(\int_{\overline{\Omega}_{R}}\psi_{R}^{-\frac{1}{p-1}}|\L\varphi_{R}|^{\frac{p}{p-1}} dx\right)^{\frac{p-1}{p}}\\&
\leq C_{0}R^{l_{1}}\left(\int_{\overline{\Omega}_{R}}|v|^{q}\varphi_{R} dx\right)^{\frac{1}{pq}}
\left(\int_{\Theta_{1}}[\Phi\circ\rho]^{-\frac{1}{q-1}} dy\right)^{\frac{q-1}{pq}}\\&
\times\left(\int_{\Theta_{1}}[\Psi\circ \rho]^{-\frac{1}{p-1}} dy\right)^{\frac{p-1}{p}}\\&
\stackrel{l_{1}=0}\leq C\left(\int_{\overline{\Omega}_{R}}|v|^{q}\varphi_{R} dx\right)^{\frac{1}{pq}},
\end{split}
\end{equation}
and  
\begin{equation}
\begin{split}
\int_{\H}|u|^{p}\psi_{R} dx&\leq C \left(\int_{\overline{\Omega}_{R}}|u|^{p}\psi_{R} dx\right)^{\frac{1}{pq}}\left(\int_{\overline{\Omega}_{R}}\varphi_{R}^{-\frac{1}{q-1}}|\L\psi_{R}|^{\frac{q}{q-1}} dx\right)^{\frac{q-1}{q}}\\&
\times\left(\int_{\overline{\Omega}_{R}}\psi_{R}^{-\frac{1}{p-1}}|\L\varphi_{R}|^{\frac{p}{p-1}} dx\right)^{\frac{p-1}{pq}}\\&
\leq C \left(\int_{\overline{\Omega}_{R}}|u|^{p}\psi_{R} dx\right)^{\frac{1}{pq}}.
\end{split}
\end{equation}
Hence, we obtain
\begin{equation}
    \int_{\H}|v|^{q} dx=\lim_{R\rightarrow \infty}\int_{\H}|v|^{q}\varphi_{R} dx\leq C\lim\limits_{R\rightarrow \infty}\left(\int_{\overline{\Omega}_{R}}|v|^{q}\varphi_{R} dx\right)^{\frac{1}{pq}}=0,
\end{equation}
and 
\begin{equation}
    \int_{\H}|u|^{p}dx=\lim_{R\rightarrow \infty}\int_{\H}|u|^{p}\psi_{R} dx\leq C\lim\limits_{R\rightarrow \infty}\left(\int_{\overline{\Omega}_{R}}|u|^{p}\psi_{R} dx\right)^{\frac{1}{pq}}=0,
\end{equation}
ending the proof of the theorem.
\end{proof}

\begin{cor}
Let $p=q>1$, $u=v$ and $M_{1}=M_{2}$. Then we arrive at the result of Theorem \ref{ellthm}.
\end{cor}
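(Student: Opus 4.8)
The plan is to verify that, under the stated specialisations, the system \eqref{ellsys} collapses to the single equation \eqref{ell}, and that the admissibility inequality of the preceding theorem becomes precisely the range $1<p\le\frac{Q}{Q-2}$ appearing in Theorem \ref{ellthm}. First I would set $M:=M_{1}=M_{2}$ and note that the bounds \eqref{sysell1}--\eqref{sysell2} reduce to \eqref{Mel} with $C_{0}:=\max\{C_{1},C_{2}\}$; with $u=v$ and $p=q$ the two equations of \eqref{ellsys} are literally the equation \eqref{ell}, and the two weak formulations coincide with Definition \ref{defnweakel}. Thus a global nontrivial weak solution of \eqref{ell} would be a global nontrivial weak solution of the system with these data, and conversely.

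Next I would simplify the hypothesis $Q\bigl(1-\tfrac{1}{pq}\bigr)\le 2\max\{1+\tfrac1q,\,1+\tfrac1p\}$ at $p=q$. There $\max\{1+\tfrac1q,\,1+\tfrac1p\}=1+\tfrac1p$ and $1-\tfrac{1}{p^{2}}=\bigl(1-\tfrac1p\bigr)\bigl(1+\tfrac1p\bigr)$, so dividing by the positive factor $1+\tfrac1p$ gives $Q\bigl(1-\tfrac1p\bigr)\le 2$, equivalently $p(Q-2)\le Q$, i.e. $p\le\frac{Q}{Q-2}$. Combined with $p>1$ this is exactly the hypothesis of Theorem \ref{ellthm}. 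Hence the nonexistence statement for the system (or, directly, Theorem \ref{ellthm} itself) yields the absence of a global nontrivial weak solution of \eqref{ell}, which is precisely the conclusion of Theorem \ref{ellthm}.

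There is no genuine obstacle here: the whole content is the elementary factorisation $1-p^{-2}=(1-p^{-1})(1+p^{-1})$ together with the observation that the test-function chain used in the system proof degenerates verbatim to the one used for the scalar equation once $\psi_{R}=\varphi_{R}$ and $p=q$. The one point worth a line of checking is the borderline: $l_{1}=l_{2}=0$ in the system argument corresponds exactly to $p=\frac{Q}{Q-2}$, which is handled by the separate Lebesgue-dominated-convergence step in the proof of Theorem \ref{ellthm}, so the two treatments of the critical case are consistent and no additional argument is required.
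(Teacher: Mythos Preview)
Your proposal is correct and follows essentially the same approach as the paper: both reduce the system hypothesis at $p=q$ via the factorisation $1-\tfrac{1}{p^{2}}=\bigl(1-\tfrac{1}{p}\bigr)\bigl(1+\tfrac{1}{p}\bigr)$ and cancel the common factor $1+\tfrac{1}{p}$ to obtain $p\le\frac{Q}{Q-2}$. Your additional remarks on the collapse of the weak formulations and the consistency of the critical-case treatment are sound but go beyond what the paper records, which is just the two-line algebraic check.
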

\begin{proof}
It is easy to see that
$$
Q\left(\frac{p^{2}-1}{p^{2}}\right)=Q\left(1-\frac{1}{p^{2}}\right)\leq 2\left(1+\frac{1}{p}\right)=2\left(\frac{p+1}{p}\right),
$$
showing
$$
p\leq \frac{Q}{Q-2}.
$$
The proof is finished.
\end{proof}

\section{Kirchhoff type parabolic equations and systems}
In this section we prove the Liouville type theorems for the Kirchhoff type parabolic equations and systems on the Heisenberg group $\H$.
\subsection{Parabolic type  equations}
Let us consider the following Kirchhoff type heat equation on $\H$, that is,
\begin{equation}\label{heat}
\begin{cases}
u_{t} -M\left(t,\int_{\H}|\na u|^{2}dx,\int_{\H}|\L u|^{2}dx\right) \L u= |u|^{p},\\
u(x, 0) = u_0(x)\geq0, \,\,\,x\in\H,
\end{cases}
\end{equation}
where $(x, t) \in\ \H \times (0, T),\,T>0,\, p > 1$, $M:\mathbb{R}_{+}\times\mathbb{R}_{+}\times\mathbb{R}_{+}\rightarrow\mathbb{R}$ is satisfying the following condition
\begin{equation}\label{Mh}
0<M(t,\cdot,\cdot)\leq C_{1}t^{\beta},\,\,\forall t>0,\,\,0\leq\beta<1.
\end{equation}

Let us formulate the following definition of the weak solution of the equation \eqref{heat}.
\begin{defn}
We say that $u$ is a weak solution to \eqref{heat} in $\Q=\H\times(0,T)$ with initial data
$0\leq u_{0} \in L^1(\H), $ if $ u\in C^{1}(0,T;S^{2}_{2}(\H))\cap L^p_{loc}(\Q) $ and satisfies\\
\begin{multline}
\int_{\Q}|u|^p\varphi dx dt + \int_{\H} u_{0}(x)\varphi(x,0)dx =-\int_{\Q} u \varphi_{t} dx dt \\
 - \int_{\Q} M\left(t,\int_{\H}|\na u|^{2}dx,\int_{\H}|\L u|^{2}dx\right) u\L \varphi dx dt,
\end{multline}
for any test function $0\leq\varphi\in C^{2,1}_{x,t}(\Q)$.
\end{defn}

Now we give one of the main results of this section.
\begin{thm}\label{heatthm}
Suppose that $M(\cdot,\cdot,\cdot)$ satisfies the condition \eqref{Mh}. If
\begin{equation}
1<p< p_{c}=1+\frac{2-2\beta}{Q+2\beta},
\end{equation}
with $Q=2n+2$ the homogeneous dimension of $\H$, then the problem \eqref{heat} admits no global  weak non-trivial  solution.
\end{thm}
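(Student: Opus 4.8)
The plan is to use the same test-function (nonlinear capacity) method as in the elliptic case, now adapted to the parabolic setting by choosing a test function that also localizes in time. First I would start from the weak formulation, drop the nonnegative initial term $\int_{\H}u_0\varphi(\cdot,0)\,dx\geq 0$, and estimate
\[
\int_{\Q}|u|^p\varphi\,dxdt+\int_{\H}u_0\varphi(x,0)\,dx
\leq \Bigl|\int_{\Q}u\varphi_t\,dxdt\Bigr|
+\int_{\Q}\Bigl|M\bigl(t,\textstyle\int|\na u|^2,\int|\L u|^2\bigr)\Bigr|\,|u|\,|\L\varphi|\,dxdt,
\]
and then invoke \eqref{Mh} to bound the second term by $C_1\int_{\Q}t^{\beta}|u|\,|\L\varphi|\,dxdt$. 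I would take $\varphi_R(x,t)=\Phi\bigl(\rho+\tfrac{t^2}{R^{4}}\bigr)$ with $\rho=\tfrac{|\xi|^4+|\tilde\xi|^4+\tau^2}{R^4}$ and $\Phi$ as in \eqref{testell}, so that the parabolic scaling $x\mapsto\delta_R x$, $t\mapsto R^2 t$ is respected; then $|\varphi_{R,t}|\leq CR^{-2}$ and, by the sub-Laplacian computation \eqref{poxxell}, $|\L\varphi_R|\leq CR^{-2}$, with supports contained in a parabolic cylinder of measure $\sim R^{Q+2}$.

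Next I would apply $\varepsilon$-Young's inequality to each of the two terms on the right, splitting off $\tfrac1p\int|u|^p\varphi_R$ to absorb it into the left side (choosing $\varepsilon$ small as in the proof of Theorem \ref{ellthm}), leaving
\[
\int_{\Q}|u|^p\varphi_R\,dxdt
\leq C\int_{\Q}\varphi_R^{-\frac{1}{p-1}}\,|\varphi_{R,t}|^{\frac{p}{p-1}}\,dxdt
+C\int_{\Q}t^{\frac{p\beta}{p-1}}\varphi_R^{-\frac{1}{p-1}}\,|\L\varphi_R|^{\frac{p}{p-1}}\,dxdt.
\]
The first integral scales like $R^{-\frac{2p}{p-1}}\cdot R^{Q+2}$; in the second, the extra factor $t^{\frac{p\beta}{p-1}}$ contributes, after the substitution $t=R^2\tilde t$, an additional $R^{\frac{2p\beta}{p-1}}$, so that term scales like $R^{Q+2-\frac{2p}{p-1}+\frac{2p\beta}{p-1}}=R^{Q+2-\frac{2p(1-\beta)}{p-1}}$, which dominates (since $\beta\geq 0$). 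Setting the exponent $\delta(p):=Q+2-\frac{2p(1-\beta)}{p-1}$, a short computation shows $\delta(p)<0$ is exactly equivalent to $p<1+\frac{2-2\beta}{Q+2\beta}=p_c$. Hence for $1<p<p_c$ we let $R\to\infty$ to get $\int_{\Q}|u|^p\,dxdt=0$, forcing $u\equiv 0$, a contradiction with nontriviality.

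I expect the main obstacle to be bookkeeping the scaling exponent correctly when the weight $t^\beta$ interacts with the parabolic dilation — in particular making sure the worst (largest) of the two right-hand terms is identified, and that the degenerate endpoint behaviour of $\Phi^{-1/(p-1)}$ near the boundary of the support produces only a finite constant (this is where one needs $\Phi$ chosen as a suitable power near $r=2$, exactly as the reference \cite{PohVer} arrangement ensures). Unlike Theorem \ref{ellthm}, the statement here excludes the critical exponent $p=p_c$, so no second (Hölder-refinement) argument on the annular region is needed; the strict inequality $p<p_c$ means the pure scaling bound already closes the proof.
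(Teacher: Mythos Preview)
Your proposal is correct and follows essentially the same route as the paper: the same parabolically scaled test function $\varphi_R(x,t)=\Phi\bigl((|\xi|^4+|\tilde\xi|^4+\tau^2+t^2)/R^4\bigr)$, the same use of \eqref{Mh} followed by $\varepsilon$-Young's inequality on both right-hand terms, and the same scaling bookkeeping leading to the dominant exponent $Q+2-\tfrac{2p}{p-1}+\tfrac{2\beta p}{p-1}$. You also correctly observe that the strict inequality $p<p_c$ in the statement makes the H\"older-refinement step on the annulus unnecessary here.
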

\begin{proof}
Let us choose
\begin{equation}
\label{testfunch}
\varphi_R (x, t) = \Phi \left(\frac{|
\xi|^{4}+|\tilde{\xi}|^{4}+\tau^{2}+t^{2}}{R^{4}}\right), \,\,\,\,x=(\xi,\tilde{\xi},\tau)\in\H,\,\,t>0,\,\,R>0,
\end{equation}
with $\Phi\in C^{\infty}_{0}(\mathbb{R}_{+})$ and the following property
\[
\Phi(r) =
\begin{cases}
1, & \text{if $ 0\leq r \leq 1 $,} \\
\searrow, & \text{if $1< r\leq 2$,}\\
0, & \text{if $r>  2$.}
\end{cases}
\]
We note that  $\text{supp}(\varphi_{R})$ and $\text{supp}(\L\varphi_{R})$ are subsets of $\Omega_{1}$, where
\begin{equation}
\Omega_{1}:=\{(x,t)=(\xi,\tilde{\xi},\tau,t)\in\H\times
\mathbb{R}_{+}: |
\xi|^{4}+|\tilde{\xi}|^{4}+\tau^{2}+t^{2}\leq2R^{4}\}.
\end{equation}
We seek the test function $\varphi_{R}$ with the following properties
\begin{equation*}
\label{test1}
\int_{\Omega_{1}}|(\varphi_{R}(x,t))_{t}|^{p'}|\varphi_{R}(x,t)|^{-\frac{p'}{p}}dxdt<\infty,
\end{equation*}
and
\begin{equation*}
\label{test2}
\int_{\Omega_{1}}t^{\beta p'}|\L \varphi_{R}(x,t)|^{p'}|\varphi_{R}(x,t)|^{-\frac{p'}{p}}dxdt<\infty.
\end{equation*}

Consider the following integral in the domain $\Omega$,
\begin{equation*}
\begin{split}
&\int_{\Omega}\varphi_{R}(x,t)|u(x,t)|^{p}dxdt\leq \int_{\Omega}\varphi_{R}(x,t)|u(x,t)|^{p}dxdt+\int_{\H}u_{0}(x)\varphi_{R}(x,0)dxdt
\\&=-\int_{\Omega}M\left(t,\int_{\H}|\na u|^{2}dx,\int_{\H}|\L u|^{2}dx\right)u(x,t)\L\varphi_{R}(x,t)dxdt
\\&-\int_{\Q}u(x,t)(\varphi_{R}(x,t))_{t}dxdt
 \\&
 \leq C_{1}\int_{\Omega}t^{\beta}|u(x,t)||\L\varphi_{R}(x,t)|dxdt+\int_{\Q}|u(x,t)||(\varphi_{R}(x,t))_{t}|dxdt.
\end{split}
\end{equation*}
Then we have
\begin{multline*}
\int_{\Omega}\varphi_{R}(x,t)|u(x,t)|^{p}dxdt\leq C_{1}\int_{\Omega}t^{\beta}|u(x,t)||\L\varphi_{R}(x,t)|dxdt+\int_{\Omega}|u(x,t)||(\varphi_{R}(x,t))_{t}|dxdt.
\end{multline*}
By using $\varepsilon$-Young's inequality  in the first integral of the right hand side, one obtains
\begin{equation}
\label{hol3}
\begin{split}
&\int_{\Omega}t^{\beta}|u(x,t)||\L\varphi_{R}(x,t)|dxdt
\\&=\int_{\Omega_{1}}(\varphi_{R}(x,t))^{\frac{1}{p}}|u(x,t)|t^{\beta}|\L\varphi(x,t)|(\varphi_{R}(x,t))^{-\frac{1}{p}}dxdt
\\&\leq\varepsilon \int_{\Omega}\varphi_{R}(x,t) |u(x,t)|^{p}dxdt
+C(\varepsilon) \int_{\Omega_{1}}|t|^{\beta p'}|\L \varphi_{R}(x,t)|^{p'}(\varphi_{R}(x,t))^{-\frac{p'}{p}}dxdt,
\end{split}
\end{equation}
and
\begin{multline}\label{hol2}
\int_{\Omega}|u(x,t)||(\varphi_{R}(x,t)))_{t}|dxdt\leq \varepsilon_{1}\int_{\Omega}|u(x,t)|^{p}\varphi_{R}(x,t)dxdt\\
+C(\varepsilon_{1})\int_{\Omega_{1}}|(\varphi_{R}(x,t))_{t}|^{p'}|\varphi_{R}(x,t)|^{-\frac{p'}{p}}dxdt.
\end{multline}
Then from these facts and choosing $C_{1}\varepsilon+\varepsilon_{1}<1$, we have
\begin{multline}\label{obsh}
\int_{\Omega}|u(x,t)|^{p}\varphi_{R}(x,t) dxdt\\
\leq C\left(\int_{\Omega_{1}}(t^{\beta p'}|\L \varphi_{R}(x,t)|^{p'}+|(\varphi_{R}(x,t)))_{t}|^{p'})(\varphi_{R}(x,t))^{-\frac{p'}{p}}dxdt\right).
\end{multline}

We note that from \cite{PohVer} we have
\begin{equation*}
|\L \varphi_{R}|\leq CR^{-2},
\end{equation*}
and
\begin{equation*}
\left|\frac{\partial \varphi_{R}}{\partial t}\right|\leq CR^{-2}.
\end{equation*}
Let us denote 
$$\Theta:=\{(y,t_{1})=(\overline{\xi},\widehat{\xi},\tilde{\tau},t_{1})\in \H\times\mathbb{R}_{+}:|\overline{\xi}|^{4}+|\widehat{\xi}|^{4}+\tilde{\tau}^{2}+t_{1}^{2}\leq 2\},$$
and 
$$\mu:=|\overline{\xi}|^{4}+|\widehat{\xi}|^{4}+\tilde{\tau}^{2}+t_{1}^{2}.$$
Since the homogeneous dimension of $\H$ is equal to $Q,$ and by changing variables $R^{2}\overline{t}=t$, $R\overline{\xi}=\xi$, $R\widehat{\xi}=\tilde{\xi}$ and $R^{2}\tilde{\tau}=\tau$ in $\Omega_{1}$, we calculate
\begin{equation}\label{ocen1h}
\begin{split}
\int_{\Omega_{1}}|\varphi_{R}(x,t)|^{-\frac{1}{p-1}}\left|\frac{\partial\varphi_{R}(x,t)}{\partial t}\right|^{\frac{p}{p-1}}dxdt
&=R^{Q+2-\frac{2p}{p-1}}\int_{\Theta}|(\Phi\circ\mu)|^{-\frac{p}{p-1}}\left|\frac{\partial(\Phi\circ\mu)}{\partial t}\right|^{\frac{p}{p-1}}dydt_{1}\\&
\leq C R^{Q+2-\frac{2p}{p-1}},
\end{split}
\end{equation}
and
\begin{equation}\label{ocen3h}
\begin{split}
\int_{\Omega_{1}}t^{\beta p'}|\varphi_{R}(x,t)|^{-\frac{1}{p-1}}&\left|\L \varphi_{R}(x,t)\right|^{\frac{p}{p-1}}dxdt \\&
=R^{\frac{2\beta p}{p-1}-\frac{2p}{p-1}+Q+2}\int_{\Theta}t_{1}^{\beta p'}|(\Phi\circ\mu)|^{-\frac{p}{p-1}}\left|\L (\Phi\circ\mu)\right|^{\frac{p}{p-1}}dydt_{1}\\&
\leq C R^{\frac{2\beta p}{p-1}-\frac{2p}{p-1}+Q+2}.
\end{split}
\end{equation}

By using \eqref{ocen1h}--\eqref{ocen3h} in \eqref{obsh}, one obtains
\begin{equation}\label{ocenk4h}
\begin{split}
\int_{\Omega}|u|^p \varphi_{R} dxdt &\leq C\left(R^{Q+2-\frac{2p}{p-1}}+R^{Q+2-\frac{2p}{p-1}+\frac{2\beta p}{p-1}}\right)
\\&
\leq CR^{Q+2-\frac{2p}{p-1}+\frac{2\beta p}{p-1}}.
\end{split}
\end{equation}
Here, we choose
$$Q+2-\frac{2p}{p-1}+\frac{2\beta p}{p-1}<0,$$
that is,
$$p<1+\frac{2-2\beta}{Q+2\beta}.$$
Letting $R\rightarrow \infty$ with $p<1+\frac{2-2\beta}{Q+2\beta}$, one has
\begin{equation}
\int_{\Omega}|u(x,t)|^{p}dxdt\leq0,
\end{equation}
arriving at a contradiction.
\end{proof}

\subsection{Parabolic type systems}

Now, we will show a Liouville type result for the following system of Kirchhoff type parabolic equations in $\Q:= \H \times (0, T),$ $T>0$
\begin{equation}\label{heatsys}\small
\begin{cases}
u_{t}-M_{1}\left(t,\int\limits_{\H} |\na u|^{2}dx, \int\limits_{\H}|\na \omega|^{2}dx, \int\limits_{\H}|\L u|^{2}dx, \int\limits_{\H}|\L \omega|^{2}dx\right)\L u = |\omega|^{p},\\
\omega_{t} - M_{2}\left(t,\int\limits_{\H} |\na u|^{2}dx, \int\limits_{\H}|\na \omega|^{2}dx, \int\limits_{\H}|\L u|^{2}dx, \int\limits_{\H}|\L \omega|^{2}dx\right)\L \omega = |u|^{q},\\
u(x, 0) = u_0(x)\geq0, \,\,\,x\in\H,\\
\omega(x,0)=\omega_{0}(x)\geq0, \,\,\,x\in\H,
\end{cases}
\end{equation}
where $p,q>1$ and $M_{1},M_{2}:\mathbb{R}_{+}\times\mathbb{R}_{+}\times\mathbb{R}_{+}\rightarrow\mathbb{R}$ are  bounded functions such that
\begin{equation}\label{hsysus1}
0<M_{1}(t,\cdot,\cdot, \cdot, \cdot)\leq C_{1}t^{\beta_{1}},\,\,\forall t\in(0,\infty),\,\,0\leq\beta_{1}<1,
\end{equation}
and
\begin{equation}\label{hsysus2}
0<M_{2}(t,\cdot,\cdot, \cdot, \cdot)\leq C_{2}t^{\beta_{2}},\,\,\forall t\in(0,\infty),\,\,0\leq\beta_{2}<1.
\end{equation}

Let us give a definition of the weak solution to \eqref{heatsys} as follows.
\begin{defn}
We say that the pair $u\in  C^{1}(0,T;S^{2}_{2}(\H))\cap L^p(\Q)$ and $ \omega\in C^{1}(0,T;S^{2}_{2}(\H)) \cap L^q(\Q)$  with $p,q>1$ is a weak solution of the system \eqref{heatsys} on $\Q=\H\times(0,T)$ with the Cauchy data $(u_0, \omega_0) \in  L^1_{loc}(\H) \times L^1_{loc}(\H)$ with $u_{0},\omega_{0}\geq0$ on $\H$, if the following identities
\begin{align*}
 &\int_{\Q}|\omega|^{p} \varphi dxdt +\int_{\H} u_0(x) \varphi(x,0) dx=-\int_{\Q} u \varphi_{t} dxdt \\&
 -\int_{\Q} M_{1}\left(t,\int\limits_{\H} |\na u|^{2}dx, \int\limits_{\H}|\na \omega|^{2}dx, \int\limits_{\H}|\L u|^{2}dx, \int\limits_{\H}|\L \omega|^{2}dx\right)u \L\varphi dxdt,
\end{align*}
and
\begin{align*}
 &\int_{\Q}|u|^q \psi dxdt +\int_{\H} \omega_0(x)\psi(x,0)dx
=-\int_{\Q} \omega \psi_{t} dxdt \\&
- \int_{\Q} M_{2}\left(t,\int\limits_{\H} |\na u|^{2}dx, \int\limits_{\H}|\na \omega|^{2}dx, \int\limits_{\H}|\L u|^{2}dx, \int\limits_{\H}|\L \omega|^{2}dx\right)w \L\psi dxdt,
\end{align*}
hold for any test functions $0\leq\psi,\varphi\in C^{2,1}_{x,t}(\Q)$.
\end{defn}

\begin{thm}\label{thmheatsysblow}
Let $p,q>1$ and $Q<\max\{A_{1},A_{2}\},$ where $Q$ is the homogeneous dimension of $\H$,
\begin{equation}
A_{1}=\frac{\frac{1}{q}-\beta_{1}-\frac{\beta_{2}}{q}+\frac{1}{pq}}{\frac{1}{2qp'}+\frac{1}{2q'}},
\end{equation}
and
\begin{equation}
A_{2}=\frac{\frac{1}{p}-\beta_{2}-\frac{\beta_{1}}{p}+\frac{1}{pq}}{\frac{1}{2q'p}+\frac{1}{2p'}},
\end{equation}
where $\beta_{1},\beta_{2}\in[0,1).$
Then the system \eqref{heatsys} does not admit a nontrivial  weak solution.
\end{thm}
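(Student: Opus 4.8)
The plan is to run the test-function scheme of Theorem~\ref{heatthm} simultaneously on both equations and then iterate the two resulting estimates against each other, exactly as in the elliptic system. First I would start from the two weak identities, discard the nonnegative boundary terms $\int_{\H}u_{0}\varphi(x,0)\,dx$ and $\int_{\H}\omega_{0}\psi(x,0)\,dx$, and use \eqref{hsysus1}, \eqref{hsysus2} together with $\varphi,\psi\ge 0$ to get
\begin{equation*}
\int_{\Q}|\omega|^{p}\varphi\,dxdt\le\int_{\Q}|u||\varphi_{t}|\,dxdt+C_{1}\int_{\Q}t^{\beta_{1}}|u||\L\varphi|\,dxdt,
\end{equation*}
\begin{equation*}
\int_{\Q}|u|^{q}\psi\,dxdt\le\int_{\Q}|\omega||\psi_{t}|\,dxdt+C_{2}\int_{\Q}t^{\beta_{2}}|\omega||\L\psi|\,dxdt.
\end{equation*}
I would then take rescaled cut-offs $\varphi_{R},\psi_{R}$ of the form \eqref{testfunch} built from the Kaplan-type quantity $|\xi|^{4}+|\tilde\xi|^{4}+\tau^{2}+t^{2}$, so that the supports of $\varphi_{R},\psi_{R},\L\varphi_{R},\L\psi_{R},\partial_{t}\varphi_{R},\partial_{t}\psi_{R}$ all lie in $\Omega_{1}$, and recall the bounds $|\L\varphi_{R}|,|\partial_{t}\varphi_{R}|,|\L\psi_{R}|,|\partial_{t}\psi_{R}|\le CR^{-2}$ from \cite{PohVer}.

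Next, in the first inequality I would split $|u|=(|u|\psi_{R}^{1/q})(\psi_{R}^{-1/q})$ and apply H\"older's inequality with exponents $q,q'$ to pull out $(\int_{\Q}|u|^{q}\psi_{R}\,dxdt)^{1/q}$, and symmetrically in the second inequality with exponents $p,p'$ to pull out $(\int_{\Q}|\omega|^{p}\varphi_{R}\,dxdt)^{1/p}$. Performing the anisotropic dilation $R\overline\xi=\xi$, $R\widehat\xi=\tilde\xi$, $R^{2}\tilde\tau=\tau$, $R^{2}\overline t=t$ (under which $dxdt$ scales like $R^{Q+2}$ and $t^{\beta_{i}}$ like $R^{2\beta_{i}}$), and noting that for $R\ge 1$ the contributions carrying the factor $t^{\beta_{i}}$ are the larger ones because $\beta_{1},\beta_{2}\ge 0$, I expect to arrive at
\begin{equation*}
\int_{\Q}|\omega|^{p}\varphi_{R}\,dxdt\le CR^{a_{1}}\Big(\int_{\Q}|u|^{q}\psi_{R}\,dxdt\Big)^{1/q},\qquad
\int_{\Q}|u|^{q}\psi_{R}\,dxdt\le CR^{a_{2}}\Big(\int_{\Q}|\omega|^{p}\varphi_{R}\,dxdt\Big)^{1/p},
\end{equation*}
with $a_{1}=2\beta_{1}-2+\tfrac{Q+2}{q'}$ and $a_{2}=2\beta_{2}-2+\tfrac{Q+2}{p'}$.

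Substituting each estimate into the other (legitimate once the left-hand integral is finite, which holds by the local integrability built into the notion of weak solution; if it vanishes the conclusion is immediate) yields the self-improving bounds
\begin{equation*}
\Big(\int_{\Q}|\omega|^{p}\varphi_{R}\,dxdt\Big)^{1-\frac{1}{pq}}\le CR^{a_{1}+a_{2}/q},\qquad
\Big(\int_{\Q}|u|^{q}\psi_{R}\,dxdt\Big)^{1-\frac{1}{pq}}\le CR^{a_{2}+a_{1}/p}.
\end{equation*}
Using the identity $\tfrac{1}{q'}+\tfrac{1}{qp'}=1-\tfrac{1}{pq}$, a direct computation gives $a_{1}+a_{2}/q=Q(1-\tfrac{1}{pq})-2(\tfrac1q-\beta_{1}-\tfrac{\beta_{2}}{q}+\tfrac{1}{pq})$, so that $a_{1}+a_{2}/q<0\iff Q<A_{1}$, and symmetrically $a_{2}+a_{1}/p<0\iff Q<A_{2}$. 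Hence the hypothesis $Q<\max\{A_{1},A_{2}\}$ forces at least one of the two $R$-exponents to be negative; letting $R\to\infty$ in the corresponding bound and using monotone convergence gives $\int_{\Q}|\omega|^{p}\,dxdt=0$ or $\int_{\Q}|u|^{q}\,dxdt=0$, after which the coupling in the estimates above makes the other integral vanish as well, so $u\equiv\omega\equiv 0$.

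The main obstacle is entirely computational: carefully bookkeeping the powers of $R$ produced by H\"older's inequality combined with the anisotropic rescaling, and then verifying the algebraic identity that turns $a_{1}+a_{2}/q<0$ into $Q<A_{1}$ (and its analogue for $A_{2}$). Note that, because the hypothesis is the strict inequality $Q<\max\{A_{1},A_{2}\}$, the borderline cases $Q=A_{1}$ and $Q=A_{2}$ do not occur, so the additional Lebesgue dominated convergence refinement needed in the elliptic Theorem~\ref{ellthm} is not required here.
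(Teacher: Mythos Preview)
Your proposal is correct and follows essentially the same route as the paper: discard the nonnegative initial-data terms, apply H\"older with the splitting $|u|=|u|\psi_{R}^{1/q}\cdot\psi_{R}^{-1/q}$ (resp.\ the symmetric one for $|\omega|$), rescale by the anisotropic dilation $R^{2}\overline t=t$, $R\overline\xi=\xi$, $R\widehat\xi=\tilde\xi$, $R^{2}\tilde\tau=\tau$, and then substitute the two estimates into each other to obtain the self-improving bounds with exponents $a_{1}+a_{2}/q$ and $a_{2}+a_{1}/p$, which coincide with the paper's $\theta_{1}+\theta_{2}/q$ and $\theta_{2}+\theta_{1}/p$. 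Your remark that the strict hypothesis $Q<\max\{A_{1},A_{2}\}$ makes the borderline refinement unnecessary, and your observation that once one of $\int|\omega|^{p}$ or $\int|u|^{q}$ vanishes the coupling forces the other to vanish as well, are both accurate and in fact slightly more explicit than the paper's presentation.
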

\begin{proof}
Firstly, let us define the test functions
\begin{equation}
\varphi_{R}=\Phi\left(\frac{|
\xi|^{4}+|\tilde{\xi}|^{4}+\tau^{2}+t^{2}}{R^{4}}\right), \,\,\,\,x=(\xi,\tilde{\xi},\tau)\in\H,\,\,t>0,\,\,\,R>0,
\end{equation}
and 
\begin{equation}
\psi_{R}=\Psi\left(\frac{|
\xi|^{4}+|\tilde{\xi}|^{4}+\tau^{2}+t^{2}}{R^{4}}\right), \,\,\,\,x=(\xi,\tilde{\xi},\tau)\in\H,\,\,t>0,\,\,\,R>0,
\end{equation}
with $\Psi,\Phi\in C^{\infty}_{0}(\mathbb{R}_{+})$ and the following property
\[
\Psi(r),\Phi(r) =
\begin{cases}
1, & \text{if $ 0\leq r \leq 1 $,} \\
\searrow, & \text{if $1< r\leq 2$,}\\
0, & \text{if $r>  2$,}
\end{cases}
\]
with $\text{supp} \,\Psi=\text{supp} \,\Phi=[0,2].$ We note that  $\text{supp}(\psi_{R})$, $\text{supp}(\varphi_{R})$, $\text{supp}(\L\psi_{R})$ and $\text{supp}(\L\varphi_{R})$ are subsets of $\Omega_{1}$, where
\begin{equation}
\Omega_{1}:=\{(x,t)=(\xi,\tilde{\xi},\tau,t)\in\H\times
\mathbb{R}_{+}: |
\xi|^{4}+|\tilde{\xi}|^{4}+\tau^{2}+t^{2}\leq2R^{4}\}.
\end{equation}

Assume that there exists a weak solution to \eqref{heatsys}, and let $M_{1}$ and $M_2$ satisfies the conditions \eqref{hsysus1} and \eqref{hsysus2}, then
\begin{equation}\label{weak1}
\begin{split}
&\int_{\Omega} |\omega|^{p}\varphi_{R}dxdt+\int_{\H}u_{0}(x)\varphi_{R}(x,0)dx
\\& \leq\int_{\Omega}|u||(\varphi_{R})_{t}|dxdt+C_{1}\int_{\Omega}t^{\beta_{1}}|u||\L\varphi_{R}|dxdt\\&
=\int_{\Omega_{1}}|u||(\varphi_{R})_{t}|dxdt+C_{1}\int_{\Omega_{1}}t^{\beta_{1}}|u||\L\varphi_{R}|dxdt,
\end{split}
\end{equation}
and
\begin{equation*}\label{weak2}
\begin{split}
&\int_{\Omega}|u|^{q}\psi_{R} dxdt+\int_{\H}\omega_{0}(x)\psi_{R}(x,0)dx
\\& \leq\int_{\Omega}|\omega||(\psi_{R})_{t}|dxdt+C_{2}\int_{\Omega}t^{\beta_{2}}|\omega||\L\psi_{R}|dxdt\\&
=\int_{\Omega_{1}}|\omega||(\psi_{R})_{t}|dxdt+C_{2}\int_{\Omega_{1}}t^{\beta_{2}}|\omega||\L\psi_{R}|dxdt.
\end{split}
\end{equation*}
By using H\"{o}lder's inequality, we have
\begin{equation*}
\begin{split}
    \int_{\Omega_{1}}|u||(\varphi_{R})_{t}| dxdt&=\int_{\Omega_{1}}|u|\psi_{R}^{\frac{1}{q}}|(\varphi_{R})_{t}| \psi_{R}^{-\frac{1}{q}}dxdt\\&
\leq\left(\int_{\Omega_{1}}|u|^{q}\psi_{R} dxdt\right)^{\frac{1}{q}}\left(\int_{\Omega_{1}}|(\varphi_{R})_{t}|^{q'}\psi_{R}^{-\frac{q'}{q}}dxdt\right)^{\frac{1}{q'}}\\&
\leq \left(\int_{\Omega}|u|^{q}\psi_{R} dxdt\right)^{\frac{1}{q}}\left(\int_{\Omega_{1}}|(\varphi_{R})_{t}|^{q'}\psi_{R}^{-\frac{q'}{q}}dxdt\right)^{\frac{1}{q'}},
\end{split}
\end{equation*}

and
\begin{equation*}
\int_{\Omega_{1}}t^{\beta_{1}}|u||\L\varphi_{R}| dxdt\leq \left(\int_{\Omega_{1}}|u|^{q} \psi_{R} dxdt\right)^{\frac{1}{q}}\left(\int_{\Omega_{1}} t^{\beta_{1}q'} |\L\varphi_{R}|^{q'}\psi_{R}^{-\frac{q'}{q}}dxdt\right)^{\frac{1}{q'}}.
\end{equation*}
By using \eqref{weak1} and the previous estimates, we obtain
\begin{equation}\label{ocenkasys1}
\begin{split}
\int_{\Omega} |\omega|^{p}\varphi_{R}dxdt&\leq\int_{\Omega} |\omega|^{p}\varphi_{R}dxdt
+\int_{\H}u_{0}(x)\varphi_{R}(x,0)dxdt
\\&\leq\int_{\Omega}|u|(\varphi_{R})_{t}dxdt
+\int_{\Omega}t^{\beta_{1}}|u||\L\varphi_{R} |dxdt
\\&\leq \left(\int_{\Omega}|u|^{q} \psi_{R} dxdt\right)^{\frac{1}{q}}I_{1},
\end{split}
\end{equation}
where
\begin{equation}\label{i123}
I_{1}=\left(\int_{\Omega_{1}}t^{\beta_{1}q'}|\L\varphi_{R}|^{q'}\psi_{R}^{-\frac{q'}{q}}dxdt\right)^{\frac{1}{q'}}
+\left(\int_{\Omega_{1}}|(\varphi_{R})_{t}|^{q'}\psi_{R}^{-\frac{q'}{q}}dxdt\right)^{\frac{1}{q'}}.
\end{equation}
Analogous calculations show that
\begin{equation}\label{ocenkasys2}
\int_{\Omega}|u|^{q}\psi_{R} dxdt\leq\left(\int_{\Omega}|\omega|^{p}\varphi_{R} dxdt\right)^{\frac{1}{p}}I_{2},
\end{equation}
where
\begin{equation*}\label{i2}
I_{2}=\left(\int_{\Omega_{1}}t^{\beta_{2}p'}|\L\psi_{R}|^{p'}\varphi_{R}^{-\frac{p'}{p}}dxdt\right)^{\frac{1}{p'}}
+\left(\int_{\Omega_{1}}|(\psi_{R})_{t}|^{p'}\varphi_{R}^{-\frac{p'}{p}}dxdt\right)^{\frac{1}{p'}}.
\end{equation*}
By using \eqref{ocenkasys1} and \eqref{ocenkasys2}, one obtains
\begin{equation*}
\begin{split}
\int_{\Omega}|\omega|^{p}\varphi_{R}dxdt\leq\left(\int_{\Omega}|u|^{q} \psi_{R} dxdt\right)^{\frac{1}{q}}I_{2}
\leq\left(\int_{\Omega}|\omega|^{p}\varphi_{R} dxdt\right)^{\frac{1}{pq}}I^{\frac{1}{q}}_{2}I_{1},
\end{split}
\end{equation*}
implying 
\begin{equation*}
\left(\int_{\Omega} |\omega(x,t)|^{p}\varphi_{R}(x,t)dxdt\right)^{1-\frac{1}{pq}}\leq I_{1}I^{\frac{1}{q}}_{2}.
\end{equation*}
Similarly, we have
\begin{equation*}
\left(\int_{\Omega} |u(x,t)|^{q}\psi_{R}(x,t)dxdt\right)^{1-\frac{1}{pq}}\leq I^{\frac{1}{p}}_{1}I_{2}.
\end{equation*}
Let us denote 
$$\Theta:=\{(y,t_{1})=(\overline{\xi},\widehat{\xi},\tilde{\tau},t_{1})\in \H\times\mathbb{R}_{+}:0\leq |\overline{\xi}|^{4}+|\widehat{\xi}|^{4}+\tilde{\tau}^{2}+t_{1}^{2}\leq 2\},$$
and 
$$\mu:=|\overline{\xi}|^{4}+|\widehat{\xi}|^{4}+\tilde{\tau}^{2}+t_{1}^{2}.$$
By using the homogeneous dimension $Q$ of $\H$  and by choosing variables $R^{2}t_{1}=t$, $R\overline{\xi}=\xi$, $R\widehat{\xi}=\tilde{\xi}$ and $R^{2}\tilde{\tau}=\tau$ in the domain $\Omega_{1}$, one calculates
\begin{equation*}
\begin{split}
\int_{\Omega_{1}}|(\psi_{R})_{t}|^{p'}\varphi_{R}^{-\frac{p'}{p}}dxdt&=R^{-2p'+2+Q}\int_{\Theta}|(\Psi\circ\mu)_{t_{1}}|^{p'}(\Phi\circ\mu)^{-\frac{p'}{p}}dy dt_{1}\\&
\leq CR^{-2p'+2+Q},
\end{split}
\end{equation*}
similarly, we get
\begin{equation*}
\int_{\Omega_{1}}|(\varphi_{R})_{t}|^{q'}\psi_{R}^{-\frac{q'}{q}}dxdt\leq CR^{-2q'+2+Q},
\end{equation*}

\begin{equation*}
\int_{\Omega_{1}}t^{\beta_{1}q'}|\L\varphi_{R}|^{q'}\psi_{R}^{-\frac{q'}{q}}dxdt\leq
CR^{-2q'+2+Q+2\beta_{1}q'},
\end{equation*}
and
\begin{equation*}
\int_{\Omega_{1}}t^{\beta_{2}p'}|\L\psi_{R}|^{p'}\varphi_{R}^{-\frac{p'}{p}}dxdt\leq
CR^{-2p'+2+Q+2\beta_{2}p'}.
\end{equation*}

By using the last estimates in \eqref{i123}, we obtain
\begin{equation*}
I_{1}\leq CR^{-2+\frac{2+Q}{q'}+2\beta_{1}}.
\end{equation*}
Analogously, with $R^{2}t_{1}=t$, $R\overline{\xi}=\xi$, $R\widehat{\xi}=\tilde{\xi}$ and $R^{2}\tilde{\tau}=\tau$ in the domain $\Omega_{1}$, we get
\begin{equation}
I_{2}\leq CR^{-2+\frac{2+Q}{p'}+2\beta_{2}}.
\end{equation}
By using these facts, one has
\begin{equation}
\left(\int_{\Omega}|\omega(x,t)|^{p}\varphi_{R}(x,t)dxdt\right)^{1-\frac{1}{pq}}\leq I_{1}I^{\frac{1}{q}}_{2}\\
\leq C R^{\theta_{1}+\frac{\theta_{2}}{q}},
\end{equation}
where
$\theta_{1}=-2+\frac{2+Q}{q'}+2\beta_{1}$ and $\theta_{2}=-2+\frac{2+Q}{p'}+2\beta_{2}.$ 

Now we show
$$
\theta_{1}+\frac{\theta_{2}}{q}<0.
$$
By calculating, we have
$$
-2+\frac{2+Q}{q'}+2\beta_{1}-\frac{2}{q}+\frac{2\beta_{2}}{q}+\frac{2+Q}{p'q}< 0.
$$
Thus, we can require
\begin{equation}
 Q<\frac{\frac{1}{q}-\beta_{1}-\frac{\beta_{2}}{q}+\frac{1}{pq}}{\frac{1}{2qp'}+\frac{1}{2q'}}.
\end{equation}

Repeating the arguments above, we obtain
\begin{equation}
\left(\int_{\Omega} |u(x,t)|^{q}\psi_{R}(x,t)dxdt\right)^{1-\frac{1}{pq}}\leq I^{\frac{1}{p}}_{1}I_{2}\leq C R^{\theta_{2}+\frac{\theta_{1}}{p}},
\end{equation}
and we can require
\begin{equation}
Q<\frac{\frac{1}{p}-\beta_{2}-\frac{\beta_{1}}{p}+\frac{1}{pq}}{\frac{1}{2q'p}+\frac{1}{2p'}}.
\end{equation}
Finally, letting $R\rightarrow\infty$ we arrive at a contradiction.
\end{proof}

\begin{cor}
Let $p=q$, $u=\omega$, $\beta_{1}=\beta_{2}$ and $M_{1}=M_{2}$. Then we get the results of Theorem \ref{heatthm}.
\end{cor}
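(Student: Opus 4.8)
The plan is to specialize the hypothesis $Q<\max\{A_{1},A_{2}\}$ of Theorem~\ref{thmheatsysblow} to the symmetric situation $p=q$, $\beta_{1}=\beta_{2}=:\beta$, $M_{1}=M_{2}$, $u=\omega$, and to check that it collapses to the scalar Fujita-type condition $1<p<p_{c}$ appearing in Theorem~\ref{heatthm}. First I would observe that $A_{1}$ and $A_{2}$ are obtained from one another by the substitution $(p,\beta_{1})\leftrightarrow(q,\beta_{2})$, so under our assumptions they coincide and $\max\{A_{1},A_{2}\}=A_{1}$. Next I would simplify $A_{1}$ using $p'=p/(p-1)$: both the numerator $\frac{1}{p}-\beta-\frac{\beta}{p}+\frac{1}{p^{2}}$ and the denominator $\frac{1}{2pp'}+\frac{1}{2p'}$ carry the common factor $\frac{p+1}{p}$, and after cancellation one gets
\[
A_{1}=\frac{2(1-\beta p)}{p-1}.
\]

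Then I would rewrite the inequality $Q<A_{1}$: clearing denominators gives $Q(p-1)<2(1-\beta p)$, equivalently $p(Q+2\beta)<Q+2$, equivalently
\[
p<\frac{Q+2}{Q+2\beta}=1+\frac{2-2\beta}{Q+2\beta}=p_{c}.
\]
Hence in this symmetric case the hypothesis of Theorem~\ref{thmheatsysblow} is exactly $1<p<p_{c}$, the hypothesis of Theorem~\ref{heatthm}. Finally, setting $u=\omega$ and $M_{1}=M_{2}=M$ in \eqref{heatsys} turns both equations into the single scalar equation \eqref{heat} (with $\beta_{1}=\beta$), so the nonexistence conclusion delivered by Theorem~\ref{thmheatsysblow} is precisely the nonexistence conclusion of Theorem~\ref{heatthm}, which is what the corollary claims.

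There is essentially no obstacle here: the whole content is the algebraic identity for $A_{1}$, and the only point requiring a moment of care is the cancellation of the common factor $(p+1)/p$ in numerator and denominator, which is what makes the quotient reduce cleanly to the scalar threshold. For completeness I would also note that $1-\beta p>0$ on the range $p<p_{c}$ (using $\beta\in[0,1)$), so that $A_{1}>0$ and the condition $Q<A_{1}$ is non-vacuous, in agreement with the case analysis $l_{1},l_{2}<0$ (and the borderline case) already handled in the proof of Theorem~\ref{thmheatsysblow}.
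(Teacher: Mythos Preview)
Your proposal is correct and follows essentially the same route as the paper: both arguments simply specialize the condition $Q<\max\{A_1,A_2\}$ to $p=q$, $\beta_1=\beta_2$, simplify the resulting quotient (the paper writes the denominator as $\tfrac{1}{2p'}(\tfrac{1}{p}+1)$ and cancels the common factor $\tfrac{p+1}{p}$ exactly as you do), and arrive at $Q<\tfrac{2(1-\beta p)}{p-1}$, which is rewritten as $p<1+\tfrac{2-2\beta}{Q+2\beta}=p_c$. Your added remarks on the symmetry $A_1=A_2$ and the positivity of $1-\beta p$ are fine but not needed beyond what the paper does.
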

\begin{proof}
By simple calculations we have
$$
Q<\frac{\frac{1}{p}-\beta_{1}-\frac{\beta_{1}}{p}+\frac{1}{p^{2}}}{\frac{1}{2p'}\left(\frac{1}{p}+1\right)}=\frac{2-2p\beta_{1}}{p-1},
$$
which means
$$
p< 1+\frac{2-2\beta_{1}}{Q+2\beta_{1}},
$$
implying the statement. 
\end{proof}

\subsection{Pseudo-parabolic type equations and systems}

In this subsection we show nonexistence results for pseudo-parabolic equations and systems on the Heisenberg groups. We consider the following Cauchy problem for the Kirchhoff type pseudo-parabolic equation
\begin{equation}
\label{pseudoheat}
\begin{cases}
u_{t} -M\left(t,\int_{\H}|\na u|^{2}dx,\int_{\H}|\L u|^{2}dx\right)\L u- \L u_{t}= |u|^{p},\\
u(x, 0) = u_0(x)\geq0, \,\,\,x\in\H,
\end{cases}
\end{equation}
where $(x, t) \in\Q:= \H \times (0, T),\,T>0,\, p > 1$, $M:\mathbb{R}_{+}\times\mathbb{R}_{+}\times\mathbb{R}_{+}\rightarrow\mathbb{R}$ is a bounded function such that
\begin{equation}\label{pseudoMh}
0<M(\cdot,\cdot,\cdot)\leq C_{0}.
\end{equation}

We give a definition of the weak solution of the equation \eqref{pseudoheat}.
\begin{defn}
We say that $u\in C^{1}(0,T;S^{2}_{2}(\H))\cap L^{p}(\Q)$ with $p>1$ is a weak solution to \eqref{pseudoheat} on $
\Q$ with the initial data
$0\leq u_{0}(x) \in L^1_{loc}(\H), $ if $ u\in L^p_{loc}(\Q) $ and satisfies
\begin{equation}
\label{pseudoweakheat}
\begin{split}
\int_{\Q}|u|^p\varphi dx dt &+ \int_{\H} u_{0}(x)\varphi(x,0)dx =-\int_{\Q} u \varphi_{t} dx dt \\&
 - \int_{\Q} M\left(t,\int_{\H}|\na u|^{2}dx,\int_{\Omega}|\L u|^{2}dx\right) u\L \varphi dx dt\\&
 +\int_{\Q} u \L \varphi_{t} dx dt+\int_{\H}u_{0}(x)\L\varphi(x,0)dx,
\end{split}
\end{equation}
for any test function $0\leq\varphi\in C^{2,1}_{x,t}(\Q)$.
\end{defn}

Let us formulate one of the main results of this section.
\begin{thm}
\label{pseudoheatthm}
Suppose that $M(\cdot,\cdot,\cdot)$ satisfies the condition \eqref{pseudoMh}.
If the rate $p$ is from the following interval
\begin{equation}
1<p\leq p_{c}=1+\frac{2}{Q},
\end{equation}
with the homogeneous dimension $Q$ of $\H$, then the problem \eqref{pseudoheat} admits no global  weak solution.
\end{thm}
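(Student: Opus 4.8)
The plan is to adapt the test-function argument used for the elliptic and parabolic equations, treating the extra pseudo-parabolic term $\L u_t$ by integration by parts so that it falls on the test function as $\L\varphi_t$. Starting from the weak formulation \eqref{pseudoweakheat}, since $u_0\geq 0$ and $\varphi_R\geq 0$, I would drop the nonnegative initial-data terms $\int_\H u_0\varphi_R(x,0)\,dx$ and $\int_\H u_0\L\varphi_R(x,0)\,dx$ (the latter requires a sign remark, but one can instead simply bound it in absolute value and absorb it, or choose $\Phi$ so that $\L\varphi_R(x,0)$ is controlled) to obtain
\begin{equation*}
\int_{\Q}|u|^p\varphi_R\,dxdt \leq \int_{\Q}|u||(\varphi_R)_t|\,dxdt + C_0\int_{\Q}|u||\L\varphi_R|\,dxdt + \int_{\Q}|u||\L(\varphi_R)_t|\,dxdt.
\end{equation*}
Then I apply $\varepsilon$-Young's inequality to each of the three terms on the right, splitting $|u| = |u|\varphi_R^{1/p}\varphi_R^{-1/p}$, absorbing the resulting $\varepsilon\int|u|^p\varphi_R$ terms into the left-hand side, to arrive at
\begin{equation*}
\int_{\Q}|u|^p\varphi_R\,dxdt \leq C\int_{\Omega_1}\left(|(\varphi_R)_t|^{p'}+|\L\varphi_R|^{p'}+|\L(\varphi_R)_t|^{p'}\right)\varphi_R^{-p'/p}\,dxdt.
\end{equation*}

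Next I would estimate the three integrand terms using the scaling properties already recorded in the excerpt: from \eqref{poxxell} one has $|\L\varphi_R|\leq CR^{-2}$ and $|(\varphi_R)_t|\leq CR^{-2}$, and by the same computation applied to $(\varphi_R)_t$ one gets $|\L(\varphi_R)_t|\leq CR^{-4}$ (two horizontal derivatives and one $t$-derivative each cost $R^{-2}$, so this is the lower-order term). Performing the anisotropic change of variables $R\overline{\xi}=\xi$, $R\widehat{\xi}=\tilde{\xi}$, $R^2\tilde{\tau}=\tau$, $R^2 t_1 = t$, which contributes a factor $R^{Q+2}$ from the measure (here $Q=2n+2$, plus $2$ from the time variable), the dominant contribution comes from the $|\L\varphi_R|^{p'}$ and $|(\varphi_R)_t|^{p'}$ terms, giving
\begin{equation*}
\int_{\Q}|u|^p\varphi_R\,dxdt \leq C R^{Q+2-2p'} = C R^{Q+2-\frac{2p}{p-1}},
\end{equation*}
since the $\L(\varphi_R)_t$ term scales like $R^{Q+2-4p'}$, which is smaller. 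The exponent $Q+2-\frac{2p}{p-1}<0$ is exactly equivalent to $p<1+\frac{2}{Q}$, so in the subcritical range letting $R\to\infty$ forces $\int_\Q|u|^p\,dxdt=0$, a contradiction.

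For the critical case $p=p_c=1+\frac{2}{Q}$, I would follow the refinement used at the end of the proof of Theorem \ref{ellthm}: from the subcritical bound one gets $\int_\Q|u|^p\varphi_R\,dxdt\leq C$ uniformly in $R$, hence by dominated convergence $|u|^p\in L^1(\Q)$ and $\lim_{R\to\infty}\int_{\overline{\Omega}_R}|u|^p\varphi_R\,dxdt=0$ over the annular region $\overline{\Omega}_R=\{R^4\leq |\xi|^4+|\tilde\xi|^4+\tau^2+t^2\leq 2R^4\}$. Since the derivatives of $\varphi_R$ are supported in $\overline{\Omega}_R$, I re-run the Young/Hölder step restricting all the right-hand integrals to $\overline{\Omega}_R$, which in the critical case produces $\int_\Q|u|^p\varphi_R\,dxdt\leq C\big(\int_{\overline{\Omega}_R}|u|^p\varphi_R\,dxdt\big)^{1/p}$, and letting $R\to\infty$ gives $\int_\Q|u|^p\,dxdt=0$, again a contradiction.

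The main obstacle I anticipate is the handling of the term $\int_\H u_0(x)\L\varphi_R(x,0)\,dx$: unlike $\int_\H u_0\varphi_R(x,0)\,dx$ it does not have a definite sign, so one cannot simply discard it. The cleanest fix is to observe that $\varphi_R(x,0)=\Phi\big((|\xi|^4+|\tilde\xi|^4+\tau^2)/R^4\big)\equiv 1$ on the set where $|\xi|^4+|\tilde\xi|^4+\tau^2\leq R^4$, so $\L\varphi_R(\cdot,0)$ is supported only where this quantity lies between $R^4$ and $2R^4$, with $|\L\varphi_R(\cdot,0)|\leq CR^{-2}\to 0$; combined with $u_0\in L^1_{loc}$ one shows this boundary term tends to $0$ as $R\to\infty$ (or is dominated by a vanishing quantity after a change of variables, using a local integrability bound on $u_0$), so it does not affect the final limit. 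Apart from this bookkeeping point, the argument is a routine, if careful, extension of the parabolic case with one additional mixed-order term that is harmless because it scales more favorably.
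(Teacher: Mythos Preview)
Your proposal is correct and follows essentially the same route as the paper: the paper also bounds the three terms $|(\varphi_R)_t|$, $|\L\varphi_R|$, $|\L(\varphi_R)_t|$ via Young's inequality to obtain $\int_\Q|u|^p\varphi_R\,dxdt\leq C\big(R^{Q+2-\frac{2p}{p-1}}+R^{Q+2-\frac{4p}{p-1}}+\int_\H u_0|\L\varphi_R(x,0)|\,dx\big)\leq C(R^{Q+2-\frac{2p}{p-1}}+R^{-2})$, then lets $R\to\infty$ for $p<p_c$ and invokes the H\"older refinement for $p=p_c$. Your explicit discussion of why the boundary term $\int_\H u_0\L\varphi_R(x,0)\,dx$ is harmless (support in the annulus, size $O(R^{-2})$) is exactly what the paper uses implicitly when it writes $\int_{\Omega_1}u_0|\L\varphi_R(x,0)|\,dx\leq CR^{-2}$.
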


\begin{proof}
Let us give a short proof of Theorem \ref{pseudoheatthm}. 
Here, we choose the test functions as follows
\begin{equation}
\varphi_R (x, t) = \Phi \left(\frac{|\xi|^{4}+|\tilde{\xi}|^{4}+\tau^{2}+t^{2}}{R^{4}}\right), \,\,x=(\xi,\tilde{\xi},\tau)\in \H,\,\,t>0,\,\,R>0,
\end{equation}
with $\Phi\in C^{\infty}_{0}(\mathbb{R}_{+})$ satisfying
\[
\Phi(r) =
\begin{cases}
1, & \text{if $ 0\leq r \leq 1 $,} \\
\searrow, & \text{if $1< r\leq 2$,}\\
0, & \text{if $r>  2$.}
\end{cases}
\]

Note that $\text{supp}(\varphi_{R})$ and $\text{supp}(\L\varphi_{R})$ are subsets of $\Omega_{1}$, where
\begin{equation}
\Omega_{1}:=\{(x,t)\in\H
\times \mathbb{R}_{+}:|\xi|^{4}+|\tilde{\xi}|^{4}+\tau^{2}+t^{2}\leq2R^{4}\}\,\,.
\end{equation}

By using \eqref{pseudoMh} and Young's inequality, from the identity  \eqref{pseudoweakheat} we obtain
\begin{equation}\label{pseudoheatocen1}
\begin{split}
\int_{\Q}|u|^p\varphi_{R} dx dt&
\leq C\left(A_{p}(\varphi_{R})+B_{p}(\varphi_{R})+\int_{\H}u_{0}|\L \varphi_{R}(x,0)|dx\right),
\end{split}
\end{equation}
where
\begin{equation}
\label{35}
\begin{split}
&A_{p}(\varphi_{R})=\int_{\Q_{1}}|\varphi_{R}|^{-\frac{1}{p-1}}(|(\varphi_{R})_{t}|^{\frac{p}{p-1}}+|\L\varphi_{R}|^{\frac{p}{p-1}})dxdt,
\\&B_{p}(\varphi_{R})=\int_{\Q_{1}}|\varphi_{R}|^{-\frac{1}{p-1}}|\L(\varphi_{R})_{t}|^{\frac{p}{p-1}}dxdt.
\end{split}
\end{equation}
Then  we have
\begin{equation*}
\left|\frac{\partial \varphi_{R}}{\partial t}\right|\leq CR^{-2},
\end{equation*}
and
\begin{equation*}
|\L (\varphi_{R})_{t}|\leq CR^{-4}.
\end{equation*}
Now, by using property of homogeneous dimension of $\H$ and by choosing variables $R^{2}\overline{t}=t$, $R\overline{\xi}=\xi$, $R\widehat{\xi}=\tilde{\xi}$ and $R^{2}\tilde{\tau}=\tau$ in the domain $\Omega_{1}$, from \eqref{35} we obtain
\begin{equation}
\begin{split}
&|A_{p}(\varphi_{R})|\leq CR^{Q+2-\frac{2p}{p-1}},
\\&|B_{p}(\varphi_{R})|\leq CR^{Q+2-\frac{4p}{p-1}}.
\end{split}
\end{equation}
Thus, we conclude
\begin{equation*}
\begin{split}
\int_{\Q}|u|^p\varphi_{R} dx dt&\leq \int_{\Q}|u|^p\varphi_{R} dx dt + \int_{\H} u_{0}(x)\varphi_{R}(x,0)dx
\\&\leq C\left(R^{Q+2-\frac{2p}{p-1}}+R^{Q+2-\frac{4p}{p-1}}+\int_{\H}u_{0}|\L \varphi_{R}(x,0)|dx\right)
\\&\leq C\left(R^{Q+2-\frac{2p}{p-1}}+\int_{\Omega_{1}}u_{0}|\L \varphi_{R}(x,0)|dx\right)
\\&\leq CR^{Q+2-\frac{2p}{p-1}}+CR^{-2}.
\end{split}
\end{equation*}

Finally, by taking $p<1+\frac{2}{Q}$ and letting $R\rightarrow\infty$, one concludes
$$
\int_\Omega |u|^p  dxdt\leq0,
$$
that is, $u=0$. This is a contradiction.

The case $p=1+\frac{2}{Q}$ can be dealt with H\"{o}lder's inequality instead of Young's inequality in \eqref{pseudoheatocen1}.
\end{proof}

As the last problem of this section, we consider the system of the Kirchhoff type pseudo-parabolic equations
\begin{equation}
\label{pseudoheatsys}
\begin{split}\begin{cases}
&u_{t} - M_{1}\left(t,\int\limits_{\H} |\na u|^{2}dx, \int\limits_{\H}|\na \omega|^{2}dx, \int\limits_{\H}|\L u|^{2}dx, \int\limits_{\H}|\L \omega|^{2}dx\right)\L u\\&{\,\,\,\,\,\,}-\L u_{t} = |\omega|^{q}, \,\, (x, t) \in\Q,\\&
\omega_{t} - M_{2}\left(t,\int\limits_{\H} |\na u|^{2}dx, \int\limits_{\H}|\na \omega|^{2}dx, \int\limits_{\H}|\L u|^{2}dx, \int\limits_{\H}|\L \omega|^{2}dx\right)\L \omega\\& {\,\,\,\,\,\,}-\L \omega_{t} = |u|^{p},   \,\, (x, t) \in\Q,\\&
u(x, 0) = u_0(x)\geq0, \,\,\,x\in\H,\\&
\omega(x,0)=\omega_{0}(x)\geq0, \,\,\,x\in\H,\end{cases}
\end{split}
\end{equation}
where  $\Q:= \H \times (0, T),$ $T>0$, $p,q>1$ and $M_{1},M_{2}:\mathbb{R}_{+}\times\mathbb{R}_{+}\times\mathbb{R}_{+}\rightarrow\mathbb{R}$ are  bounded functions such that
\begin{equation}
\label{pseudohsysus1}
0<M_{1}(\cdot,\cdot,\cdot,\cdot,\cdot)\leq C_{1},
\end{equation}
and
\begin{equation}
\label{pseudohsysus2}
0<M_{2}(\cdot,\cdot,\cdot,\cdot,\cdot)\leq C_{2}.
\end{equation}

We formulate a definition of the weak solution to \eqref{pseudoheatsys} in the following form.
\begin{defn}
We say that the pair of functions $(u, \omega)\in C^{1}(0,T;S^{2}_{2}(\H))\cap L^{p}(\Q)\times C^{1}(0,T;S^{2}_{2}(\H))\cap L^{q}(\Q)$ with $p,q>1$ is a weak solution of the Cauchy problem \eqref{pseudoheatsys} on $\Q$ with the initial data $(u(x, 0), \omega(x,0)) = (u_0, \omega_0) \in  L^1_{loc}(\H) \times L^1_{loc}(\H)$ and $u_0, \omega_0\geq0$, if\\
\begin{align*}
 &\int_{\Q} |\omega|^q \varphi dxdt +\int_{\H} u_0(x) \varphi(x,0) dx=-\int_{\Q} u \varphi_{t} dxdt  +\int_{\H}u_{0}(x)\L\varphi(x,0)dx\\&
 -\int_{\Q} M_{1}\left(t,\int\limits_{\H} |\na u|^{2}dx, \int\limits_{\H}|\na \omega|^{2}dx, \int\limits_{\H}|\L u|^{2}dx, \int\limits_{\H}|\L \omega|^{2}dx\right)u \L\varphi dxdt \\&+\int_{\Q}u\L\varphi_{t}dxdt,
\end{align*}
and\\
\begin{align*}
&\int_{\Q}|u|^p \psi dxdt +\int_{\H} \omega_0(x)\psi(x,0)dx
=-\int_{\Q} \omega \psi_{t} dxdt+\int_{\H}\omega_{0}(x)\L\psi(x,0)dx \\&
- \int_{\Q} M_{2}\left(t,\int\limits_{\H} |\na u|^{2}dx, \int\limits_{\H}|\na \omega|^{2}dx, \int\limits_{\H}|\L u|^{2}dx, \int\limits_{\H}|\L \omega|^{2}dx\right)w \L\psi dxdt\\&+\int_{\Q} w\L\psi_{t} dxdt,
\end{align*}
for any test functions $0\leq\psi,\varphi \in C^{2,1}_{x,t}(\Q)$.
\end{defn}

\begin{thm}
Assume that $p,q>1$. 
Let $Q$ be a homogeneous dimension of $\H$ such that  $Q\leq \max\{A_{1},A_{2}\},$ where
\begin{equation}
A_{1}=\frac{\frac{1}{pq}+\frac{1}{q}}{\frac{1}{2qp'}+\frac{1}{2q'}} \,\,\, \hbox{and} \,\,\, A_{2}=\frac{\frac{1}{pq}+\frac{1}{p}}{\frac{1}{2q'p}+\frac{1}{2p'}}.
\end{equation} 
Then the system \eqref{pseudoheatsys} does not admit nontrivial weak solution.
\end{thm}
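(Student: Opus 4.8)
The plan is to mimic the proof of Theorem~\ref{thmheatsysblow} for the Kirchhoff parabolic system, now incorporating the extra pseudo-parabolic terms $-\L u_t$ and $-\L\omega_t$. First I would start from the weak formulation of \eqref{pseudoheatsys}, drop the nonnegative initial-data terms $\int_{\H}u_0\varphi_R(x,0)\,dx$ and $\int_{\H}u_0\L\varphi_R(x,0)\,dx$ on the left (using $u_0,\omega_0\ge0$ and $\varphi_R\ge0$ near $t=0$), bound $|M_1|,|M_2|$ by the constants $C_1,C_2$ from \eqref{pseudohsysus1}--\eqref{pseudohsysus2}, and move everything to an estimate of the form
\begin{equation*}
\int_{\Q}|\omega|^q\varphi_R\,dxdt\le \int_{\Omega_1}|u|\bigl(|(\varphi_R)_t|+C_1|\L\varphi_R|+|\L(\varphi_R)_t|\bigr)\,dxdt,
\end{equation*}
together with the symmetric inequality for $\int_{\Q}|u|^p\psi_R\,dxdt$ in terms of $|\omega|$ and the derivatives of $\psi_R$.

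Next I would apply H\"older's inequality with the pair $(q,q')$ to the first inequality, inserting $\psi_R^{1/q}\psi_R^{-1/q}$ as in \eqref{ocenkasys1}, to obtain
\begin{equation*}
\int_{\Q}|\omega|^q\varphi_R\,dxdt\le\Bigl(\int_{\Q}|u|^q\psi_R\,dxdt\Bigr)^{1/q}\widetilde I_1,
\end{equation*}
where $\widetilde I_1$ now has \emph{three} summands: the two from $I_1$ in \eqref{i123} (which no longer carry the $t^{\beta_1 q'}$ weight since here $M$ is merely bounded, i.e. $\beta_1=0$) plus a new term $\bigl(\int_{\Omega_1}|\L(\varphi_R)_t|^{q'}\psi_R^{-q'/q}\,dxdt\bigr)^{1/q'}$. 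Symmetrically one gets $\int_{\Q}|u|^p\psi_R\,dxdt\le(\int_{\Q}|\omega|^p\varphi_R\,dxdt)^{1/p}\widetilde I_2$. Combining the two as in the proof of Theorem~\ref{thmheatsysblow} yields
\begin{equation*}
\Bigl(\int_{\Q}|\omega|^p\varphi_R\,dxdt\Bigr)^{1-\frac1{pq}}\le \widetilde I_1\widetilde I_2^{1/q},\qquad \Bigl(\int_{\Q}|u|^q\psi_R\,dxdt\Bigr)^{1-\frac1{pq}}\le \widetilde I_1^{1/p}\widetilde I_2.
\end{equation*}

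Then I would estimate the scaling exponents. As in Theorem~\ref{pseudoheatthm} one has $|(\varphi_R)_t|,|\L\varphi_R|\le CR^{-2}$ and $|\L(\varphi_R)_t|\le CR^{-4}$, and after the anisotropic change of variables $R^2 t_1=t$, $R\overline\xi=\xi$, $R\widehat\xi=\widetilde\xi$, $R^2\widetilde\tau=\tau$ (the Jacobian contributing $R^{Q+2}$), each of the three integrals defining $\widetilde I_1$ is $\le CR^{(Q+2)/q'-2}$ (the $\L(\varphi_R)_t$ term, carrying $R^{-4}$, is dominated by the $R^{-2}$ terms for large $R$, so it does not change the leading power), giving $\widetilde I_1\le CR^{-2+(Q+2)/q'}$ and likewise $\widetilde I_2\le CR^{-2+(Q+2)/p'}$. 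Hence the right-hand sides behave like $R^{\theta_1+\theta_2/q}$ and $R^{\theta_2+\theta_1/p}$ with $\theta_1=-2+(Q+2)/q'$, $\theta_2=-2+(Q+2)/p'$; requiring one of these exponents to be $<0$ is exactly $Q<A_1$ or $Q<A_2$ with $\beta_1=\beta_2=0$ in the formulas of Theorem~\ref{thmheatsysblow}, so letting $R\to\infty$ forces $\int_{\Q}|\omega|^p=0$ or $\int_{\Q}|u|^q=0$, and then the other equation forces the remaining function to vanish too. For the borderline case $Q=\max\{A_1,A_2\}$, i.e. $\theta_1+\theta_2/q=0$ (or the other), I would use the standard refinement: restrict the test-function derivative integrals to the annular region $\overline\Omega_R:=\{R^4\le|\xi|^4+|\widetilde\xi|^4+\tau^2+t^2\le2R^4\}$ where $(\varphi_R)_t,\L\varphi_R,\L(\varphi_R)_t$ are supported, apply H\"older on $\overline\Omega_R$, and use $\lim_{R\to\infty}\int_{\overline\Omega_R}|\omega|^p\varphi_R\,dxdt=0$ (dominated convergence, since $\int_{\Q}|\omega|^p<\infty$) to close the argument. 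The main obstacle is purely bookkeeping: keeping track of the three-term $\widetilde I_1,\widetilde I_2$ and checking that the new $\L(\varphi_R)_t$ contribution is genuinely lower order, so that the critical exponents are unchanged from the $\beta=0$ specialization of Theorem~\ref{thmheatsysblow}; there is no essential new analytic difficulty.
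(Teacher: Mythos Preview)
Your proposal is correct and follows exactly the route the paper indicates: its own proof is the single sentence ``Repeating the proofs similar to those of Theorems~\ref{thmheatsysblow} and~\ref{pseudoheatthm}, we conclude this theorem,'' and you have carried this out, including the key observation that the extra $\L(\varphi_R)_t$ term scales like $R^{-4}$ and is therefore lower order than the $R^{-2}$ terms, leaving the critical exponents unchanged from the $\beta_1=\beta_2=0$ case of Theorem~\ref{thmheatsysblow}. One small correction: the boundary contribution $\int_{\H}u_0\,\L\varphi_R(\cdot,0)\,dx$ is not a nonnegative term that can simply be ``dropped on the left'' (the sign of $\L\varphi_R$ is not controlled); instead bound it in absolute value by $CR^{-2}\int_{\Omega_1}|u_0|\,dx\to 0$ exactly as in the proof of Theorem~\ref{pseudoheatthm}, and watch the $p\leftrightarrow q$ labeling when pairing the H\"older exponents with the two equations of the system.
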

\begin{proof}
Repeating the proofs similar to those of Theorems \ref{thmheatsysblow} and \ref{pseudoheatthm}, we conclude this theorem. \end{proof}

\section{Kirchhoff type hyperbolic equations and systems}

In this section we show a nonexistence result for the Kirchhoff type hyperbolic equations and systems on the Heisenberg group.

\subsection{Hyperbolic type equations}
We consider the equation
\begin{equation}\label{wave}
\begin{cases}
u_{tt} - M\left(t,\int_{\H}|\na u|^{2}dx,\int_{\H}|\L u|^{2}dx\right) \L u= |u|^{p}, \,\,(x, t) \in\Q,\\
u(x, 0) = u_0(x), \,\,\,x\in\H,\\
u_t(x, 0) = u_1(x), \,\,\,x\in\H,
\end{cases}
\end{equation}
where $\Q:= \H \times (0, T),$ $T>0$, $p > 1$ and $M:\mathbb{R}_{+}\times\mathbb{R}_{+}\times\mathbb{R}_{+}\rightarrow\mathbb{R}$ is satisfying the following condition
\begin{equation}\label{M}
0<M(t,\cdot,\cdot)\leq C_{0}t^{\beta}, \,\,\forall t>0,\,\,\beta>-2,\,\,\,\text{and}\,\,Q>\frac{2}{2+\beta}.
\end{equation}

Let us formulate a definition of the weak solution of the Cauchy problem \eqref{wave}.
\begin{defn}\label{defnweak}
We say that $u$ is a weak solution to \eqref{wave} on $\Omega$ with the initial data
$u_{1}, u_{0} \in L^1_{loc}(\H), $ if $ u\in C^{2}(0,T;S^{2}_{2}(\Omega))\cap L^p(\Q) $ and it satisfies
\begin{multline}
\int_{\Q}|u|^p\varphi dx dt + \int_{\H} (u_1(x)\varphi(x,0)-  u_{0}(x)\varphi_{t}(x,0))dx \\
=\int_{\Q} u \varphi_{tt} dx dt  - \int_{\Q} M\left(t,\int_{\H}|\na u|^{2}dx,\int_{\H}|\L u|^{2}dx\right) u\L \varphi dx dt,
\end{multline}
for any test function $0\leq\varphi\in C^{2,2}_{x,t}(\Q)$.
\end{defn}
\begin{thm}\label{thm1}
Let $u_1, u_{0} \in L^1(\H)$, and assume that $\int_{\H} u_1  dx\geq0.$ Moreover, assume that
the conditions in \eqref{M} hold. If
\begin{equation}
1<p\leq p_{c}= 1+\frac{4}{(2+\beta)Q-2},
\end{equation}
with $Q=2n+2$ the homogeneous dimension  of $\H$, then there exists no nontrivial  weak solution to the equation \eqref{wave}.
\end{thm}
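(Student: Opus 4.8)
The plan is to follow the test function method used for the elliptic and parabolic cases, now accounting for two time derivatives and the $t^{\beta}$ growth of $M$. I would begin from Definition \ref{defnweak}, fix the test function
$$
\varphi_R(x,t)=\Phi\left(\frac{|\xi|^{4}+|\tilde\xi|^{4}+\tau^{2}+t^{2}}{R^{4}}\right),
$$
with $\Phi$ of the usual cut-off type, supported so that $\mathrm{supp}(\varphi_R)$ and $\mathrm{supp}(\L\varphi_R)$ lie in $\Omega_1=\{|\xi|^{4}+|\tilde\xi|^{4}+\tau^{2}+t^{2}\le 2R^{4}\}$. Using the hypothesis $\int_{\H}u_1\,dx\ge 0$ (and that $\varphi_R(x,0)=1$ on a large set, so the $u_1$ term has a good sign, while the $u_0\varphi_t(x,0)$ term vanishes because $\varphi_t(\cdot,0)=0$ by evenness in $t$), I would drop the initial-data contributions and reduce the weak formulation to
$$
\int_{\Q}|u|^{p}\varphi_R\,dx\,dt \le \int_{\Q}|u|\,|\varphi_{R,tt}|\,dx\,dt + C_0\int_{\Q}t^{\beta}|u|\,|\L\varphi_R|\,dx\,dt.
$$

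Next I would apply $\varepsilon$-Young's inequality to each term on the right, splitting $|u| = |u|\varphi_R^{1/p}\varphi_R^{-1/p}$, absorbing the $\varepsilon\int|u|^{p}\varphi_R$ part into the left side, to obtain
$$
\int_{\Q}|u|^{p}\varphi_R\,dx\,dt \le C\int_{\Omega_1}\varphi_R^{-\frac{1}{p-1}}\left(|\varphi_{R,tt}|^{\frac{p}{p-1}} + t^{\frac{\beta p}{p-1}}|\L\varphi_R|^{\frac{p}{p-1}}\right)dx\,dt.
$$
The pointwise bounds $|\L\varphi_R|\le CR^{-2}$ (from \eqref{poxxell}, as in the earlier proofs) and $|\varphi_{R,tt}|\le CR^{-4}$ (two $t$-derivatives, each costing a factor $R^{-2}$) feed into this. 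Then I would rescale via $R\bar\xi=\xi$, $R\widehat\xi=\tilde\xi$, $R^{2}\tilde\tau=\tau$, $R^{2}\bar t=t$, which contributes a Jacobian factor $R^{Q+2}$ (since $Q=2n+2$ is the homogeneous dimension and the $t$-variable scales like $\tau$). This yields
$$
\int_{\Q}|u|^{p}\varphi_R\,dx\,dt \le C\left(R^{Q+2-\frac{4p}{p-1}} + R^{Q+2+\frac{2\beta p}{p-1}-\frac{4p}{p-1}}\right)\le CR^{Q+2+\frac{2\beta p}{p-1}-\frac{4p}{p-1}}
$$
for $\beta\ge 0$; for $-2<\beta<0$ the first term dominates, and the condition $Q>\frac{2}{2+\beta}$ in \eqref{M} is exactly what keeps $\Omega_1$ of positive measure / the integral $\int_\Theta t_1^{\beta p'}(\cdots)$ finite after rescaling, so I would note that carefully. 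The exponent $Q+2+\frac{2\beta p}{p-1}-\frac{4p}{p-1}<0$ rearranges to $p<1+\frac{4}{(2+\beta)Q-2}$, giving the subcritical case: letting $R\to\infty$ forces $\int_{\Q}|u|^{p}=0$, hence $u\equiv 0$, a contradiction.

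For the critical case $p=1+\frac{4}{(2+\beta)Q-2}$, the exponent of $R$ is zero, so I would instead use H\"older's inequality rather than Young's, exactly as in the second half of the proof of Theorem \ref{ellthm}: first deduce the uniform bound $\int_{\Q}|u|^{p}\varphi_R\,dx\,dt\le C$, then the dominated convergence theorem on the annular region $\overline\Omega_R=\{R^{4}\le|\xi|^{4}+|\tilde\xi|^{4}+\tau^{2}+t^{2}\le 2R^{4}\}$ gives $\int_{\overline\Omega_R}|u|^{p}\varphi_R\to 0$; since $\L\varphi_R$ and $\varphi_{R,tt}$ are supported in $\overline\Omega_R$, re-running the H\"older estimate over $\overline\Omega_R$ produces $\int_{\Q}|u|^{p}\varphi_R\le C\left(\int_{\overline\Omega_R}|u|^{p}\varphi_R\right)^{1/p}\to 0$, again forcing $u\equiv 0$. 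The main obstacle I anticipate is bookkeeping the weight $t^{\beta}$ with possibly negative $\beta$: one must check that $\int_\Theta t_1^{\beta p'}|\Phi\circ\mu|^{-p/(p-1)}|\L(\Phi\circ\mu)|^{p/(p-1)}\,dy\,dt_1<\infty$ near $t_1=0$, which is where the assumption $Q>\frac{2}{2+\beta}$ (equivalently $\beta p'>-1$ after combining with the homogeneous-dimension scaling) is genuinely needed; the rest is a routine adaptation of the elliptic and parabolic arguments.
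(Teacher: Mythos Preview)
Your overall strategy (test function method, Young/H\"older, absorb and rescale) is right, but the particular test function you chose cannot deliver the stated exponent $p_c=1+\frac{4}{(2+\beta)Q-2}$. There are two intertwined problems.

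First, an arithmetic slip: with your $\varphi_R$ and $|\L\varphi_R|\le CR^{-2}$, the term $\int_{\Omega_1}t^{\beta p'}\varphi_R^{-1/(p-1)}|\L\varphi_R|^{p'}\,dx\,dt$ scales like $R^{Q+2+2\beta p'-2p'}$, not $R^{Q+2+2\beta p'-4p'}$. Correcting this, your two exponents are $Q+2-4p'$ (from $\varphi_{R,tt}$) and $Q+2+(2\beta-2)p'$ (from $t^\beta\L\varphi_R$), and neither of these rearranges to $p<p_c$. For instance at $\beta=0$ the dominant one yields only $p<1+\tfrac{2}{Q}$, the parabolic Fujita exponent, strictly smaller than the hyperbolic $p_c=\tfrac{Q+1}{Q-1}$.

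The deeper issue is that the combined cutoff $\Phi\bigl((|\xi|^4+|\tilde\xi|^4+\tau^2+t^2)/R^4\bigr)$ forces the parabolic time scaling $t\sim R^2$, which is not the correct one for the wave operator. The paper instead takes a \emph{product} test function
\[
\varphi_R(x,t)=\Phi\!\left(\frac{|\xi|^4+|\tilde\xi|^4+\tau^2}{R^4}\right)\Phi\!\left(\frac{t^2}{R^{\theta}}\right),
\]
with a free parameter $\theta>0$. Then $|\varphi_{R,tt}|\le CR^{-\theta}$ and $|\L\varphi_R|\le CR^{-2}$, and after rescaling (space Jacobian $R^Q$, time Jacobian $R^{\theta/2}$) the two terms have exponents $Q+\tfrac{\theta}{2}-\theta p'$ and $Q+\tfrac{\theta}{2}+\tfrac{\beta\theta}{2}p'-2p'$. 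Choosing $\theta=\frac{4}{2+\beta}$ equalizes them to $Q+\tfrac{\theta}{2}-\theta p'$, and $Q+\tfrac{\theta}{2}-\theta p'<0$ is exactly $p<1+\frac{\theta}{Q-\theta/2}=p_c$. This optimization over $\theta$ is the missing idea; without it you only recover a suboptimal range. The critical case $p=p_c$ is then handled as you described, via H\"older on the annular region, but with the product test function and the correct $\theta$.
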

\begin{proof}\label{wave4.2}
Let us consider the case $1<p<p_{c}$. By taking into account the property \eqref{M} and by Definition \ref{defnweak}, for any test function $\varphi$, we have
\begin{equation}\label{nacho}
\begin{split}
\int_\Omega |u|^p\varphi dx dt &+ \int_{\H} (u_1(x)\varphi(x,0)-  u_{0}(x)\varphi_{t}(x,0))dx  \\&
=\int_\Omega u \varphi_{tt} dx dt   - \int_\Omega M\left(t,\int_{\H}|\na u|^{2}dx,\int_{\H}|\L u|^{2} dx\right)u \L \varphi dx dt\\&
\leq\left|\int_\Omega u \varphi_{tt} dx dt   - \int_\Omega M\left(t, \int_{\H}|\na u|^{2}dx,\int_{\H}|\L u|^{2}dx\right)u \L \varphi dx dt\right|
\\&\leq \int_\Omega |u \varphi_{tt}| dx dt  + \int_\Omega\left|M\left(t, \int_{\H}|\na u|^{2} dx,\int_{\H}|\L u|^{2} dx\right)\right||u| |\L \varphi |dx dt
\\& \leq \int_\Omega |u \varphi_{tt}| dx dt  + C_{0}\int_\Omega t^{\beta}|u| |\L \varphi | dx dt.
\end{split}
\end{equation}
Let us choose a test function in the following form
\begin{equation}
\label{TestFunctionR}
\varphi_R (x, t) = \Phi \left(\frac{|\xi|^{4}+|\tilde{\xi}|^{4}+\tau^{2}}{R^{4}}\right)\Phi \left(\frac{t^{2}}{R^{\theta}}\right), \,\,(x,t)\in\H\times\mathbb{R}_{+},\,\,\theta>0,\,\,R>0,
\end{equation}
 with $\Phi\in C^{\infty}_{0}(\mathbb{R}_{+})$ as
$$
\Phi(r) =
\begin{cases}
1, & \text{if $ 0\leq r \leq 1 $,} \\
\searrow, & \text{if $1< r\leq 2$,}\\
0, & \text{if $r>  2$.}
\end{cases}
$$

By noting that
\begin{equation}\label{pott}
\frac{\partial\varphi_{R}(x,t)}{\partial t}=\frac{2 t}{R^{\theta}}\Phi \left(\frac{|\xi|^{4}+|\tilde{\xi}|^{4}+\tau^{2}}{R^{4}}\right)\Phi_{t}'\left(\frac{t^{2}}{R^{\theta}}\right),
\end{equation}
one has
\begin{equation}\label{vaprhi0}
\frac{\partial\varphi_{R}(x,0)}{\partial t}=0.
\end{equation}
We note that $\text{supp}(\varphi_{R})$ and $\text{supp}(\L\varphi_{R})$ are subsets of $\Omega_{1}=\Gamma\cup\tilde{\Gamma}$, where
\begin{equation}\label{wavedomain}
\Gamma:=\{x=(\xi,\tilde{\xi},\tau)\in\H: |\xi|^{4}+|\tilde{\xi}|^{4}+\tau^{2}\leq2R^{4}\},\,\,\tilde{\Gamma}:=\{t:0\leq t^{2}\leq 2R^{\theta}\}.
\end{equation}
We seek the test function $\varphi_{R}$ with the following properties
\begin{equation*}
 A_{p}(\varphi_{R}):=\int_{\Omega_{1}}\varphi_{R}^{-\frac{1}{p-1}} |(\varphi_{R})_{tt}|^{\frac{p}{p-1}} dx dt<\infty,
\end{equation*}
and
\begin{equation*}
 B_{p}(\varphi_{R}):=\int_{\Omega_{1}} t^{\frac{\beta p}{p-1}}\varphi_{R}^{-\frac{1}{p-1}} |\L \varphi_{R}|^{\frac{p}{p-1}} dx dt<
\infty. 
\end{equation*}
Then by  using Young's inequality, we get\\
\begin{equation*}
\begin{split}
    \int_{\Omega_{1}} |u| |(\varphi_{R})_{tt}| dx dt &= \int_{\Omega_{1}} u\varphi_{R}^{\frac{1}{p}}\varphi_{R}^{-\frac{1}{p}}|(\varphi_{R})_{tt}|dx dt\\&
\leq \varepsilon \int_{\Omega_{1}} |u|^p \varphi_{R} dx dt +C(\varepsilon) \int_{\Omega_{1}}\varphi_{R}^{-\frac{1}{p-1}} |(\varphi_{R})_{tt}|^\frac{p}{p-1} dx dt,
\end{split}
\end{equation*}
and
\begin{equation*}
\begin{gathered}
\int_{\Omega_{1}}t^{\beta}|u| |\L \varphi_{R}| dt dx \leq \varepsilon \int_{\Omega_{1}}|u|^p \varphi_{R} dt dx+ C(\varepsilon) \int_{\Omega_{1}}t^{\frac{\beta p}{p-1}}\varphi_{R}^{-\frac{1}{p-1}} |\L \varphi_{R}|^{\frac{p}{p-1}} dx dt,
\end{gathered}
\end{equation*}
for some positive constant $C(\varepsilon)$.
Hence by using the last fact and \eqref{vaprhi0}, we obtain
\begin{equation}\label{obs}
\int_\Omega |u|^p \varphi_{R} dx dt + C_{1}\int_{\H} u_1(x)\varphi_{R}(x,0) dx
\leq C \left(A_p(\varphi_{R}) + B_p(\varphi_{R})\right),
\end{equation}
where $C,C_{1}>0$.

Then from \cite{PohVer}, one obtains
\begin{equation}
\begin{split}\label{poxx}
&\L\varphi_{R}(x,t)=\frac{4(n+4)}{R^{4}}\left(|\xi|^{2}+|\tilde{\xi}|^{2}\right)\Phi'\left(\frac{|\xi|^{4}+|\tilde{\xi}|^{4}+\tau^{2}}{R^{4}}\right)\Phi\left(\frac{t^{2}}{R^{2}}\right)
\\&+\frac{16}{R^{8}}\left((|\xi|^{6}+|\tilde{\xi}|^{6})+2\tau(|\xi|^{2}-|\tilde{\xi}|^{2})\xi\cdot\tilde{\xi}+\tau^{2}(|\xi|^{2}+|\tilde{\xi}|^{2})\right)
\\&\times\Phi''\left(\frac{|\xi|^{4}+|\tilde{\xi}|^{4}+\tau^{2}}{R^{4}}\right)\Phi\left(\frac{t^{2}}{R^{2}}\right).
\end{split}
\end{equation}
By choosing variables $R^{\frac{\theta}{2}}\overline{t}=t$, $R\overline{\xi}=\xi$, $R\widehat{\xi}=\tilde{\xi}$ and $R^{2}\tilde{\tau}=\tau$ in the domain $\Omega_{1}$, $\theta=\frac{2}{1+\frac{\beta}{2}}$ and \eqref{vaprhi0}, one calculates
\begin{equation}\label{ocenk4}
\begin{split}
\int_{\Omega} |u|^p \varphi_{R} dx dt &+ C_{1}\int_{\H} u_1(x) \varphi_R(x, 0) dx\\&
\stackrel{\eqref{obs}}\leq C\left(R^{-\theta p'+Q+\frac{\theta}{2}}+R^{-2p'+(\frac{\beta \theta}{2})p'+Q+\frac{\theta}{2}}\right)\\&
\stackrel{\theta=\frac{2}{1+\frac{\beta}{2}}}=2 CR^{-\theta p'+Q+\frac{\theta}{2}}.
\end{split}
\end{equation}
On the other hand, we have
\begin{multline*}
\int_\Omega |u|^p \varphi_R dx dt + C_{1}\int_{\H} u_1(x) \varphi_R(x, 0) dx\\
\geq \liminf_{R \to \infty} \int_\Omega |u|^p \varphi_R dx dt + C_{1}\liminf_{R \to \infty} \int_{\H} u_1(x) \varphi_R(x, 0) dx.
\end{multline*}
Using the monotone convergence theorem, we obtain
\[
\liminf_{R \to \infty} \int_\Omega |u|^p \varphi_R dx dt = \int_\Omega |u|^p dxdt.
\]
Since $u_1 \in  L^1(\H)$, by the dominated convergence theorem, one has
$$
\liminf_{R \to \infty} \int_{\H} u_1(x) \varphi_R(x, 0) dx=\int_{\H} u_1(x) dx.\\
$$
Now, we get
\[
\liminf_{R \to \infty} \left(\int_\Omega |u|^p \varphi_R dx dt + C_{1}\int_{\H} u_1(x) \varphi_R(x,0) dx\right) \geq \int_\Omega |u|^p  dx dt +C_{1}d,
\]
where
\[ 
d= \int_{\H} u_1(x) dx \geq 0. 
\]
Then, 
\begin{multline*}
\int_\Omega |u|^p \varphi_R dx dt  +C_{1}\int_{\H} u_1(x) \varphi_R(x,0) dx \\ \geq
\liminf_{R \to \infty} \left(\int_\Omega |u|^p \varphi_R dx dt  +C_{1}\int_{\H} u_1(x) \varphi_R(x,0) dx\right)\geq  \int_\Omega |u|^p  dx dt +C_{1}d.
\end{multline*}
From \eqref{ocenk4} and $u_{1}\in L^{1}(\H)$, it can be shown that
\begin{equation}\label{vkon2}
\int_\Omega |u|^p dx dt+C_{1}d \leq R^{-\theta p'+Q+\frac{\theta}{2}}.
\end{equation}
By the assumption $p<1+\frac{\theta}{Q-\frac{\theta}{2}}=p_{c}$,
and where we take $\theta=\frac{2}{1+\frac{\beta}{2}}>0$.
Finally, letting $R\rightarrow\infty,$ one has
$$
\int_\Omega |u|^p dx dt\leq\int_\Omega |u|^p dx dt+C_{1}d\leq0.
$$
That is a contradiction, concluding that
$$
\int_\Omega |u|^p dx dt\leq0.
$$

Let us consider the critical case $p =  1+\frac{\theta}{Q-\frac{\theta}{2}}$. By using inequality \eqref{vkon2}, we obtain
\begin{equation}
\int_\Omega |u|^p dx dt \leq C< \infty,
\end{equation}
and
\begin{equation}
\lim_{R \to \infty} \int_{\overline{\Omega}_R} |u|^p \varphi_R dx dt=0,
\end{equation}
where $\overline{\Omega}_R=\Gamma_R\cup\tilde{\Gamma}_R,$
\begin{equation*}
\Gamma_R:=\{x=(\xi,\tilde{\xi},\tau)\in\H: R^{4}\leq|\xi|^{4}+|\tilde{\xi}|^{4}+\tau^{2}\leq2R^{4}\},\,\,\tilde{\Gamma}_R:=\{t:R^{\theta}\leq t^{2}\leq 2R^{\theta}\}.
\end{equation*}
Using the H\"{o}lder inequality instead of Young's inequality in \eqref{nacho}, one has
\[
\int_\Omega |u|^p \varphi_R dx dt\leq \int_\Omega |u|^p \varphi_R dx dt + d \leq C\left(\int_{\overline{\Omega}_R}  |u|^p \varphi_R dx dt\right)^{\frac{1}{p}}.
\]
Letting $R \to \infty$, we get
\[
\int_\Omega |u|^p \varphi_R dx dt  =0,
\]
completing the proof.
\end{proof}

\subsection{Hyperbolic Kirchhoff type systems}

Let us consider the Kirchhoff type system of wave equations in $\Q=\H \times (0, T),$ $T>0$ with the Cauchy data
\begin{equation}
\label{wavesys}
\begin{cases}
u_{tt} - M_{1}\left(t,\int\limits_{\H}|\na u|^{2}dx,\int\limits_{\H}|\na v|^{2}dx,\int\limits_{\H}|\L u|^{2}dx, \int\limits_{\H}|\L v|^{2}dx\right)\L u = |v|^{q},\\
v_{tt} - M_{2}\left(t,\int\limits_{\H}|\na u|^{2}dx,\int\limits_{\H}|\na v|^{2}dx,\int\limits_{\H}|\L u|^{2}dx, \int\limits_{\H}|\L v|^{2}dx\right)\L v = |u|^{p},\\
u(x, 0) = u_0(x), u_t(x, 0) = u_1(x), \,\,\,x\in\H,\\
v(x,0)=v_{0}(x), v_{t}(x,0)=v_{1}(x), \,\,\,x\in\H,
\end{cases}
\end{equation}
where $p,q>1$ and $M_{1},M_{2}:\mathbb{R}_{+}\times\mathbb{R}_{+}\times\mathbb{R}_{+}\rightarrow\mathbb{R}$ are satisfying the following conditions
\begin{equation}\label{sysus1}
0<M_{i}(t,\cdot,\cdot,\cdot,\cdot)\leq C_{i}t^{\beta_{i}},\,\,\forall t>0,\,\,\beta_{i}>-2,\,\,\,\text{and}\,\,Q>\frac{4}{2+\beta_{i}},
\end{equation}
for $i=1, 2$.

Now, we formulate a definition of the weak solution to \eqref{wavesys}.
\begin{defn}
We say that the pair $(u, v)$ is a weak solution to the equation \eqref{wavesys} on $\Q$ with the initial data $(u(x, 0), v(x,0)) = (u_0, v_0) \in  L^1_{loc}(\H) \times L^1_{loc}(\H)$, if $(u, v) \in C^{2}(0,T;S^{2}_{2}(\H))\cap L^p(\Q) \times C^{2}(0,T;S^{2}_{2}(\H))\cap L^q(\Q) $ satisfies\\
\begin{align*}
&\int_{\Q}|v|^q \varphi dx dt +\int_{\H} u_1(x)\varphi(x,0) dx-\int_{\H} u_0(x) \varphi_{t}(x,0) dx
=\int_{\Q} u \varphi_{tt} dx dt \\&-\int_{\Q} M_{1}\left(t,\int\limits_{\H}|\na u|^{2}dx,\int\limits_{\H}|\na v|^{2}dx,\int\limits_{\H}|\L u|^{2}dx, \int\limits_{\H}|\L v|^{2}dx\right)u \L\varphi dx dt,
\end{align*}
and
\begin{align*}
&\int_{\Q}|u|^p \psi dx dt +\int_{\H} v_1(x)\psi(x,0) dx-\int_{\H} v_0(x)\psi_{t}(x,0)dx
=\int_{\Q} v \psi_{tt} dx dt\\& - \int_{\Q} M_{2}\left(t,\int\limits_{\H}|\na u|^{2}dx,\int\limits_{\H}|\na v|^{2}dx,\int\limits_{\H}|\L u|^{2}dx, \int\limits_{\H}|\L v|^{2}dx\right)v \L\psi dx dt,
\end{align*}
for any test function $0\leq\varphi,\psi\in C^{2,2}_{x,t}(\Omega)$.
\end{defn}

For simplicity, let us denote
\begin{equation}
\theta_{i}=\frac{4}{2+\beta_{i}},\,\,\,\,\,i=1,2,
\end{equation}
and $\widetilde{\theta}=\max\{\theta_{1},\theta_{2}\}$. We give the following main result for the system of wave equations.
\begin{thm}
\label{thm2}
Suppose that the condition \eqref{sysus1} holds. Assume that the initial data $(u_0, v_0), (u_1, v_1) \in L^1(\H) \times L^1(\H)$ satisfy
\begin{equation}
\int_{\H} u_1 dx\geq0,\,\,\,\text{and}\,\,\,\int_{\H} v_1 dx\geq0.
\end{equation}
If
\begin{equation*}
1 < pq \leq  1 + \frac {\theta_{2}q+\theta_{1}}{Q+\frac{\widetilde{\theta}}{2}-\theta_{1}}, \,\,\, \hbox{or} \,\,\, 1 < pq \leq  1 + \frac {\theta_{1}p+\theta_{2}}{Q+\frac{\widetilde{\theta}}{2}-\theta_{2}},
\end{equation*}
with $Q=2n+2$ the homogeneous dimension of $\H$, then there exists no nontrivial weak solution of the system \eqref{wavesys}.
\end{thm}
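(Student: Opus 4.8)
The plan is to fuse the test-function scheme of Theorem~\ref{thm1} (single hyperbolic Kirchhoff equation) with the coupling mechanism of Theorem~\ref{thmheatsysblow} (parabolic system). First I would fix product-type cutoffs
$$\varphi_R(x,t)=\Phi\!\left(\frac{|\xi|^{4}+|\tilde\xi|^{4}+\tau^{2}}{R^{4}}\right)\Phi\!\left(\frac{t^{2}}{R^{\widetilde\theta}}\right),\qquad \psi_R(x,t)=\Psi\!\left(\frac{|\xi|^{4}+|\tilde\xi|^{4}+\tau^{2}}{R^{4}}\right)\Psi\!\left(\frac{t^{2}}{R^{\widetilde\theta}}\right),$$
with $\Phi,\Psi$ of the standard cutoff shape used throughout, and --- crucially --- with the \emph{same} time-dilation exponent $\widetilde\theta=\max\{\theta_{1},\theta_{2}\}$ in both. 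This common exponent is what makes the weighted integrals $\int|(\varphi_R)_{tt}|^{p'}\psi_R^{-p'/p}$, $\int t^{\beta_1 q'}|\L\varphi_R|^{q'}\psi_R^{-q'/q}$ and their analogues finite, since then the supports in $t$ are nested; and, exactly as in \eqref{pott}--\eqref{vaprhi0}, $\partial_t\varphi_R(x,0)=\partial_t\psi_R(x,0)=0$, so the $u_0$- and $v_0$-boundary terms disappear from the two weak identities.

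Feeding these into the weak formulation of \eqref{wavesys} and using $0<M_i\le C_it^{\beta_i}$ with $|v|^q,|u|^p\ge0$, one gets, exactly as in \eqref{nacho},
$$\int_{\Omega}|v|^{q}\varphi_R+C_1\!\int_{\H}u_1\varphi_R(x,0)\,dx\le\int_{\Omega}|u|\,|(\varphi_R)_{tt}|\,dxdt+C_1\!\int_{\Omega}t^{\beta_1}|u|\,|\L\varphi_R|\,dxdt,$$
and the symmetric inequality under $u\leftrightarrow v$, $p\leftrightarrow q$, $1\leftrightarrow2$. I would then couple the equations by H\"older's inequality --- pairing $|u|$ with $\psi_R^{1/p}$ in the first and $|v|$ with $\varphi_R^{1/q}$ in the second --- to reach
$$\int_{\Omega}|v|^{q}\varphi_R+C_1\!\int_{\H}u_1\varphi_R(x,0)\le\Big(\int_{\Omega}|u|^{p}\psi_R\Big)^{1/p}I_1,\qquad \int_{\Omega}|u|^{p}\psi_R+C_2\!\int_{\H}v_1\psi_R(x,0)\le\Big(\int_{\Omega}|v|^{q}\varphi_R\Big)^{1/q}I_2,$$
with $I_1,I_2$ the $|(\cdot)_{tt}|$- and $t^{\beta_i}|\L(\cdot)|$-test-function integrals over $\Omega_1$, as in Theorem~\ref{thmheatsysblow}. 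Substituting one into the other,
$$\Big(\int_{\Omega}|v|^{q}\varphi_R\Big)^{1-\frac1{pq}}\le C\,I_1I_2^{1/p},\qquad \Big(\int_{\Omega}|u|^{p}\psi_R\Big)^{1-\frac1{pq}}\le C\,I_1^{1/q}I_2,$$
modulo the initial terms.

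The next step is to estimate $I_1,I_2$ through the anisotropic rescaling $\xi=R\bar\xi$, $\tilde\xi=R\widehat\xi$, $\tau=R^{2}\tilde\tau$, $t=R^{\widetilde\theta/2}\bar t$, with $|\L\varphi_R|,|\L\psi_R|\le CR^{-2}$, $|(\varphi_R)_{tt}|,|(\psi_R)_{tt}|\le CR^{-\widetilde\theta}$ and the formula \eqref{poxx}; this gives $I_1\le CR^{\sigma_1}$, $I_2\le CR^{\sigma_2}$ for exponents $\sigma_1,\sigma_2$ explicit in $Q,\widetilde\theta,\theta_1,\theta_2,p',q'$, and a direct (if tedious) simplification identifies the first hypothesis with $\sigma_1+\sigma_2/p\le0$ and the second with $\sigma_1/q+\sigma_2\le0$. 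In the strict case, letting $R\to\infty$ forces $\int_{\Omega}|v|^{q}=0$ (or $\int_{\Omega}|u|^{p}=0$ under the other hypothesis); the sign-changing Cauchy data are disposed of as in Theorem~\ref{thm1}, by passing to $\liminf_{R\to\infty}$, using monotone convergence on $\int|v|^{q}\varphi_R$, $\int|u|^{p}\psi_R$ and, since $u_1,v_1\in L^{1}(\H)$ with $\varphi_R(x,0),\psi_R(x,0)\uparrow1$, dominated convergence to turn the boundary terms into $C_1\int u_1\,dx\ge0$, $C_2\int v_1\,dx\ge0$, which only helps. Once, say, $v\equiv0$ a.e., the second weak identity (with $\partial_t\psi_R(x,0)=0$) reduces to $\int_{\Omega}|u|^{p}\psi_R=-\int_{\H}v_1\psi_R(x,0)\,dx$, so $\int_{\Omega}|u|^{p}=-\int_{\H}v_1\,dx\le0$, whence $u\equiv0$ too, and the leftover boundary terms force $u_0=u_1=v_0=v_1=0$. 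The equality (critical) case is handled by using H\"older's inequality in place of Young's and running the annular-region argument of Theorems~\ref{ellthm} and~\ref{thm1}: $\int|v|^{q}\varphi_R$ bounded gives $|v|^{q}\in L^{1}(\Omega)$, hence $\int_{\overline{\Omega}_R}|v|^{q}\varphi_R\to0$ by dominated convergence, and repeating the H\"older estimates over $\overline{\Omega}_R$ closes it.

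I expect the main obstacle to be twofold. The first is the exponent accounting: since $\widetilde\theta=\max\{\theta_1,\theta_2\}$ breaks the balance $\beta_i\widetilde\theta/2-2=-\theta_i$ available in the single-equation setting, one must determine, separately for $I_1$ and $I_2$, whether the $|(\cdot)_{tt}|$- or the $t^{\beta_i}|\L(\cdot)|$-contribution dominates, and then confirm that the thresholds so obtained are exactly the ones in the statement. The second is the interaction of the sign-changing data $u_1,v_1$ with the coupled structure: unlike the parabolic system --- where $u_0,\omega_0\ge0$ permits simply dropping the initial terms --- here the limit in $R$ must be interleaved with the H\"older substitution, typically via subadditivity of $s\mapsto s^{1/p}$ and an absorption step, so that in the limit only the nonnegative quantities $\int u_1\,dx$, $\int v_1\,dx$ remain.
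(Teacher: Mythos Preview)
Your proposal is correct and follows essentially the same route as the paper: product-type cutoffs with $\partial_t\varphi_R(x,0)=\partial_t\psi_R(x,0)=0$, the H\"older coupling of the two weak identities leading to the iterated bounds $(\int|v|^q\varphi_R)^{1-1/pq}\le CI_1I_2^{1/p}$ (and its symmetric counterpart), anisotropic rescaling to extract the exponents, dominated convergence on the $u_1,v_1$ terms, and the annular argument for the critical case. Your choice of the common time-dilation exponent $\widetilde\theta$ for both test functions is exactly what is needed to make the mixed integrals $\int|\L\varphi_R|^{p'}\psi_R^{-p'/p}$ well defined, and is in fact cleaner than the paper's presentation, which leaves the precise time scaling of the two cutoffs somewhat implicit before switching to $\widetilde\theta$ in the final exponents $\alpha_1,\alpha_2$.
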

\begin{proof} 
 By analogy with the single equation case, one obtains
\begin{equation*}
\begin{split}
     \int_\Omega |v|^q \varphi dx dt &+
 \int_{\H} u_1(x) \varphi(x,0) dx
\leq \int_\Omega|u| |\varphi_{tt}| dx dt
+ C_{1}\int_\Omega t^{\beta_{1}}|u| |\L\varphi| dx dt
\end{split}
\end{equation*}
and
\begin{equation*}
 \int_\Omega |u|^p \psi dx dt +\int_{\H} v_1(x) \psi(x,0) dx
\leq\int_\Omega |v| |\psi_{tt}| dx dt + C_{2}\int_{\Omega}t^{\beta_{2}} |v| |\L\psi| dx dt.
\end{equation*}
By taking $\varphi = \varphi_R$ and $\psi=\psi_{R}$ as in \eqref{TestFunctionR} and using the H\"{o}lder inequality, we calculate
\begin{multline*}
\int_\Omega |v|^q \varphi_R dxdt +
 \int_{\H} u_1(x) \varphi_{R}(x,0) dx \\ 
 \leq C(A_p(\varphi_R)^{\frac{p-1}{p}} + B_p(\varphi_R)^{\frac{p-1}{p}})\left( \int_\Omega|u|^p \psi_R dxdt\right)^{\frac{1}{p}},
\end{multline*}
and 
\begin{multline*}
\int_\Omega |u|^p \psi_R dxdt +
 \int_{\H} v_1(x) \psi_{R}(x,0) dx\\
 \leq C(A_q(\psi_R)^{\frac{q-1}{q}} + B_q(\psi_R)^{\frac{q-1}{q}})\left( \int_\Omega|v|^q \varphi_R dxdt\right)^{\frac{1}{q}},
\end{multline*}
where $C>0$,
\begin{equation*}
A_p(\varphi_{R}) = \int_{\Omega_{1}}\varphi_{R}^{-\frac{1}{p-1}} |(\varphi_{R})_{tt}|^{\frac{p}{p-1}} dx dt<\infty,
\end{equation*}
\begin{equation*}
B_p(\varphi_{R}) = \int_{\Omega_{1}} t^{\frac{\beta_{1} p}{p-1}}\varphi_{R}^{-\frac{1}{p-1}} |\L \varphi_{R}|^{\frac{p}{p-1}} dx dt<\infty, 
\end{equation*}
\begin{equation*}
A_q(\psi_{R}) = \int_{\Omega_{1}}\psi_{R}^{-\frac{1}{q-1}} |(\psi_{R})_{tt}|^{\frac{q}{q-1}} dx dt<\infty,
\end{equation*}
and
\begin{equation*}
B_q(\psi_{R}) = \int_{\Omega_{1}} t^{\frac{\beta_{2} p}{p-1}}\psi_{R}^{-\frac{1}{q-1}} |\L \psi_{R}|^{\frac{q}{q-1}} dx dt<\infty. 
\end{equation*}

By using the dominated convergence theorem with $u_{1}, v_{1}\in L^{1}_{loc}(\H)$, and conditions $\int_{\H}u_{1}dx\geq0$ and $\int_{\H}v_{1}dx\geq0$, we obtain
\begin{equation*}
\begin{split}
 \int_{\H} u_1(x) \varphi_R(x,0) dx \geq \liminf_{R\rightarrow\infty}\int_{\H} u_1(x) \varphi_R(x,0) dx=\int_{\H}u_{1}dx\geq0 ,\\
  \int_{\H} v_1(x) \psi_R(x,0) dx \geq \liminf_{R\rightarrow\infty}\int_{\H} v_1(x) \psi_R(x,0) dx=\int_{\H}v_{1}dx\geq0.\\
\end{split}
\end{equation*}
Thus, we get
\begin{equation*}
\int_\Omega |v|^q \varphi_R dxdt \\
\leq C(A_p(\varphi_R)^{\frac{p-1}{p}} + B_p(\varphi_R)^{\frac{p-1}{p}})\left( \int_\Omega |u|^p \psi_R dxdt\right)^{\frac{1}{p}},
\end{equation*}
and
\begin{equation*}
\int_\Omega |u|^p \psi_R dxdt
\leq C(A_q(\psi_R)^{\frac{q-1}{q}} + B_q(\psi_R)^{\frac{q-1}{q}})\left( \int_\Omega|v|^q \varphi_R dxdt\right)^{\frac{1}{q}}.
\end{equation*}

By choosing variables $R^{\frac{\theta_{i}}{2}}\overline{t}=t$, $R\overline{\xi}=\xi$, $R\widehat{\xi}=\tilde{\xi}$ and $R^{2}\tilde{\tau}=\tau$ in the domain $\Omega_{1}$ (in \eqref{wavedomain}), for $i=1,2$, we get
\begin{equation*}
\begin{split}
\int_\Omega |v|^q \varphi_R dxdt
\leq CR^{\frac{\left(Q+\frac{\theta_{1}}{2}\right)(p-1)}{p}-\theta_{1}}\left( \int_\Omega|u|^p \psi_R dxdt\right)^{\frac{1}{p}},
 \end{split}
\end{equation*}
and
\begin{equation*}
\begin{split}
\int_\Omega |u|^p \psi_R dxdt
\leq CR^{\frac{\left(Q+\frac{\theta_{2}}{2}\right)(q-1)}{q}-\theta_{2}}\left( \int_\Omega|v|^q \varphi_R dxdt\right)^{\frac{1}{q}}.
 \end{split}
\end{equation*}
Combining the last two facts, one concludes
\begin{equation}\label{syspse1}
\begin{split}
\left(\int_\Omega |u|^p \psi_R dxdt\right)^{1-\frac{1}{pq}}  \leq CR^{\alpha_1},
 \end{split}
\end{equation}
and
\begin{equation}\label{syspse2}
\begin{split}
\left( \int_\Omega |v|^q \varphi_R dxdt\right)^{1-\frac{1}{pq}}  \leq  CR^{\alpha_2},
 \end{split}
\end{equation}
where
\[
\alpha_1 = \frac{\left(Q+\frac{\widetilde{\theta}}{2}\right)(pq-1)-\theta_{1}pq-\theta_{2}q}{pq} \,\,\,  \hbox{and} \,\,\, \alpha_{2}=\frac{\left(Q+\frac{\widetilde{\theta}}{2}\right)(pq-1)-\theta_{2}pq-\theta_{1}p}{pq},
\]
with $\widetilde{\theta}=\max\{\theta_{1},\theta_{2}\}$.


\textbf{Case 1:} $1 < pq < 1 + \frac {\theta_{2}q+\theta_{1}}{Q+\frac{\widetilde{\theta}}{2}-\theta_{1}},$ $\left(\text{or}\,\,1 < pq <1+\frac {\theta_{1}p+\theta_{2}}{Q+\frac{\widetilde{\theta}}{2}-\theta_{2}}\right)$. By letting $R \to \infty$ in \eqref{syspse1} with $1 < pq < 1 + \frac {\theta_{2}q+\theta_{1}}{Q+\frac{\widetilde{\theta}}{2}-\theta_{1}}$, we obtain
\[
\int_\Omega |u|^p  dxdt =0,
\]
which is a contradiction. Similarly,  from \eqref{syspse2} and $1 < pq <\frac {\theta_{1}p+\theta_{2}}{Q+\frac{\widetilde{\theta}}{2}-\theta_{2}}$, we obtain
\[
\int_\Omega |v|^q  dxdt =0.
\]

\textbf{Case 2:} $pq =  1 +  \max\left\{\frac {\theta_{2}q+\theta_{1}}{Q+\frac{\widetilde{\theta}}{2}-\theta_{1}},\frac {\theta_{1}p+\theta_{2}}{Q+\frac{\widetilde{\theta}}{2}-\theta_{2}}\right\}$. This case can be treated in the same way
as in the proof of Theorem \ref{thm1}.
\end{proof}

\begin{cor}
 If $p = q>1$,  $u = v$  and $M_{1}=M_{2}$ in Theorem \ref{thm2}, we obtain the result for a single
equation given by Theorem \ref{thm1}.
\end{cor}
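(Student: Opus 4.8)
The plan is to verify the Corollary by a direct specialisation of the hypotheses of Theorem~\ref{thm2}, with essentially no analysis needed beyond elementary algebra. First I would observe that taking $M_{1}=M_{2}$ allows the exponents in \eqref{sysus1} to be chosen equal, say $\beta_{1}=\beta_{2}=\beta$, so that $\theta_{1}=\theta_{2}=\widetilde{\theta}=\theta:=\frac{4}{2+\beta}$; and that with $u=v$ (hence $u_{0}=v_{0}$ and $u_{1}=v_{1}$) the system \eqref{wavesys} collapses to the single equation \eqref{wave}, while the two a~priori conditions on $pq$ in Theorem~\ref{thm2} become one and the same condition on $p^{2}$.

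Next I would carry out the elementary reduction of that condition. Setting $p=q$ in, say, the first inequality of Theorem~\ref{thm2} gives $1<p^{2}\leq 1+\frac{\theta(p+1)}{Q-\theta/2}$, the left-hand half being automatic since $p>1$. Subtracting $1$, factoring $p^{2}-1=(p-1)(p+1)$, and dividing by $p+1>0$ (which preserves the direction of the inequality) reduces this to $p-1\leq\frac{\theta}{Q-\theta/2}$. Substituting $\theta=\frac{4}{2+\beta}$ and simplifying $Q-\theta/2=\frac{(2+\beta)Q-2}{2+\beta}$ then yields $p\leq 1+\frac{4}{(2+\beta)Q-2}=p_{c}$, which is precisely the hypothesis of Theorem~\ref{thm1}. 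Hence Theorem~\ref{thm2}, applied in this degenerate configuration, produces exactly the nonexistence conclusion of Theorem~\ref{thm1}.

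There is no genuine obstacle here: the only points meriting a moment's care are that the positivity of $p+1$ keeps the inequality direction correct upon division, and that both branches of the hypothesis of Theorem~\ref{thm2} truly coincide after the substitutions (they do, since $\theta_{1}=\theta_{2}$ and $p=q$), so one need not argue which branch to invoke.
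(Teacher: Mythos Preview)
Your proposal is correct and follows essentially the same approach as the paper: set $\beta_{1}=\beta_{2}=\beta$ so that $\theta_{1}=\theta_{2}=\widetilde{\theta}=\theta=\frac{4}{2+\beta}$, reduce the system condition to $p^{2}\leq 1+\frac{\theta(p+1)}{Q-\theta/2}$, factor $p^{2}-1=(p-1)(p+1)$, divide by $p+1$, and substitute for $\theta$ to recover $p\leq 1+\frac{4}{(2+\beta)Q-2}=p_{c}$. The paper performs the identical algebra, only slightly more tersely by substituting $\theta=\frac{4}{2+\beta}$ from the outset.
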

\begin{proof}
From Theorem \ref{thm2}, we get
$$p^{2}\leq1+\frac{\frac{4}{2+\beta}(p+1)}{Q-\frac{2}{2+\beta}},$$
which means
$$p^{2}-1\leq\frac{4(p+1)}{(2+\beta)Q-2}.$$ Thus, by dividing both sides by $p+1$, we obtain
\begin{equation}
p-1\leq \frac{4}{(2+\beta)Q-2},
\end{equation}
finishing the proof.
\end{proof}

\subsection{Pseudo-hyperbolic equations and systems}

In this subsection we show a nonexistence result for the Kirchhoff type pseudo-hyperbolic equation on Heisenberg groups in the following form:
\begin{equation}\label{pseudowave}
\begin{cases}
 u_{tt} -M\left(t,\int_{\H}|\na u|^{2}dx,\int_{\H}|\L u|^{2}dx\right) (\L u+\L u_{tt})= |u|^{p},   \,\, (x, t) \in
 \Q,\\
u(x, 0) = u_0(x), u_t(x, 0) = u_1(x), \,\,\,x\in\H,
\end{cases}
\end{equation}
where $p > 1$, $\Q= \H \times (0, T)$ and $M:\mathbb{R}_{+}\times\mathbb{R}_{+}\times\mathbb{R}_{+}\rightarrow\mathbb{R}$ is a bounded function such that
\begin{equation}\label{pseudoM}
0<M(\cdot,\cdot,\cdot)\leq C_{0}.
\end{equation}
Let us formulate a definition of the weak solution of the equation \eqref{pseudowave}.
\begin{defn}
\label{pseudodefnweak}
We say that $u$ is a weak solution to \eqref{pseudowave} on $\H\times(0,T)$ with $u_{0}, u_{1} \in L^1_{loc}(\H), $ if $ u\in C^{2}(0,T;S^{2}_{2}(\H))\cap L^p(\Q) $ and satisfies\\
\begin{equation}
\begin{split}
&\int_{\Q}|u|^p\varphi dx dt + \int_{\H} u_1(x)(\varphi(x,0)-\L \varphi(x,0))dx
\\&-  \int_{\H} u_{0}(x)(\varphi_{t}(x,0)-\L \varphi_{t}(x,0))dx
=\int_{\Q} u \varphi_{tt} dx dt  \\&
- \int_{\Q} M\left(t,\int_{\H}|\na u|^{2}dx,\int_{\H}|\L u|^{2}dx\right)u(\L \varphi +\L \varphi_{tt}) dx dt,
\end{split}
\end{equation}
for any test function  $0\leq\varphi\in C^{2,2}_{x,t}(\Q)$.
\end{defn}

\begin{thm}
\label{pseudothm1}
Assume that $u_1, u_{0} \in L^1(\H)$, and
\begin{equation}
\int_{\H} u_1  dx\geq0.
\end{equation}
Suppose that the condition \eqref{pseudoM} is true. If
\begin{equation}
1<p\leq p_{c}= \frac{Q+1}{Q-1},
\end{equation}
with $Q$ the homogeneous dimension of $\H$, then  there exists no nontrivial weak solution to \eqref{pseudowave}.
\end{thm}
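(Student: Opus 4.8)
The plan is to run the test-function (nonlinear capacity) method exactly as in the proofs of Theorems \ref{thm1} and \ref{pseudoheatthm}; the only genuinely new features are the third-order term $\L u_{tt}$ and the two extra boundary terms $\int_{\H}u_{1}\L\varphi(x,0)\,dx$ and $\int_{\H}u_{0}\L\varphi_{t}(x,0)\,dx$ in Definition \ref{pseudodefnweak}. I would use the product cutoff of \eqref{TestFunctionR} with the \emph{specific} parameter $\theta=2$, namely
\[
\varphi_{R}(x,t)=\Phi\!\left(\frac{|\xi|^{4}+|\tilde{\xi}|^{4}+\tau^{2}}{R^{4}}\right)\Phi\!\left(\frac{t^{2}}{R^{2}}\right),
\]
with $\Phi$ the usual cutoff equal to $1$ on $[0,1]$, non-increasing on $[1,2]$, and vanishing for $r>2$. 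By the computation behind \eqref{pott}--\eqref{vaprhi0} one has $\varphi_{R,t}(x,0)=0$, hence $\L\varphi_{R,t}(x,0)=0$, so both $u_{0}$-boundary terms drop out (no sign hypothesis on $u_{0}$ is needed), and $\mathrm{supp}(\varphi_{R})\cup\mathrm{supp}(\L\varphi_{R})\subset\Omega_{1}=\Gamma\cup\tilde{\Gamma}$ as in \eqref{wavedomain} with $\theta=2$.

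Inserting $\varphi_{R}$ into the weak identity, using $0<M\le C_{0}$ from \eqref{pseudoM} and the triangle inequality, and then $\varepsilon$-Young's inequality (in the form $|u|\,|D\varphi_{R}|=(|u|\varphi_{R}^{1/p})(\varphi_{R}^{-1/p}|D\varphi_{R}|)$) to absorb the $\varepsilon\int|u|^{p}\varphi_{R}$ contributions into the left-hand side, one should obtain
\[
\int_{\Q}|u|^{p}\varphi_{R}\,dxdt+\int_{\H}u_{1}\varphi_{R}(x,0)\,dx\le C\big(A_{p}(\varphi_{R})+B_{p}(\varphi_{R})\big)+\Big|\int_{\H}u_{1}\L\varphi_{R}(x,0)\,dx\Big|,
\]
where $A_{p}(\varphi_{R})=\int_{\Omega_{1}}\varphi_{R}^{-\frac{1}{p-1}}\big(|\varphi_{R,tt}|^{\frac{p}{p-1}}+|\L\varphi_{R}|^{\frac{p}{p-1}}\big)\,dxdt$ collects the wave and sub-Laplacian parts and $B_{p}(\varphi_{R})=\int_{\Omega_{1}}\varphi_{R}^{-\frac{1}{p-1}}|\L\varphi_{R,tt}|^{\frac{p}{p-1}}\,dxdt$ the new pseudo-hyperbolic part, both finite for a suitable $\Phi$.

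The key quantitative step is the scaling count. From \eqref{poxx} with $\theta=2$ one has $|\L\varphi_{R}|\le CR^{-2}$, and directly $|\varphi_{R,tt}|\le CR^{-2}$, $|\L\varphi_{R,tt}|\le CR^{-4}$; the rescaling $R\overline{t}=t$, $R\overline{\xi}=\xi$, $R\widehat{\xi}=\tilde{\xi}$, $R^{2}\tilde{\tau}=\tau$ on $\Omega_{1}$ has Jacobian of order $R^{Q+1}$. Hence $A_{p}(\varphi_{R})\le CR^{Q+1-\frac{2p}{p-1}}$ and $B_{p}(\varphi_{R})\le CR^{Q+1-\frac{4p}{p-1}}$ (strictly subdominant), while the boundary term is $\le CR^{-2}\|u_{1}\|_{L^{1}(\H)}$; the choice $\theta=2$ is exactly the one balancing the $\varphi_{R,tt}$ and $\L\varphi_{R}$ rates. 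This yields
\[
\int_{\Q}|u|^{p}\varphi_{R}\,dxdt+\int_{\H}u_{1}\varphi_{R}(x,0)\,dx\le CR^{Q+1-\frac{2p}{p-1}}+CR^{-2},
\]
and one checks that $Q+1-\frac{2p}{p-1}<0$ is equivalent to $p<\frac{Q+1}{Q-1}=p_{c}$. For $1<p<p_{c}$, letting $R\to\infty$ and combining monotone convergence with dominated convergence (legitimate since $u_{1}\in L^{1}(\H)$ and $\int_{\H}u_{1}\,dx\ge0$) gives $\int_{\Q}|u|^{p}\,dxdt+\int_{\H}u_{1}\,dx\le0$, forcing $u\equiv0$, a contradiction. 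In the critical case $p=p_{c}$ the exponent is $0$, so $|u|^{p}\in L^{1}(\Q)$ and $\int_{\overline{\Omega}_{R}}|u|^{p}\varphi_{R}\,dxdt\to0$ over the shell $\overline{\Omega}_{R}$ on which $\varphi_{R,tt},\L\varphi_{R},\L\varphi_{R,tt}$ are supported; replacing Young's by H\"{o}lder's inequality localizes the right-hand side to $\overline{\Omega}_{R}$ and produces the factor $R^{0}$, leading to $\int_{\Q}|u|^{p}\varphi_{R}\,dxdt\le C\big(\int_{\overline{\Omega}_{R}}|u|^{p}\varphi_{R}\,dxdt\big)^{1/p}+CR^{-2}\to0$, hence $u\equiv0$ again.

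I do not expect a real obstacle: the pseudo term $\L u_{tt}$ is subcritical in the scaling — it only enters through $B_{p}\sim R^{Q+1-4p/(p-1)}$, well below the dominant $R^{Q+1-2p/(p-1)}$ — so it leaves $p_{c}$ unchanged; consistently, $\frac{Q+1}{Q-1}$ is already the Kato-type exponent of the pure wave case ($\beta=0$) in Theorem \ref{thm1}. The only points requiring attention are: that the correct time scaling is $t\sim R$, which is what produces the Jacobian exponent $Q+1$ and hence the exact value $\frac{Q+1}{Q-1}$; the vanishing of $\varphi_{R,t}(x,0)$ and $\L\varphi_{R,t}(x,0)$, which kills the $u_{0}$ terms; and the usual additional bookkeeping in the critical case, handled just as in Theorem \ref{thm1}.
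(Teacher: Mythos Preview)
Your proposal is correct and follows essentially the same route as the paper's proof: the same product cutoff with $\theta=2$, the observation $\varphi_{R,t}(x,0)=0$ to kill the $u_{0}$ terms, $\varepsilon$-Young absorption, the scaling estimates $A_{p}\le CR^{Q+1-\frac{2p}{p-1}}$, $B_{p}\le CR^{Q+1-\frac{4p}{p-1}}$, the boundary term $\le CR^{-2}\|u_{1}\|_{L^{1}}$, and the H\"older refinement on the shell for $p=p_{c}$. The only cosmetic difference is that the paper defines $A_{p}$ with the $|(\varphi_{R})_{tt}|$ piece alone and silently absorbs the $|\L\varphi_{R}|$ contribution (same order $R^{-2}$), whereas you group both into $A_{p}$; this changes nothing.
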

\begin{proof}
Consider the case $1<p<p_{c}$. 
Let us choose a test function in the following form
\begin{equation}
\label{TestFunctionR1}
\varphi_R (x, t) = \Phi \left(\frac{|\xi|^{4}+|\tilde{\xi}|^{4}+\tau^{2}}{R^{4}}\right)\Phi \left(\frac{t^{2}}{R^{2}}\right), \,\,(x,t)\in\H\times\mathbb{R}_{+},\,\,\,\,R>0,
\end{equation}
 with $\Phi\in C^{\infty}_{0}(\mathbb{R}_{+})$ as
$$
\Phi(r) =
\begin{cases}
1, & \text{if $ 0\leq r \leq 1 $,} \\
\searrow, & \text{if $1< r\leq 2$,}\\
0, & \text{if $r>  2$.}
\end{cases}
$$

By noting that
\begin{equation}\label{pott1}
\frac{\partial\varphi_{R}(x,t)}{\partial t}=\frac{2 t}{R^{2}}\Phi \left(\frac{|\xi|^{4}+|\tilde{\xi}|^{4}+\tau^{2}}{R^{4}}\right)\Phi_{t}'\left(\frac{t^{2}}{R^{2}}\right),
\end{equation}
one has
\begin{equation}\label{vaprhi01}
\frac{\partial\varphi_{R}(x,0)}{\partial t}=0.
\end{equation}
By Definition \ref{pseudodefnweak}, we obtain
\begin{equation}
\label{nacho123}
\begin{split}
&\int_\Omega |u|^p\varphi_{R} dx dt + \int_{\H} u_1(x)\varphi_{R}(x,0)dx\stackrel{\eqref{vaprhi01}}=
\int_\Omega |u|^p\varphi_{R} dx dt \\&+ \int_{\H} u_1(x)\varphi_{R}(x,0)dx
-  \int_{\H} u_{0}(x)((\varphi_{R})_{t}(x,0)-\L (\varphi_{R})_{t}(x,0))dx\\&
=\int_\Omega u (\varphi_{R})_{tt} dx dt  - \int_{\Q} M\left(t,\int_{\H}|\na u|^{2}dx,\int_{\H}|\L u|^{2}dx\right)u(\L \varphi +\L \varphi_{tt}) dx dt\\&
+\int_{\H} u_1(x)\L \varphi_{R}(x,0)dx\\&
\leq\big{|}\int_\Omega u (\varphi_{R})_{tt} dx dt  - \int_{\Q} M\left(t,\int_{\H}|\na u|^{2}dx,\int_{\H}|\L u|^{2}dx\right)u(\L \varphi +\L \varphi_{tt}) dx dt\\&
+\int_{\H} u_1(x)\L \varphi_{R}(x,0)dx\big{|}\\&
\leq \int_\Omega |u (\varphi_{R})_{tt}| dx dt  + C_{0}\int_\Omega|u\L (\varphi_{R})_{tt}| dx dt + C_{0}\int_\Omega |u \L \varphi_{R}| dx dt\\&
+\int_{\H} |u_1(x)\L \varphi_{R}(x,0)|dx.
\end{split}
\end{equation}

Then by  using the Young inequality, we get
\begin{equation}
\begin{split}
\int_{\Omega_{1}} |u||(\varphi_{R})_{tt}| dxdt&
=\int_{\Omega_{1}}|u|\varphi_{R}^{\frac{1}{p}}\varphi_{R}^{-\frac{1}{p}}|(\varphi_{R})_{tt}|dxdt
\\&
\leq \varepsilon \int_{\Omega_{1}}|u|^p \varphi_{R} dxdt +C(\varepsilon) \int_{\Omega_{1}}\varphi_{R}^{-\frac{1}{p-1}} |(\varphi_{R})_{tt}|^\frac{p}{p-1} dxdt,
\end{split}
\end{equation}
\begin{equation}
\begin{gathered}
\int_{\Omega_{1}}u |\L \varphi_{R}| dt dx \leq \varepsilon \int_{\Omega_{1}}|u|^p \varphi_{R} dt dx+ C(\varepsilon) \int_{\Omega_{1}}\varphi_{R}^{-\frac{1}{p-1}} |\L \varphi_{R}|^{\frac{p}{p-1}} dxdt,
\end{gathered}
\end{equation}
and
\begin{equation}
\begin{gathered}
\int_{\Omega_{1}}u |\L (\varphi_{R})_{tt}| dt dx \leq \varepsilon \int_{\Omega_{1}}|u|^p \varphi_{R} dt dx+ C(\varepsilon) \int_{\Omega_{1}}\varphi_{R}^{-\frac{1}{p-1}} |\L (\varphi_{R})_{tt}|^{\frac{p}{p-1}} dxdt,
\end{gathered}
\end{equation}
for some positive constant $C(\varepsilon)$.
By using the above facts, we obtain
\begin{equation}
\label{pseudoobs}
\begin{split}
\int_\Omega |u|^p \varphi_{R} dxdt &+ C\int_{\H} u_1(x)\varphi_{R}(x,0) dx
\\&\leq C \left(A_p(\varphi_{R}) + B_p(\varphi_{R}) + \int_{\H} |u_1(x) | \L \varphi_{R} (x,0)| dx\right),
\end{split}
\end{equation}
where $C>0$,
\begin{equation}
A_p(\varphi_{R}) = \int_{\Omega_{1}}\varphi_{R}^{-\frac{1}{p-1}} |(\varphi_{R})_{tt}|^{\frac{p}{p-1}} dx dt<\infty,
\end{equation}
and
\begin{equation}
B_p(\varphi_{R}) = \int_{\Omega_{1}}\varphi_{R}^{-\frac{1}{p-1}} |\L (\varphi_{R})_{tt}|^{\frac{p}{p-1}} dxdt<\infty.
\end{equation}

From \eqref{vaprhi0}, we have
\begin{equation}
\frac{\partial\varphi_{R}(x,0)}{\partial t}=0.
\end{equation}
Also, let us denote $\overline{\Omega}_R=\Gamma_R\cup\tilde{\Gamma}_R,$
\begin{equation*}
\Gamma_R:=\{x=(\xi,\tilde{\xi},\tau)\in\H: R^{4}\leq|\xi|^{4}+|\tilde{\xi}|^{4}+\tau^{2}\leq2R^{4}\},\,\,\tilde{\Gamma}_R:=\{t:R^{2}\leq t^{2}\leq 2R^{2}\}.
\end{equation*}
Let us estimate $A_{p}(\varphi_{R}), B_{p}(\varphi_{R})$. By using the homogeneous dimension of the Heisenberg groups, we calculate
\begin{equation}\label{pseudoocen1}
|A_{p}(\varphi_{R})|
\leq C R^{Q+1-\frac{2p}{p-1}},
\end{equation}
and
\begin{equation}\label{pseudoocen2}
|B_{p}(\varphi_{R})|\leq C R^{Q+1-\frac{4p}{p-1}}.
\end{equation}
Choose variables $R\overline{t}=t$, $R\overline{\xi}=\xi$, $R\widehat{\xi}=\tilde{\xi}$ and $R^{2}\tilde{\tau}=\tau$ in the domain $\Omega_{1}$,  and by using \eqref{pseudoocen1},\eqref{pseudoocen2} in \eqref{pseudoobs}, one calculates
\begin{equation}
\label{pseudoocenk4}
\begin{split}
&\int_{\Omega} |u|^p \varphi_{R} dxdt + C\int_{\H} u_1(x) \varphi_R(x, 0) dx\\&
\leq C\left(R^{Q+1-\frac{2p}{p-1}}+C\int_{\Gamma} |u_1(x)| |\L \varphi_R(x, 0)|dx\right).
\end{split}
\end{equation}
On the other hand, we have
\begin{multline*}
\int_\Omega |u|^p \varphi_R dxdt + C\int_{\H} u_1(x) \varphi_R(x, 0) dx\\
\geq \liminf_{R \to \infty} \int_\Omega |u|^p \varphi_R dxdt + C\liminf_{R \to \infty} \int_{\H} u_1(x) \varphi_R(x, 0) dx.
\end{multline*}
Using the monotone convergence theorem, we obtain
\[
\liminf_{R \to \infty} \int_\Omega |u|^p \varphi_R dxdt = \int_\Omega |u|^p dxdt.
\]
Since $u_1 \in  L^1(\mathbb{G})$, by the dominated convergence theorem, one has
$$
\liminf_{R \to \infty} \int_{\Gamma} u_1(x) \varphi_R(x, 0) dx=\int_{\H} u_1(x) dx.
$$
Now, we have
\[
\liminf_{R \to \infty} \left(\int_\Omega|u|^p \varphi_R dxdt  +C\int_{\H} u_1(x) \varphi_R(x,0) dx\right) \geq \int_\Omega |u|^p  dxdt +Cd,
\]
where
\[ 
d= \int_{\H} u_1(x) dx \geq0. 
\]
By the definition of the limit, for every $\alpha > 0$ there exists $R_0 > 0$ such that
\begin{align*}
\int_\Omega |u|^p \varphi_R dx dt  &+C\int_{\H} u_1(x) \varphi_R(x,0) dx \\& \geq
\liminf_{R \to \infty} \left(\int_\Omega |u|^p \varphi_R dx dt  +\int_{\H} u_1(x) \varphi_R(x,0) dx\right)-\alpha\\&\geq  \int_\Omega |u|^p  dx dt +Cd -\alpha,
\end{align*}
for every $R \geq R_0$. By choosing $\alpha = \frac{Cd}{2},$ we get
\[
\int_\Omega |u|^p \varphi_R dxdt  +\int_{\H} u_1(x) \varphi_R(x,0) dx\geq  \int_\Omega |u|^p dxdt+\frac{Cd}{2},
\]
for every $R \geq R_0$. Then from $u_{1}\in L^{1}(\H)$, one obtains
\begin{equation}\label{pseudovkon2}
\begin{split}
\int_\Omega |u|^p dxdt+\frac{Cd}{2}&\leq C\left(R^{Q+1-\frac{2p}{p-1}}+\int_{\Gamma} |u_1(x)| |\L \varphi_R(x, 0)|dx\right)
\\&\leq C\left(R^{Q+1-\frac{2p}{p-1}}+R^{-2}\int_{\Gamma} |u_1(x)| dx\right)
\\&\leq C\left(R^{Q+1-\frac{2p}{p-1}}+R^{-2}\right).
\end{split}
\end{equation}
From assumption we have $p<1+\frac{2}{p-1}$, or, in other words $Q+1-\frac{2p}{p-1}<0$ and by using this fact with 
 letting $R\rightarrow\infty,$ we obtain
$$
\int_\Omega |u|^p dxdt\leq \int_\Omega |u|^p dxdt+C\frac{d}{2}\leq0,
$$
that is,
$$
\int_\Omega |u|^p dxdt=0.
$$
It is a contradiction. 

Now, we consider the critical case $p = 1 + \frac{2}{Q-1}$. By using \eqref{pseudovkon2}, we have
\begin{equation}
\int_\Omega |u|^p dxdt \leq C< \infty.
\end{equation}
Then
\begin{equation}
\lim_{R \to \infty} \int_{\overline{\Omega}_R} |u|^p \varphi_R dx dt=0,
\end{equation}
where $\overline{\Omega}_R=\Gamma_{1,R}\cup\tilde{\Gamma}_{1,R},$
\begin{equation*}
\Gamma_{1,R}:=\{x=(\xi,\tilde{\xi},\tau)\in\H: R^{4}\leq|x|^{4}\leq2R^{4}\},\,\,\tilde{\Gamma}_{1,R}:=\{t:R^{2}\leq t^{2}\leq 2R^{2}\}.
\end{equation*}
Using the H\"{o}lder inequality instead of the Young's inequality in \eqref{nacho123}, one obtains
\[
\int_\Omega |u|^p \varphi_R dxdt\leq \int_\Omega |u|^p \varphi_R dxdt + C\frac{d}{2}  \leq C\left(\int_{\overline{\Omega}_R}  |u|^p \varphi_R dxdt\right)^{\frac{1}{p}}.
\]
Letting $R \to \infty$ in the above, we get
\[
\int_\Omega |u|^p \varphi_R dxdt =0,
\]
arriving at a contradiction.
\end{proof}

Let us consider the Cauchy problem for the system of Kirchhoff type pseudo-hyperbolic equations
\begin{equation}
\label{pseudowavesys}
\begin{split}\begin{cases}
u_{tt} - M_{1}\left(t,\int\limits_{\H}|\na u|^{2}dx,\int\limits_{\H}|\na v|^{2}dx,\int\limits_{\H}|\L u|^{2}dx, \int\limits_{\H}|\L v|^{2}dx\right)(\L u+\L u_{tt})\\{\,\,\,\,\,} = |v|^{q},   \,\, (x, t) \in \Q,\\
v_{tt} - M_{2}\left(t,\int\limits_{\H}|\na u|^{2}dx,\int\limits_{\H}|\na v|^{2}dx,\int\limits_{\H}|\L u|^{2}dx, \int\limits_{\H}|\L v|^{2}dx\right)(\L v+\L v_{tt})\\{\,\,\,\,\,} = |u|^{p},   \,\, (x, t) \in \Q,\\
u(x, 0) = u_0(x), u_t(x, 0) = u_1(x), \,\,\,x\in\H,\\
v(x,0)=v_{0}(x), v_{t}(x,0)=v_{1}(x), \,\,\,x\in\H,
\end{cases}\end{split}
\end{equation}
where $\Q=\H \times (0, T),$ $T>0,$ $p,q>1$, and $M_{1},M_{2}:\mathbb{R}_{+}\times\mathbb{R}_{+}\times\mathbb{R}_{+}\rightarrow\mathbb{R}$ are bounded functions such that
\begin{equation}\label{pseudosysus1}
0<M_{1}(\cdot,\cdot,\cdot,\cdot,\cdot)\leq C_{1},
\end{equation}
and
\begin{equation}\label{pseudosysus2}
0<M_{2}(\cdot,\cdot,\cdot,\cdot,\cdot)\leq C_{2}.
\end{equation}

Now, we formulate the following definition of the weak solution to the system \eqref{pseudowavesys}.
\begin{defn}
We say that the pair $(u, v)$ is a weak solution to \eqref{pseudowavesys} on $\Q=\H\times(0,T)$ with the initial data $(u(x, 0), v(x,0)) = (u_0, v_0) \in  L^1_{loc}(\H) \times L^1_{loc}(\H)$, if $(u, v) \in  C^{2}(0,T;S^{2}_{2}(\H))\cap L^p(\Q) \times C^{2}(0,T;S^{2}_{2}(\H))\cap L^q(\Q)$ with $p,q>1$ and they satisfy
\begin{align*}
&\int_{\Q} |v|^q \varphi dxdt +\int_{\H} u_1(x)(\varphi(x,0)-\L\varphi(x,0)) dx\\&-\int_{\H} u_0(x)( \varphi_{t}(x,0)-\L\varphi_{t}(x,0)) dx
=\int_{\Q} u \varphi_{tt} dxdt \\&
- \int_{\Q} M_{1}\left(t,\int\limits_{\H}|\na u|^{2}dx,\int\limits_{\H}|\na v|^{2}dx,\int\limits_{\H}|\L u|^{2}dx, \int\limits_{\H}|\L v|^{2}dx\right)u(\L\varphi+ \L\varphi_{tt} )dxdt,
\end{align*}
and
\begin{align*}
&\int_{\Q} |u|^p \psi dxdt +\int_{\H} v_1(x)( \psi(x,0)-\L\psi(x,0)) dx\\&-\int_{\H} v_0(x)( \psi_{t}(x,0)-\L\psi_{t}(x,0)) dx
=\int_{\Q} v \psi_{tt} dxdt \\&
-\int_{\Q}M_{2}\left(t,\int\limits_{\H}|\na u|^{2}dx,\int\limits_{\H}|\na v|^{2}dx,\int\limits_{\H}|\L u|^{2}dx, \int\limits_{\H}|\L v|^{2}dx\right) v (\L\psi+\L\psi_{tt}) dxdt,
\end{align*}
for any test functions $0\leq\varphi,\psi\in C^{2,2}_{x,t}(\Omega)$.
\end{defn}

Then we have the following theorem.
\begin{thm}
\label{pseudothm2}
Let $p,q>1,$ $(u_0, v_0),(u_1, v_1) \in L^1(\H) \times L^1(\H)$ with
\begin{equation}
\int_{\H} u_1 dx\geq0,\,\,\,\text{and}\,\,\,\int_{\H} v_1 dx\geq0.
\end{equation}
Assume that the conditions \eqref{pseudosysus1} and \eqref{pseudosysus2} hold.
If
\[
1 < pq \leq 1+ \frac 2{Q-1} \max\{p + 1; q + 1\},
\]
with homogeneous dimension $Q=2n+2$ of $\H$, then there exists no nontrivial  weak solution to \eqref{pseudowavesys}.
\end{thm}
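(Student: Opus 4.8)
The plan is to combine the test-function scheme of Theorem~\ref{pseudothm1} (single pseudo-hyperbolic equation) with the coupling device of Theorem~\ref{thm2} (hyperbolic system). First I would fix the two test functions
\[
\varphi_R(x,t)=\Phi\!\left(\frac{|\xi|^{4}+|\tilde\xi|^{4}+\tau^{2}}{R^{4}}\right)\Phi\!\left(\frac{t^{2}}{R^{2}}\right),\qquad
\psi_R(x,t)=\Psi\!\left(\frac{|\xi|^{4}+|\tilde\xi|^{4}+\tau^{2}}{R^{4}}\right)\Psi\!\left(\frac{t^{2}}{R^{2}}\right),
\]
with $\Phi,\Psi\in C^{\infty}_{0}(\mathbb{R}_{+})$ the standard cut-offs of \eqref{testell}; just as in \eqref{vaprhi01}, one has $(\varphi_R)_t(\cdot,0)\equiv(\psi_R)_t(\cdot,0)\equiv 0$, hence also $\L(\varphi_R)_t(\cdot,0)\equiv\L(\psi_R)_t(\cdot,0)\equiv 0$, so all $u_0,v_0$ contributions disappear from the weak identities. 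Inserting $\varphi_R,\psi_R$, estimating $M_1,M_2$ by $C_1,C_2$ through \eqref{pseudosysus1}--\eqref{pseudosysus2}, and using the triangle inequality exactly as in \eqref{nacho123}, I would reach
\[
\int_{\Q}|v|^{q}\varphi_R\,dxdt+\int_{\H}u_1\varphi_R(\cdot,0)\,dx\le \int_{\H}|u_1|\,|\L\varphi_R(\cdot,0)|\,dx+C\!\int_{\Q}|u|\bigl(|(\varphi_R)_{tt}|+|\L\varphi_R|+|\L(\varphi_R)_{tt}|\bigr)dxdt,
\]
together with the mirror inequality obtained by swapping $u\leftrightarrow v$, $p\leftrightarrow q$ and $\varphi_R\leftrightarrow\psi_R$.

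Next, instead of Young's inequality I would apply H\"older's inequality in the coupled form $|u|=\bigl(|u|\psi_R^{1/p}\bigr)\,\psi_R^{-1/p}$ (and symmetrically $|v|=\bigl(|v|\varphi_R^{1/q}\bigr)\,\varphi_R^{-1/q}$), which turns the two estimates into
\[
\int_{\Q}|v|^{q}\varphi_R\,dxdt\le C\bigl(A_p(\varphi_R)^{\frac{p-1}{p}}+B_p(\varphi_R)^{\frac{p-1}{p}}\bigr)\Bigl(\int_{\Q}|u|^{p}\psi_R\,dxdt\Bigr)^{\frac1p}+\int_{\H}|u_1|\,|\L\varphi_R(\cdot,0)|\,dx
\]
and its analogue with $A_q(\psi_R),B_q(\psi_R)$, where $A_p$ collects the contributions of $(\varphi_R)_{tt}$ and $\L\varphi_R$ (both of size $R^{-2}$) and $B_p$ the genuinely pseudo-hyperbolic term $\L(\varphi_R)_{tt}$ (of size $R^{-4}$), exactly as in the proof of Theorem~\ref{pseudothm1}. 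Using $|\L\varphi_R|\le CR^{-2}$, $|(\varphi_R)_{tt}|\le CR^{-2}$, $|\L(\varphi_R)_{tt}|\le CR^{-4}$ (and likewise for $\psi_R$) together with the rescaling $R\overline t=t$, $R\overline\xi=\xi$, $R\widehat\xi=\tilde\xi$, $R^{2}\tilde\tau=\tau$, whose Jacobian on $\Omega_1$ equals $R^{Q+1}$, one gets $A_p(\varphi_R)\le CR^{Q+1-\frac{2p}{p-1}}$ and $B_p(\varphi_R)\le CR^{Q+1-\frac{4p}{p-1}}$, so that $A_p$ is dominant, and analogously for $\psi_R$; the boundary term is harmless because $\int_{\H}|u_1|\,|\L\varphi_R(\cdot,0)|\,dx\le CR^{-2}\|u_1\|_{L^1(\H)}\to 0$.

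For the passage $R\to\infty$ I would, as in the proofs of Theorems~\ref{thm1} and \ref{pseudothm1}, use the monotone convergence theorem for $\int_{\Q}|u|^{p}\varphi_R$ and $\int_{\Q}|v|^{q}\psi_R$, and the dominated convergence theorem (thanks to $u_1,v_1\in L^1(\H)$) for $\int_{\H}u_1\varphi_R(\cdot,0)\,dx\to\int_{\H}u_1\,dx\ge0$ and $\int_{\H}v_1\psi_R(\cdot,0)\,dx\to\int_{\H}v_1\,dx\ge0$, so the surviving initial-data terms are nonnegative in the limit and may be dropped. Combining the two coupled inequalities and eliminating one of the integrals gives
\[
\Bigl(\int_{\Q}|u|^{p}\psi_R\,dxdt\Bigr)^{1-\frac1{pq}}\le CR^{\alpha_1},\qquad
\Bigl(\int_{\Q}|v|^{q}\varphi_R\,dxdt\Bigr)^{1-\frac1{pq}}\le CR^{\alpha_2},
\]
with exponents $\alpha_1,\alpha_2$ computed exactly like the $\alpha_1,\alpha_2$ of Theorem~\ref{thm2} specialised to $\theta_1=\theta_2=2$; a direct calculation shows that $\alpha_1<0$ or $\alpha_2<0$ precisely when $pq<1+\frac{2}{Q-1}\max\{p+1,q+1\}$, and then letting $R\to\infty$ forces $\int_{\Q}|u|^{p}\,dxdt=0$ or $\int_{\Q}|v|^{q}\,dxdt=0$, whereupon the remaining one also vanishes through the coupling. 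The borderline case $pq=1+\frac{2}{Q-1}\max\{p+1,q+1\}$ is handled as the critical case of Theorem~\ref{pseudothm1}: the relevant exponent then equals $0$, so one first obtains $\int_{\Q}|u|^{p}\,dxdt<\infty$ (respectively $\int_{\Q}|v|^{q}\,dxdt<\infty$), whence $\int_{\overline{\Omega}_R}|u|^{p}\varphi_R\,dxdt\to0$ on the dyadic annulus $\overline{\Omega}_R$, and replacing Young's inequality by H\"older's in the coupled estimate closes the argument.

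The step I expect to be the genuine obstacle is the exponent bookkeeping in the coupled H\"older chain: one must check that, among the three lower-order contributions $(\varphi_R)_{tt}$, $\L\varphi_R$ and the pseudo-hyperbolic term $\L(\varphi_R)_{tt}$, the last (decaying like $R^{-4}$) is subdominant, so that the blow-up threshold is driven only by $A_p$ and $A_q$ and reduces exactly to the stated $pq$-condition; once this, and the elementary algebra linking $\alpha_1,\alpha_2$ to $\max\{p+1,q+1\}$, are settled, the remainder is a faithful transcription of the proofs of Theorems~\ref{thm1}, \ref{thm2} and \ref{pseudothm1}.
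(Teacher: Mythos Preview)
Your proposal is correct and follows essentially the same route as the paper. The paper's own proof is extremely terse: it simply says to repeat the proof of Theorem~\ref{pseudothm1}, noting only that for large $R$ one has $\int_{\H}u_{1}\varphi_{R}(\cdot,0)\,dx-\int_{\H}|u_{1}|\,|\L\varphi_{R}(\cdot,0)|\,dx\geq0$ (and similarly for $v_1$), which is exactly your observation that the $|\L\varphi_R(\cdot,0)|\le CR^{-2}$ boundary contribution vanishes while the remaining initial term is asymptotically nonnegative; your write-up in fact supplies considerably more detail than the paper does, including the explicit identification of $\alpha_1,\alpha_2$ as the $\theta_1=\theta_2=2$ specialisation of the exponents in Theorem~\ref{thm2}.
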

\begin{proof}The proof of this theorem is similar to the proof of Theorem \ref{pseudothm1}. The only thing that we need to note is that for the large $R$, we have
\begin{equation}
\begin{split}
&\int_{\H}u_{1}(x)\varphi_{R}(x,0)dx-\int_{\H}|u_{1}(x)|\L \varphi_{R}(x,0)|dx\geq0,
\\&\int_{\H}v_{1}(x)\psi_{R}(x,0)dx-\int_{\H}|v_{1}(x)|\L \psi_{R}(x,0)|dx\geq0.
\end{split}
\end{equation}
And, by repeating the proof of Theorem \ref{pseudothm1}, we finish our proof.
\end{proof}

\end{document}